\numberwithin{equation}{section}
\newcounter{AbcT}
\newtheorem {Theorem}    {Theorem}[section]
\newtheorem {Question}    [Theorem]{Question}
\newtheorem {Lemma}      [Theorem]    {Lemma}
\newtheorem {Corollary}  [Theorem]    {Corollary}
\newtheorem {Proposition}[Theorem]    {Proposition}
\newtheorem {Claim}      [Theorem]    {Claim}
\newcounter{DM@bibnum}
\newcommand {\F} {{\mathbb F}}
\newcommand{\la}{\langle}
\newcommand{\ra}{\rangle}
\def\KMS{{\rm KMS}}
\def\log{{\rm log\,}}
\def\Ker{{\rm Ker\,}}
\def\Im{{\rm Im\,}}
\def\LT{{\rm LT\,}}
\def\NSL_2{{\mathcal N SL_2}}
\def\Gp{{\rm Gr}}
\def\depth{{\rm depth}}
\def\eps{\varepsilon}
\def\lam{\lambda}            
\def\phi{\varphi}
\def\calB{{\mathcal B}}
\def\calU{{\mathcal U}}
\def\bgal{\widetilde b}
             \def\Hgal{{\widetilde H}}
             \def\Igal{{\widetilde I}}
\def\mgal{\tilde m}
\def\rgal{\widetilde r}             \def\Rgal{{\widetilde R}}
\def\sgal{\tilde s}             \def\Sgal{{\widetilde S}}
             \def\Ugal{{\widetilde U}}
\def\wgal{\widetilde w}             \def\Wgal{{\widetilde W}}
\def\xgal{\tilde x}             \def\Xgal{{\widetilde X}}
\def\fbar{\bar f}               \def\Fgag{\overline{\strut F}}
\def\hbar{\bar h}
               \def\Wgag{\overline{\strut W}}
               \def\Ghat{\widehat {\mathstrut G}}
\def\phat{\widehat p}
\def\dbF{{\mathbb F}}
\def\dbN{{\mathbb N}}
\def\dbR{{\mathbb R}}
\def\dbZ{{\mathbb Z}}
\def\Fp{{\dbF_p}}
\def\Fq{{\dbF_q}}
\newcommand{\lla}{\la\!\la}
\newcommand{\rra}{\ra\!\ra}
\begin{document}

\title[Kazhdan quotients of Golod-Shafarevich groups]
{Kazhdan quotients of Golod-Shafarevich groups}
\author{Mikhail Ershov}
\classno{Primary 20F05, 20F69, Secondary 20E18, 20F65, 22D10, 43A07}
\maketitle

\vskip -.4cm
\centerline{ with an appendix on }
\vskip .1cm
\centerline{\bf Uniform non-amenability of Golod-Shafarevich groups}
\vskip .2cm
\centerline{by {\sc Mikhail Ershov} and {\sc Andrei Jaikin-Zapirain}}
\vskip .3cm
\centerline{ and an appendix on }
\vskip .1cm
\centerline{\bf Subgroup growth of Golod-Shafarevich groups}
\vskip .2cm
\centerline{by \sc Andrei Jaikin-Zapirain}
\vskip .3cm

\begin{abstract}
The main goal of this paper is to prove that every Golod-Shafarevich group
has an infinite quotient with Kazhdan's property~$(T)$.
In particular, this gives an affirmative answer to the well-known
question about non-amenability of Golod-Shafarevich groups.
\end{abstract}

\section{Introduction}
\subsection{Golod-Shafarevich groups}
In early 60's, Golod and Shafarevich~\cite{GSh} found a sufficient
condition for a group given by generators and relators
to be infinite. The original condition from \cite{GSh} admits
several generalizations, but the following one
is usually taken as the definition of Golod-Shafarevich groups.
\begin{Definition}\rm Fix a prime number $p$.
\begin{enumerate}
\item[(a)] Consider a group presentation $\la X|R\ra$, where $X$ is finite.
For each $i\in\dbN$ let $r_i$ be the number of elements of $R$
which have degree $i$ with respect to the Zassenhaus $p$-filtration.
The presentation $\la X|R\ra$ is said to satisfy the 
\textit{Golod-Shafarevich} (GS)
condition with respect to $p$, if there exists a real number $t\in (0,1)$ such that
$$1-H_X(t)+H_R(t)<0 \mbox { where } H_X(t)=|X|t\mbox{ and }H_R(t)=\sum_{i=1}^{\infty}r_i t^i.$$

\item[(b)] A group $\Gamma$ is called \textit{Golod-Shafarevich} if it
has a presentation satisfying the Golod-Shafarevich condition.
\end{enumerate}
\end{Definition}
\begin{Remark} The reference to prime $p$ will usually be omitted, as we will
never consider GS groups with respect to different primes at the same time.
\end{Remark}
\vskip .3cm
The Golod-Shafarevich condition can also be defined for pro-$p$ groups
(using essentially the same definition) and certain kinds of associative
algebras (with suitable notion of degree) -- see Section~2 for details.
In fact, to prove that any (abstract)\footnote{We shall often refer
to ordinary groups as `abstract groups' to distinguish them from pro-$p$ groups}
Golod-Shafarevich group is infinite, one reduces the problem to the corresponding question for algebras
defined as quotients of $K\lla u_1,\ldots, u_n\rra$, where $K\lla u_1,\ldots, u_n\rra$
is the algebra of non-commutative power series in $u_1,\ldots,u_n$ over a field $K$.
More specifically, given a finitely generated group $\Gamma$, one considers the pro-$p$
completion $\Gamma_{\phat}$ of $\Gamma$
and its completed group algebra $\Fp[[\Gamma_{\phat}]]$. Every presentation
$\la X|R \ra$ for $\Gamma$ yields the corresponding presentation for $\Fp[[\Gamma_{\phat}]]$
as a quotient of $\Fp\lla u_1,\ldots, u_n\rra$ where $n=|X|$, and
a clever dimension-counting argument shows that $\Fp[[\Gamma_{\phat}]]$
is infinite provided $\la X|R \ra$ satisfies the GS condition. If the algebra $\Fp[[\Gamma_{\phat}]]$
is infinite, the groups $\Gamma_{\phat}$ and $\Gamma$ must also be infinite.
\vskip .12cm

Besides being infinite, Golod-Shafarevich (GS) groups are known to be ``large''
in various senses: for instance, any Golod-Shafarevich group $\Gamma$
has an infinite torsion quotient (see \cite{Go} and \cite{Wi}), the pro-$p$ completion of $\Gamma$ contains a non-abelian free pro-$p$ group (see \cite{Ze}), and if $\{\omega_n \Gamma\}$ is the
Zassenhaus $p$-series of $\Gamma$, the sequence $\{\log_p |\Gamma/\omega_n \Gamma|\}_{n=1}^{\infty}$
has exponential growth (see discussion in Section~2).

\subsection{The main theorem and a naive approach to it}
In \cite{Er}, the author constructed the first examples of Golod-Shafarevich
groups with Kazhdan's property $(T)$. Following this discovery, Lubotzky proposed a
related conjecture that every Golod-Shafarevich group should have an infinite quotient with
property $(T)$.
The main goal of this paper is to prove Lubotzky's conjecture:
\begin{Theorem}
\label{thm:main0}
Every Golod-Shafarevich group has an infinite quotient with Kazhdan's property $(T)$.
\end{Theorem}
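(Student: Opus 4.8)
The plan is to work with the algebraic incarnation of the Golod--Shafarevich condition and to perform a controlled quotient of the completed group algebra. Given a Golod--Shafarevich group $\Gamma$ with presentation $\la X|R\ra$ satisfying $1-H_X(t)+H_R(t)<0$ for some $t\in(0,1)$, pass to the pro-$p$ completion $\Gamma_{\phat}$ and its completed group algebra $A=\Fp[[\Gamma_{\phat}]]$, presented as a quotient of $\Fp\lla u_1,\ldots,u_n\rra$ modulo the closed ideal generated by the relators. The idea is to \emph{add new relators} so as to force property~$(T)$ on the resulting group while keeping the presentation Golod--Shafarevich, so that the quotient group remains infinite. The natural mechanism is to impose, for a suitable finite collection of pairs of generators, relations expressing that certain pairs generate copies of a fixed Kazhdan group (or, more flexibly, relations of ``bounded-generation'' or ``relative property~$(T)$'' type, in the spirit of the construction in \cite{Er}). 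The key quantitative point is that each added relator has \emph{bounded degree} with respect to the Zassenhaus filtration, so it contributes a fixed polynomial increment to $H_R(t)$; hence if $\Gamma$ satisfies the GS inequality with enough ``room'' (which one arranges by first replacing $\la X|R\ra$ with a GS presentation having many redundant generators and a large GS gap), the enlarged presentation still satisfies $1-H_X(t)+H_R(t)<0$.

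The steps, in order, are: (1) \emph{Amplify the GS gap.} Show that any GS group has, for every $N$, a GS presentation with at least $N$ generators and with $1-H_X(t)+H_R(t)<-\eps$ at some $t$, for a fixed $\eps>0$ — this is a routine free-product or HNN-type manipulation adjoining free generators and trivial relators is not enough, so instead one adjoins generators together with relators identifying them with words in the old generators, chosen of low Zassenhaus degree, which keeps the GS value negative while inflating $|X|$. (2) \emph{Identify the target Kazhdan configuration.} Fix a finite group presentation $\la Y|S\ra$ of an infinite group $Q$ with property~$(T)$ — e.g. a lattice in a higher-rank $p$-adic group, or one of the examples from \cite{Er} — such that property~$(T)$ is witnessed by a finite generating set and finitely many relations of small ``width.'' (3) \emph{Embed the configuration as added relators.} Partition a large subset of the generators $X$ into blocks, and on each block impose the relations $S$ (after identifying the block with $Y$); the resulting quotient group $\Delta$ surjects onto $Q$, but more importantly one engineers the relations so that the \emph{whole} group $\Delta$ is generated by bounded-index copies of Kazhdan subgroups satisfying a suitable codistance/angle condition, so that Ershov's criterion (generation by a ``property~$(T)$ pair/family'' with angle bounded away from the critical value) yields property~$(T)$ for $\Delta$. (4) \emph{Check GS is preserved.} Count: the added relators number $O(|X|)$ and have Zassenhaus degree bounded by a constant $D$ depending only on $\la Y|S\ra$; so the increment to $H_R(t)$ is at most $C|X|t^{D}$, which for fixed $t$ is dominated by the amplified gap once $|X|$ and $\eps$ are chosen compatibly — wait, $C|X|t^D$ grows \emph{linearly} in $|X|$ just like $H_X(t)=|X|t$, so one must instead ensure $t$ is small enough (or $D$ large via careful choice) that $C t^{D}<\eps' $ where $\eps'$ is the per-generator slack; the amplification in step~(1) must be done so that the GS value is negative \emph{for a specific small $t$}, not merely for some $t$. (5) Conclude that $\Delta$ is an infinite (by GS) quotient of $\Gamma$ with property~$(T)$.

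The main obstacle is step~(3)–(4) in combination: making the added relations simultaneously (a) \emph{strong enough} to force property~$(T)$ on the entire quotient group, not just on a quotient $Q$ of it, and (b) \emph{cheap enough} in Zassenhaus degree and number that the GS inequality survives. Property~$(T)$ is not inherited by a group from a quotient, so one genuinely needs a generation-based criterion (Ershov's $\mathrm{EL}$-type / angle criterion, or the bounded-generation approach via relative property~$(T)$) that certifies $(T)$ for $\Delta$ itself from local Kazhdan data on subgroups; threading that criterion through a Golod--Shafarevich budget is the crux. A secondary technical point is the interaction between the \emph{abstract} group and its pro-$p$ completion: the GS machinery guarantees infiniteness of $A=\Fp[[\Delta_{\phat}]]$ hence of $\Delta$, so one must verify that the property~$(T)$ construction is visible already at the level of the discrete group $\Delta$ and is not destroyed by the completion, which should follow because property~$(T)$ and the generation criterion are statements about the discrete group and its finite-dimensional unitary representations. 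I expect the degree bookkeeping in step~(4) and the precise form of the Kazhdan-generation criterion in step~(3) to be where the real work lies; the rest is assembly.
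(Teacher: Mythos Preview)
Your overall architecture matches the paper's: force property~$(T)$ by imposing Kac--Moody--Steinberg--type relations on the generating set and verify the enlarged presentation remains Golod--Shafarevich.  Step~(3) is essentially the paper's Theorem~\ref{GS_supply} plus the common-quotient trick, and your bookkeeping in step~(4) is the computation in Claim~\ref{R''}.  But step~(1) is where the real difficulty lives, and the mechanism you propose there does not work.

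Adjoining a new generator $y$ together with a relator $y^{-1}w(x)$ is a Tietze move; the new relator is linear in $y$, so its degree (in any degree function with $D(y)\ge 1$) equals $D(y)$, and the contributions to $H_X$ and $H_R$ cancel exactly.  More generally, $W$-good changes of generators, changes of relators, and cleanups never make $1-W(X)+W(R)$ more negative (Lemma~\ref{valuations}(c)(d)); the only elementary operation that genuinely amplifies the gap is the $p$-descent, which multiplies $1-W(X)+W(R)$ by $\frac{1-\tau^p}{1-\tau}>1$ (Lemma~\ref{valuations}(b)) --- and that replaces the group by an index-$p$ subgroup.  So you cannot manufacture the slack you need while staying inside~$\Gamma$.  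The paper's resolution (Theorems~\ref{deepdescent3} and~\ref{deepdescent4}) is precisely this: iterate $p$-descents down the $D$-filtration to reach an open subgroup $K$ with $1-W(X')+W(R')<-M$ for $M$ as large as you like, and then renormalize so that $W(X')$ is a fixed constant while every individual generator has tiny weight.  The latter point is what makes the KMS relation-count go through: the degree-$2$ and degree-$3$ commutator relators contribute $O(W(X')^2)$ and $O(W(X')^3)$ to $W(R_{KM})$, not $O(|X'|)$, and this is only affordable once individual generator weights are $\ll 1$.

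Two further issues your outline does not touch.  First, once you have proved the theorem for a finite-index subgroup of $\Gamma$ you still need it for $\Gamma$; this is not automatic and is handled by a separate argument (Proposition~\ref{finiteindex}) passing through a just-infinite quotient.  Second, for $p<67$ there is no Golod--Shafarevich group of the form $\KMS(\Fp;n_1,\ldots,n_k)$ with property~$(T)$, because the field relations~(R2) over $\Fq$ ruin the GS count when $q>p$; the paper circumvents this by showing the completed group algebra maps onto a finite-codimension subalgebra of an $\Fq$-algebra that \emph{is} GGS over $\Fq$.
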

The result of \cite{Er} was quite surprising and seemed to go against the general theme of ``largeness'' of 
Golod-Shafarevich groups. On the other hand, Theorem~\ref{thm:main0} can be considered a natural addition to the list of ``largeness''
properties of Golod-Shafarevich groups. 
Another important consequence of this theorem
is an affirmative answer to a well-known question about non-amenability of Golod-Shafarevich groups
(see, e.g., \cite[Open Problem~5.2]{Har}),
since quotients of amenable groups are amenable,
while an infinite discrete group cannot be both amenable and Kazhdan.

\begin{Corollary}
\label{thm:nonam}
Golod-Shafarevich groups cannot be amenable.
\end{Corollary}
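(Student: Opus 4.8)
The plan is to deduce this immediately from Theorem~\ref{thm:main0} together with two standard facts about amenable and Kazhdan groups. Let $\Gamma$ be a Golod-Shafarevich group. By Theorem~\ref{thm:main0}, $\Gamma$ surjects onto some infinite group $Q$ with Kazhdan's property $(T)$; the entire content of the corollary is packaged into producing this $Q$, which we are entitled to assume.

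Next I would recall that amenability passes to quotients: if $\Gamma$ is amenable and $\pi\colon\Gamma\twoheadrightarrow Q$ is a surjection, then a left-invariant mean on $\ell^\infty(\Gamma)$ pushes forward along $f\mapsto f\circ\pi$ to a left-invariant mean on $\ell^\infty(Q)$, so $Q$ is amenable. On the other hand, a group that is simultaneously amenable and has property $(T)$ must be compact: amenability forces the trivial representation to be weakly contained in the regular representation, while property $(T)$ makes the trivial representation isolated, and these two facts together are incompatible with noncompactness. For a discrete group, being compact means being finite.

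Combining these observations: if $\Gamma$ were amenable, then its quotient $Q$ would be amenable as well, hence — being discrete and Kazhdan — finite, contradicting the infiniteness of $Q$ guaranteed by Theorem~\ref{thm:main0}. Therefore $\Gamma$ cannot be amenable. The only substantive ingredient here is Theorem~\ref{thm:main0} itself; the remaining steps are routine invocations of classical structural results, so there is no genuine obstacle once the main theorem is in hand.
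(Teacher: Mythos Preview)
Your proof is correct and follows exactly the same route as the paper: invoke Theorem~\ref{thm:main0} to obtain an infinite Kazhdan quotient, then use that amenability passes to quotients while an infinite discrete group cannot be both amenable and Kazhdan.
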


In fact, in Appendix~A to this paper it will be shown that Golod-Shafarevich groups
satisfy a stronger form of non-amenability, called \textit{uniform non-amenability}.

Before discussing the proof of Theorem~\ref{thm:main0}, we observe
that the analogous result is known to be true for (non-elementary) hyperbolic groups,
that is, every hyperbolic group has an infinite quotient with property $(T)$.
The latter is a combination of the following two deep results and
the fact that property $(T)$ is preserved by quotients:
\begin{itemize}
\item[(a)] There exist hyperbolic groups with property $(T)$.
\item[(b)] Any two hyperbolic groups have a common infinite quotient.
\end{itemize}

Since Golod-Shafarevich groups with $(T)$ exist by \cite{Er},
a ``naive'' approach to Theorem~\ref{thm:main0} would be to try
to prove the analogue of (b) for Golod-Shafarevich groups (with respect
to a fixed prime $p$), but such an assertion is almost definitely false.

In fact, the following
would be sufficient to prove Theorem~\ref{thm:main0}.
Suppose that given a Golod-Shafarevich group $\Gamma$ and
a presentation $\la X | R\ra$ for $\Gamma$ with
$1-H_{X}(t)+H_{R}(t)<0$ for some $t=t(\Gamma)\in (0,1)$,
we can construct a group $\Gamma'$ with property $(T)$
which has a presentation $\la X | R'\ra$ (with the same generating
set $X$ as $\Gamma$) such that
$H_{R'}(t)<-(1-H_{X}(t)+H_{R}(t))$ for $t=t(\Gamma)$
(so, in particular, $\Gamma_1$ is also a Golod-Shafarevich group).
Then the group $\Gamma''=\la X\mid R\cup R'\ra$ is a common quotient of $\Gamma$ and $\Gamma'$,
and $\Gamma''$ is Golod-Shafarevich (hence infinite) since
$1-H_{X}(t)+H_{R\cup R'}(t)\leq 1-H_{X}(t)+H_R(t)+H_{R'}(t)<0$.
Thus, $\Gamma$ has an infinite quotient $\Gamma''$, which has property $(T)$
being a quotient of $\Gamma'$.

The supply of Golod-Shafarevich groups with property $(T)$ given by \cite{Er}
was clearly too short for the above strategy to work for an arbitrary Golod-Shafarevich group $\Gamma$.
In \cite{EJ} a larger collection of Golod-Shafarevich groups with $(T)$
was constructed within the class of \textit{Kac-Moody-Steinberg} groups, also defined in \cite{EJ}.
This collection turns out to be sufficient for the proof of Theorem~\ref{thm:main0},
but the approach outlined in the previous paragraph cannot be applied directly.
\subsection{Outline of the proof of the main theorem.}

A key ingredient in the proof of Theorem~\ref{thm:main0} is analysis
of presentations for finite index subgroups of Golod-Shafarevich pro-$p$ groups.
It is easy to see that if $G$ is a GS group (abstract or pro-$p$), then standard presentations
for finite index subgroups of $G$ (obtained via Schreier rewriting process) 
do not necessarily satisfy the GS condition. However, as we will show in Section~3, finite index subgroups of
Golod-Shafarevich pro-$p$ groups always satisfy the generalized Golod-Shafarevich (GGS) condition.
Similarly to the usual GS condition, the generalized Golod-Shafarevich condition
for a presentation $\la X | R\ra$ is defined by
inequality of the form $1-H_{D,X}(t)+H_{D,R}(t)<0$, where
$H_{D,X}(t)$ and $H_{D,R}(t)$ generalize the previously defined series
$H_X(t)$ and $H_R(t)$. This time we start with a
degree function $D:X\to \dbN$ on the generating set $X$, then extend $D$ in a
canonical way to the free pro-$p$ group on $X$ and put $H_{D,X}(t)=\sum_{x\in X}t^{D(x)}$
and $H_{D,R}(t)=\sum_{r\in R}t^{D(r)}$ (the usual GS inequality corresponds to the
case $D(x)=1$ for all $x\in X$).

The key result of Section~3 (see Theorem~\ref{deepdescent3})
asserts that if $G$ is any GGS pro-$p$ group,
then for any real number $M>0$ there exists a finite index subgroup
$K$ of $G$, a (pro-$p$) presentation $\la X| R\ra$ of $K$, a degree function $D$ on $F(X)$,
and a real number $t_0\in (0,1)$ such that $1-H_{D,X}(t_0)+H_{D,R}(t_0)<-M$.

Now given a GGS (abstract) group $\Gamma$, one can prove the existence of an infinite quotient with $(T)$
for $\Gamma$ using the following three-step algorithm:
\begin{itemize}
\item[(1)] Let $\Gamma_{\phat}$ be the pro-$p$ completion of $\Gamma$ (then
$\Gamma_{\phat}$ is a GGS pro-$p$ group).
Find a finite index subgroup $K$ of $\Gamma_{\phat}$ which
has a presentaiton $\la X| R\ra$ satisfying $$1-H_{D,X}(t_0)+H_{D,R}(t_0)<-6\cdot 10^4$$
for some $t_0\in (0,1)$ and degree function $D$.

\item[(2)] (see Theorem~\ref{jaikin4} and a remark after it)
Let  $\Delta$ be a dense finitely generated subgroup of $K$.
Show that there is a Kac-Moody-Steinberg group
$\Lambda$ with property $(T)$ such that
$\Delta$ and $\Lambda$ have a common infinite quotient $\Omega$.
Note that we can take $\Delta=K\cap\iota(\Gamma)$ (where
$\iota:\Gamma\to \Gamma_{\phat}$ is the canonical map),
and thus a finite index subgroup of $\Gamma$ (namely $\iota^{-1}(K)\cap \Gamma$)
has an infinite quotient with $(T)$.

\item[(3)] Deduce from Step~(2) that the entire group $\Gamma$
has an infinite quotient with $(T)$.
\end{itemize}

Of course, Step~(1) can be accomplished by Theorem~\ref{deepdescent3}, and
Step~(3) is possible thanks to the following general result due to Andrei Jaikin-Zapirain
(see Theorem~\ref{finiteindex}):
If $\Gamma$ is any finitely generated group such that
some finite index subgroup of $\Gamma$
has an infinite quotient with $(T)$, then $\Gamma$ itself has
an infinite quotient with $(T)$.

We now comment on how to construct the group $\Omega$ in Step~(2)
and show that it is infinite. First we construct another presentation $\la X'| R'\ra$ of the group $K$
(from Step~(1)) and a new degree function $D'$ such that $H_{D',X'}(t'_0)=12$,
$H_{D',R'}(t'_0)<\frac{1}{1000}$ for some $t'_0\in (0,1)$, and an extra technical condition holds
(this is possible by Theorem~\ref{deepdescent4}).
If $p\geq 67$, we construct $\Omega$ by adapting the ``naive'' approach to Theorem~\ref{thm:main0}
discussed earlier, and we are able to make $\Omega$ a GGS group (and hence infinite).

If $p<67$, this technique does not apply directly; in fact, we do not know any examples of
groups with $(T)$ which are GGS with respect to $p<67$.
In this case we prove that $\Omega$ is infinite by showing that
there exist a finite field $\dbF$ of characteristic $p$
and an $\dbF$-algebra $A$ which is GGS as an algebra over $\dbF$
(hence infinite), such that the completed group algebra $\Fp[[{\Omega}_{\phat}]]$
can be mapped onto a finite codimension subalgebra of $A$. Note that neither
$\Fp[[{\Omega}_{\phat}]]$ nor $A$ will be GGS as algebras over $\Fp$.
What makes such phenomenon possible is that many relations
needed to define $A$ as an algebra over $\Fp$ become
redundant if the ground field is changed from $\Fp$ to $\dbF$.

\subsection{Organization of the paper} In Section~2
we set up basic notations and discuss various forms of the Golod-Shafarevich condition
for abstract groups, pro-$p$ groups and associative algebras. In Section~3
we analyze presentations for subgroups of generalized Golod-Shafarevich groups.
Finally, Section~4 contains the proof of Theorem~\ref{thm:main0}.

The paper concludes with two appendices. Appendix~A by the author and Andrei Jaikin-Zapirain
contains a proof of uniform non-amenability of Golod-Shafarevich groups.
Basic background on Kazhdan's property (T) and amenability is given at the
beginning of Appendix~A. In Appendix~B by Andrei Jaikin-Zapirain
a new lower bound for the subgroup growth of Golod-Shafarevich groups
is established.

\vskip .2cm
{\bf Acknowledgements.} I am extremely grateful to Andrei Jaikin-Zapirain whose
generous contributions to this manuscript yielded significant improvement of the
main results and helped clarify the exposition. In particular, the proofs of Theorem~\ref{finiteindex} and
Proposition~\ref{deepdescent2} and generalization of many results in Section~3
are all due to him.
I would also like to thank Efim Zelmanov for introducing me to Golod-Shafarevich groups
several years ago and Martin Kassabov and Alex Lubotzky for useful discussions at various stages
of this project. 
This work is supported in part by the NSF grant DMS-0901703.

\section{Pro-$p$ presentations and Golod-Shafarevich theorem}

While the main result of this paper is about abstract groups,
we will work primarily with presentations
of finitely generated pro-$p$ groups. For this reason,
the simplest terminology and notations will be reserved for pro-$p$ groups
and their presentations.

Throughout the paper $p$ will be a fixed prime number.
Given a finite set $X$, by $F(X)$ we denote the free pro-$p$ group on $X$.
If $F$ is a finitely generated free pro-$p$ group and $X$ is a free
generating set for $F$, we shall canonically identify $F$ with $F(X)$.

A \textit{presentation} is a pair $(X,R)$,  where
$X$ is a finite set and $R$ is a countable subset of $F(X)$.
By $\Gp(X,R)$ we will denote the pro-$p$ group
defined by the presentation $(X,R)$, that is,
$$\Gp(X,R)=F(X)/{\la R\ra}^{F(X)},$$
where ${\la R\ra}^{F(X)}$ is the (closed) normal subgroup of $F(X)$ generated by $R$.

The free abstract group on $X$ will be denoted by $F_{abs}(X)$, and
ordinary presentations of abstract groups will be referred to as
\textit{abstract presentations.} 

\vskip .1cm
Given a finite set $U=\{u_1,\ldots, u_n\}$ and a field $K$,
we put $K\lla U\rra=K\lla u_1,\ldots, u_n\rra,$
the algebra of non-commutative power series in $u_1,\ldots, u_n$ over $K$.
Presentations of pro-$p$
groups will be studied via the Magnus embedding
of a free pro-$p$ group $F(\{x_1,\ldots,x_n\})$
into $(\Fp\lla u_1,\ldots, u_n\rra)^*$ given by
$x_i\mapsto 1+u_i$ for $1\leq i\leq n$. Such use
of letters $X$ and $U$ will be kept throughout the paper.

\subsection{Degree and weight functions} We start by defining the notions of degree
and weight functions on algebras of the form $K\lla U\rra$.
\vskip -.2cm
\begin{Definition}\rm
Let $U=\{u_1,\ldots, u_n\}$ be a finite set and let $K$ be a field.

\noindent
(a) A function $d: K\lla U\rra\to \dbZ_{\geq 0}\cup\{\infty\}$ is called
a \textit{degree function} on $K\lla U\rra$ if
\begin{itemize}
\item[(i)] $d(f)=\infty$ if and only if $f=0$
\item[(ii)] $d(f)=0$ if and only if $f$ is invertible in $K\lla U\rra$
\item[(iii)] $d(fg)=d(f)+d(g)$ for any $f,g\in K\lla U\rra$
\item[(iv)] If $f=\sum_{\alpha} c_{\alpha} m_{\alpha}$ where each
$c_{\alpha}\in K$, each $m_{\alpha}$ is of the form $u_{i_1}\ldots u_{i_k}$
and all $m_{\alpha}$ are distinct,
then $d(f)=\min\{d(m_{\alpha}) : c_{\alpha}\neq 0\}$.
\end{itemize}
(b) A function $w: K\lla U\rra\to [0,1]$ is called
a \textit{weight function} on $K\lla U\rra$ if
\begin{itemize}
\item[(i)] $w(f)=0$ if and only if $f=0$
\item[(ii)] $w(f)=1$ if and only if $f$ is invertible in $K\lla U\rra$
\item[(iii)] $w(fg)=w(f)w(g)$ for any $f,g\in K\lla U\rra$
\item[(iv)] If $f=\sum_{\alpha} c_{\alpha} m_{\alpha}$ where each
$c_{\alpha}\in K$, each $m_{\alpha}$ is of the form $u_{i_1}\ldots u_{i_k}$
and all $m_{\alpha}$ are distinct,
then $w(f)=\max\{w(m_{\alpha}) : c_{\alpha}\neq 0\}$.
\end{itemize}
\end{Definition}
\vskip -.2cm
\begin{Remark} Every degree function $d$ on $K\lla U\rra$ satisfies the condition
\begin{itemize}
\item[(iv)'] $d(f+g)\geq \min\{d(f), d(g)\}$ for any $f,g\in K\lla U\rra$
\end{itemize}
(this follows from (iv)), so $d$ is a ring valuation on $K\lla U\rra$.
Similarly, for any weight function $w$ on $K\lla U\rra$ we have
\begin{itemize}
\item[(iv)''] $w(f+g)\leq \max\{w(f), w(g)\}$ for any $f,g\in K\lla U\rra$.
\end{itemize}
\end{Remark}
\vskip .15cm
Next we define the corresponding notions for free pro-$p$ groups.
Let $F$ be a finitely generated free pro-$p$ group and $X=\{x_1,\ldots,x_n\}$
a free generating set for $F$. Consider another set with $n$ elements
$U=\{u_1,\ldots, u_n\}$ and embed $F$ into $\Fp\lla U\rra^*$ by sending $x_i$ to $1+u_i$.

\begin{Definition}\rm Let $F$, $X$ and $U$ be as above.
\begin{itemize}
\item[(a)] Two functions $\Phi: F\to \dbR$ and $\phi:\Fp\lla U\rra\to\dbR$
will be called \textit{$X$-compatible} if
\begin{equation*}
\Phi(f)=\phi(f-1) \mbox{ for any } f\in F.
\end{equation*}
\item[(b)] A function $D: F\to \dbN\cup\{\infty\}$ is called
a \textit{degree function} on $(F,X)$ if it is $X$-compatible with
some degree function $d$ on $\Fp\lla U\rra$.
\item[(c)] The unique degree function $D$ on $(F,X)$
such that $D(x)=1$ for each $x\in X$ is called the \textit{standard degree function} on $(F,X)$.
\item[(d)] A function $W: F\to \dbZ_{\geq 0}\cup\{\infty\}$ is called
a \textit{weight function} on $(F,X)$ if it is $X$-compatible with
some weight function $w$ on $\Fp\lla U\rra$.
\end{itemize}
\end{Definition}
\vskip -.2cm

Clearly, there is a bijection between degree functions
on $(F,X)$ and $\Fp\lla U\rra$, and the same is true for
weight functions. From now on we shall be mostly concerned
with degree and weight functions on $(F,X)$.

It is easy to see that any weight function on $(F,X)$ is of the following form:
take any $t_1,\ldots, t_n\in (0,1)$, set $W(x_i)=t_i$,
and then extend $W$ uniquely to $F$ using (iii) and (iv)
in the definition of a weight function on $\Fp\lla U\rra$.
Any weight function can be approximated (in the suitable sense)
by integral weight functions defined below.

\begin{Definition}\rm Let $D$ be a degree function on $(F,X)$ and $t\in (0,1)$.
The function $W: F\to (0,1)$ defined by $$W(f)=t^{D(f)}$$ will be called
the \textit{$(D,t)$-weight function} on $(F,X)$. Weight functions
of this form will be called \textit{integral}.
\end{Definition}

\vskip .12cm
The following definition will be convenient for the study of Golod-Shafarevich groups.
\begin{Definition}\rm Let $X$ be a finite set and $S$ a countable subset of $F=F(X)$.
\begin{itemize}
\item[(a)] If $W$ is a weight function on $(F,X)$, we set $$W(S)=\sum_{f\in S} W(f)\in\dbR_{\geq 0}\cup\{\infty\}$$
\item[(b)] Let $D$ be a degree function on $(F,X)$, and assume that for each $n\in\dbN$
the set $\{s\in S\,:\, D(s)=n\}$ is finite. Then the power series
$$H_{D,S}(t)=\sum_{f\in S}t^{D(f)}$$
will be called the \textit{Hilbert series} of $S$ (with respect to $D$). We will write
$H_{S}$ instead of $H_{D,S}$ whenever $D$ is clear from context. Note that for any $t_0\in [0,1)$
we have $H_{D,S}(t_0)=W(S)$ where $W$ is the $(D,t_0)$-weight function on $(F,X)$.
\end{itemize}
\end{Definition}
Similarly, one defines weights and Hilbert series for subsets of power series algebras $K\lla U\rra$.
\vskip .12cm

Next we define a class of filtrations on finitely generated
pro-$p$ groups which come from degree functions.

\begin{Definition}\rm Let $G$ be a finitely generated pro-$p$ group,
$X$ a finite set and $\pi:F(X)\to G$ a surjective homomorphism.
Let $D$ be a degree function on $(F(X),X)$. We define the filtration $\{G_n\}$
of $G$ by
$$G_n=\{g\in G : g=\pi(f) \mbox{ for some }f\in F(X) \mbox{ with }D(f)\geq n.\}$$
We will call $\{G_n\}$ the $D$-filtration of $G$.
\end{Definition}
It is easy to see that each $G_n$ is an open subgroup of $G$ and $\cap \{G_n\}=\{1\}$.
Furthermore, $\{G_n\}$ is a $p$-filtration of $G$, that is,
$[G_i,G_j]\subseteq G_{i+j}$ and $(G_i)^p\subseteq G_{pi}$.
If $D$ is the standard degree function on $(F(X),X)$, the filtration $\{G_n\}$ is known as the
\textit{Zassenhaus $p$-filtration} of $G$.
\vskip .1cm
Similarly, if $d$ is a degree function on a power series algebra $K\lla U\rra$,
one defines the $d$-filtration on any algebra which is a quotient of $K\lla U\rra$.
\vskip .1cm

Finally, we recall the classical definition of a Hilbert series which can be associated
to any algebra with a suitable filtration.
\begin{Definition}\rm  Let $A$ be an associative algebra over some field $K$,
and let $A=A_0\supseteq A_1 \supseteq A_2\supseteq \ldots$ be a descending
chain of $K$-subalgebras in $A$ such that $A_i\cdot A_j\subseteq A_{i+j}$
and $\dim_{K}(A_i/A_{i+1})<\infty$. The series
$$Hilb_{A}(t)=\sum_{n=0}^{\infty}\dim_{K} (A_n/A_{n+1}) t^n$$
will be called the \textit{classical Hilbert series} of $A$ with respect to $\{A_n\}$.
\end{Definition}

\subsection{Golod-Shafarevich inequality}

Although the Golod-Shafarevich inequality is usually introduced as a tool for
proving that some groups are infinite, it is actually a result about
algebras. The following theorem which we call the \textit{generalized Golod-Shafarevich inequality }
(GGS inequality) is proved in Koch's Appendix to \cite{Ko2} (see formula (2.11) on p.105 of \cite{Ko2} 
and a remark at the end of this section).
\begin{Theorem}[(GGS inequality)]
\label{GGSineq}
Let $U$ be a finite set, $K$ a field and $d$ a degree function on $K\lla U\rra$.
Let $A=K\lla U\rra/I$ for some ideal $I$, and let $S$ be a generating set for $I$
such that $\{s\in S\colon d(s)=n\}$ is finite for each $n\in\dbN$.
\footnote{It is easy to see that any ideal of $K\lla U\rra$ has a generating set
with this property.}
Let $Hilb_A(t)$ be the classical Hilbert series of $A$ with respect to the $d$-filtration.
Then
$$\frac{(1-H_{d,U}(t)+H_{d,{S}}(t))\cdot Hilb_{A}(t)}{1-t}\geq \frac{1}{1-t},$$
where for power series $\sum \alpha_i t^i$ and $\sum \beta_i t^i$,
inequality $\sum \alpha_i t^i\geq\sum \beta_i t^i$ means that $\alpha_i\geq \beta_i$
for each $i$.
\end{Theorem}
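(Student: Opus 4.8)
The plan is to reprove it by the classical Golod--Shafarevich dimension count, adapted to an arbitrary degree function $d$ (this is, up to reorganization, Koch's argument). Write $R=K\lla U\rra$, let $\mm$ be its maximal ideal (the non‑invertible elements), put $d_i=d(u_i)$ and $H=H_{d,U}(t)=\sum_i t^{d_i}$, and filter $R$ by $R_n=\{f\in R:d(f)\ge n\}$. Since $A\ne0$ the ideal $I$ is proper, so $I\subseteq\mm$; give $A=R/I$ the image filtration $\{A_n\}$ and set $a(t)=Hilb_{A}(t)=\sum_n\dim_K(A_n/A_{n+1})\,t^n$, whose constant term is $1$. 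The whole argument is a coefficientwise comparison of power series, for which it is convenient to use the partial‑sum operator $\sigma_n\big(\sum_k\phi_kt^k\big)=\sum_{k\le n}\phi_k=[t^n]\big(\phi/(1-t)\big)$; the assertion to be proved is then exactly that $\sigma_n\big((1-H+H_{d,S})\,a\big)\ge1$ for all $n\ge0$.

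First I would lay out two terms of a free resolution of the trivial $A$‑module $K$. The maximal ideal of $R$ is a free left $R$‑module, $\mm=\bigoplus_i Ru_i$ (each monomial of positive degree is uniquely of the form $wu_i$), and in fact $R_n=\bigoplus_i R_{n-d_i}u_i$ for every $n$. Hence, writing $\mm_A$ for the maximal ideal of $A$ and $\bar u_i$ for the images of the generators, the map of filtered left $A$‑modules $\partial_1\colon\bigoplus_i A[-d_i]\to A$ sending the $i$‑th basis vector to $\bar u_i$ has image $\mm_A=\Ker(\varepsilon\colon A\to K)$ and is \emph{strict}, as is $\varepsilon$. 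The key claim is that $N:=\Ker\partial_1$ is generated as a left $A$‑module by ``relation vectors'' $\nu_s$ $(s\in S)$: since $s\in I\subseteq\mm$ one writes $s=\sum_i f^{(s)}_iu_i$ uniquely with $d(f^{(s)}_i)\ge d(s)-d_i$ and sets $\nu_s=(\bar f^{(s)}_i)_i\in N$, which has filtration degree $\ge d(s)$. To prove the claim, take $(\bar g_i)_i\in N$, i.e.\ $\sum_i g_iu_i\in I$; working modulo $R_M$ for $M$ large — all that a fixed coefficient of the final inequality sees, and what disposes of the fact that $I$ is the \emph{closed} two‑sided ideal generated by $S$ — write $\sum_i g_iu_i=\sum_j a_js_{k_j}b_j$, split $b_j=c_j+b_j'$ with $c_j\in K$ and $b_j'\in\mm=\bigoplus_iRu_i$, substitute $s_{k_j}=\sum_i f^{(s_{k_j})}_iu_i$, compare coefficients of each $u_i$ using uniqueness, and reduce modulo $I$ using $\bar s_{k_j}=0$: the contributions of the $b_j'$ drop out, leaving $(\bar g_i)_i=\sum_j c_j\,\bar a_j\,\nu_{s_{k_j}}$. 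Granting the claim, one obtains an exact sequence of filtered left $A$‑modules
$$\bigoplus_{s\in S}A[-d(s)]\ \xrightarrow{\ \partial_2\ }\ \bigoplus_i A[-d_i]\ \xrightarrow{\ \partial_1\ }\ A\ \xrightarrow{\ \varepsilon\ }\ K\ \longrightarrow\ 0,$$
where $\partial_2$ sends the generator of the $s$‑summand to $\nu_s$.

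The rest is bookkeeping with Hilbert series. From strictness of $\partial_1$ and $\varepsilon$ (the two inclusions being strict automatically), truncating the exact sequence $0\to N\to\bigoplus_i A[-d_i]\to A\to K\to 0$ modulo filtration degree $n+1$ keeps it exact, so the alternating sum of dimensions vanishes and yields $\sigma_n(Hilb_{N})=\sigma_n(Ha-a+1)$. On the other hand $\partial_2$ makes $N$ a quotient of the free module $\bigoplus_{s\in S}A[-d(s)]$, whose Hilbert series is $H_{d,S}\cdot a$; since $N/N_{n+1}$ is then a quotient of $\big(\bigoplus_sA[-d(s)]\big)/\big(\bigoplus_sA[-d(s)]\big)_{n+1}$ — and here no strictness is needed — we get $\sigma_n(Hilb_{N})\le\sigma_n(H_{d,S}\,a)$. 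Combining the two, $\sigma_n(Ha-a+1)\le\sigma_n(H_{d,S}\,a)$, which rearranges (using linearity of $\sigma_n$ and $\sigma_n(1)=1$) to $\sigma_n\big((1-H+H_{d,S})\,a\big)\ge1$, i.e.\ the claimed inequality. The ordinary Golod--Shafarevich inequality is the special case $d\equiv1$ on $U$.

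I expect the only genuinely delicate step to be the claim that $N$ is generated by the $\nu_s$. This is the one place where it matters that $I$ is a \emph{two‑sided} (and topologically closed) ideal generated by $S$: the comparison of coefficients of the various $u_i$ must be carried out in the noncommutative algebra $R$ with careful bookkeeping of left versus right multiplications, and the passage from topological to finite generation has to be arranged by truncation modulo $R_M$. Everything else is linear algebra over $K$, together with the structural observation that the factor $1/(1-t)$ in the statement is exactly what converts the (sign‑indefinite) coefficientwise inequality into a comparison of partial sums — and for that, the crude estimate ``$N$ is a quotient of a free module with Hilbert series $H_{d,S}\,a$'' is all one needs.
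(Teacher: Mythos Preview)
Your proof is correct and is precisely the classical Golod--Shafarevich dimension-counting argument of Koch, which is exactly what the paper defers to: the paper gives no self-contained proof of this theorem but cites Koch's Appendix to \cite{Ko2}, adding only a remark that explains how to drop Koch's assumption that $S$ be finite by the same truncation-mod-$R_M$ device you invoke. The one genuinely delicate step you flagged---that $N=\Ker\partial_1$ is spanned (modulo each filtration level) by the relation vectors $\nu_s$, which requires using that $I$ is the closed two-sided ideal on $S$ and comparing last-letter coefficients in $\mm=\bigoplus_i Ru_i$---is handled correctly.
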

The important consequence of the GGS inequality is that if $1-H_{d,U}(t_0)+H_{d,S}(t_0)<0$
for some $0<t_0<1$, the series $Hilb_{A}(t_0)$ must diverge, so in particular $A$ is infinite-dimensional.
We now explain how GGS inequality applies to (pro-$p$) groups.

Let $G$ be a pro-$p$ group given by a presentation $(X,R)$,
where $X=\{x_1,\ldots, x_n\}$. As before, consider the standard
embedding of $F(X)$ into $\Fp\lla U\rra$ where $U=\{u_1,\ldots, u_n\}$,
and let $I$ be the ideal of $\Fp\lla U\rra$ generated by the set $S=\{r-1 : r\in R\}$. It is well known
\cite[Theorem~7.17]{Ko} that there is a (canonical) isomorphism between
the completed group algebra $F_p[[G]]$ and $\Fp\lla U\rra/I$ making the following
diagram commutative:
\begin{equation}
\label{xy:GSdiag}
\xymatrix{
F(X)\ar[d]\ar[rr] &&  \Fp\lla U\rra  \ar[d]\\
G\ar[r]& \F_p[[G]] \ar[r] &  \Fp\lla U\rra/I
}
\end{equation}
(where all other maps are defined in the obvious way).

Now let $D$ and $d$ be $X$-compatible degree functions on $(F(X),X)$ and $\Fp\lla U\rra$, respectively,
so that $H_{D,X}(t)= H_{d,U}(t)$ and $H_{D,R}(t)= H_{d,S}(t)$.
Define the \textit{$D$-filtration on $\Fp[[G]]$} to be
the image of the $d$-filtration on $\Fp\lla U\rra/I$ under the isomorphism
$\Fp\lla U\rra/I\to \Fp[[G]]$ in \eqref{xy:GSdiag} and let
$Hilb_{D,\Fp[[G]]}(t)$ be the associated (classical) Hilbert series.
Then the inequality of power series in Theorem~\ref{GGSineq} can be rewritten
as follows:
\begin{equation}
\label{GGSg}
\frac{(1-H_{D,X}(t)+H_{D,{R}}(t))\cdot Hilb_{D,\Fp[[G]]}(t)}{1-t}\geq \frac{1}{1-t}
\end{equation}
Thus, if
$$1-H_{D,X}(t_0)+H_{D,R}(t_0)<0 \mbox{ for some } t_0\in (0,1),\eqno (GGS)$$
then $Hilb_{D,\Fp[[G]]}(t_0)$ diverges, so the algebra $\Fp[[G]]$
(and hence the group $G$) is infinite.

\begin{Definition}\rm (a) A presentation $(X,R)$ is said to satisfy
the \textit{generalized Golod-Shafarevich (GGS) condition} if there
exists a degree function $D$ on $(F(X),X)$ and $t_0\in (0,1)$ such that
$1-H_{D,X}(t_0)+H_{D,R}(t_0)<0$.

(b) A pro-$p$ (resp. an abstract) group is called a \textit{generalized Golod-Shafarevich group}
if it has a pro-$p$ (resp. abstract) presentation satisfying the GGS condition.

(c) A pro-$p$ (resp. an abstract) group is called a \textit{Golod-Shafarevich group}
if it has a pro-$p$ (resp. abstract) presentation satisfying the GGS condition
with respect to the standard degree function.
\end{Definition}
\vskip -.3cm
The following result summarizes the above discussion.
\begin{Corollary}
\label{cor:GS}
Let $(X,R)$ be a presentation satisfying the GGS condition
for some degree function $D$ on $(F(X),X)$ and $t_0\in (0,1)$.
Let $G=\Gp(X,R)$ be the pro-$p$ group defined by this presentation.
Then $G$ is infinite, and moreover, the series $Hilb_{D,\Fp[[G]]}(t_0)$ diverges.
\end{Corollary}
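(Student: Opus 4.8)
The proof is really just a matter of assembling the discussion preceding the definition of the GGS condition, so the plan is to do exactly that. First I would fix notation: write $X=\{x_1,\dots,x_n\}$, introduce $U=\{u_1,\dots,u_n\}$, and embed $F(X)$ into $\Fp\lla U\rra^*$ by $x_i\mapsto 1+u_i$ as in Section~2; put $S=\{r-1:r\in R\}$, let $I$ be the ideal of $\Fp\lla U\rra$ generated by $S$, and set $A=\Fp\lla U\rra/I$. By \cite[Theorem~7.17]{Ko} there is a canonical isomorphism $A\cong\Fp[[G]]$ fitting into diagram~\eqref{xy:GSdiag}. Let $d$ be the unique degree function on $\Fp\lla U\rra$ that is $X$-compatible with $D$; then $H_{d,U}(t)=H_{D,X}(t)$ and, since $d(r-1)=D(r)$ for each $r$, also $H_{d,S}(t)=H_{D,R}(t)$. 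Because $t_0>0$, the inequality $1-H_{D,X}(t_0)+H_{D,R}(t_0)<0$ forces $H_{D,R}(t_0)=\sum_{r\in R}t_0^{D(r)}<\infty$, which is possible only if $\{r\in R:D(r)=m\}$ is finite for every $m$; hence $\{s\in S:d(s)=m\}$ is finite for every $m$ and the hypotheses of Theorem~\ref{GGSineq} are met.

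Next I would apply Theorem~\ref{GGSineq} to $A$ with its $d$-filtration and transport everything across the isomorphism $A\cong\Fp[[G]]$: the $D$-filtration on $\Fp[[G]]$ is by definition the image of the $d$-filtration, so $Hilb_A(t)=Hilb_{D,\Fp[[G]]}(t)$, and the conclusion of Theorem~\ref{GGSineq} becomes exactly the coefficientwise inequality of formal power series~\eqref{GGSg}. It then remains to evaluate \eqref{GGSg} at $t=t_0$, which I would do by contradiction. The coefficients of $Hilb_{D,\Fp[[G]]}$ are nonnegative, so $Hilb_{D,\Fp[[G]]}(t_0)\in[0,+\infty]$; suppose this value is finite. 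Since $H_{D,X}$ is a polynomial and $H_{D,R}(t_0)<\infty$ as noted, every series occurring in \eqref{GGSg} then converges absolutely at $t_0\ge 0$, so the coefficientwise inequality passes to a numerical inequality at $t_0$; clearing the positive factor $1/(1-t_0)$ yields $\big(1-H_{D,X}(t_0)+H_{D,R}(t_0)\big)\cdot Hilb_{D,\Fp[[G]]}(t_0)\ge 1>0$, contradicting $1-H_{D,X}(t_0)+H_{D,R}(t_0)<0$ together with $Hilb_{D,\Fp[[G]]}(t_0)\ge 0$. Hence $Hilb_{D,\Fp[[G]]}(t_0)=+\infty$, i.e.\ the series diverges.

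Finally, since $Hilb_{D,\Fp[[G]]}$ has nonnegative coefficients and diverges at $t_0>0$, infinitely many of its coefficients are nonzero; as these coefficients are the numbers $\dim_{\Fp}$ of the successive quotients of the $D$-filtration of $\Fp[[G]]$, the finite-level quotients of $\Fp[[G]]$ have unbounded $\Fp$-dimension, so $\Fp[[G]]$ is infinite-dimensional over $\Fp$, and therefore $G$ is infinite (a finite pro-$p$ group $G$ would have $\Fp[[G]]=\Fp[G]$, of dimension $|G|<\infty$). The only point requiring genuine care — everything else being the bookkeeping already carried out in the text — is the passage from the formal inequality \eqref{GGSg} to a numerical inequality at $t_0$; this is legitimate precisely because the GGS hypothesis itself delivers $H_{D,R}(t_0)<\infty$, and one argues by contradiction on the value of $Hilb_{D,\Fp[[G]]}(t_0)$ so that all series in sight converge there.
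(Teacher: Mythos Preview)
Your proof is correct and follows exactly the approach the paper takes: the corollary is stated there as a summary of the preceding discussion, and you have simply written out that discussion carefully, applying Theorem~\ref{GGSineq} to $A=\Fp\lla U\rra/I\cong\Fp[[G]]$ and reading off divergence of $Hilb_{D,\Fp[[G]]}(t_0)$ from the sign of $1-H_{D,X}(t_0)+H_{D,R}(t_0)$. Your extra care in checking that the GGS hypothesis itself forces $\{r\in R:D(r)=m\}$ to be finite for each $m$, and in justifying the passage from the coefficientwise inequality~\eqref{GGSg} to a numerical one at $t_0$, are details the paper leaves implicit.
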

\vskip -.34cm

The next result shows that the Hilbert series $Hilb_{D,\Fp[[G]]}(t)$
can be expressed directly in terms of the $D$-filtration of $G$.

\begin{Proposition}
\label{GGS2}
Let $G$ be a pro-$p$ group with a finite generating set $X$,
and let $D$ be some degree function on $(F(X),X)$.
Let $\{G_n\}$ and $\{A_n\}$ be the $D$-filtrations on $G$ and $A=\Fp[[G]]$, respectively.
Let $a_n(G)=\dim_{\Fp}(A_n/A_{n+1})$ and $c_n(G)=\dim_{\Fp}(G_n/G_{n+1})$
for $n\in\dbN$. Then
\begin{equation}
\label{eq:restricted}
\sum\limits_{n=0}^{\infty} a_n(G) t^n = \prod\limits_{n=1}^{\infty}\left(\frac{1-t^{np}}{1-t^n}\right)^{c_n(G)}.
\end{equation}
\end{Proposition}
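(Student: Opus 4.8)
The plan is to pass to the associated graded objects on both sides and to identify the right-hand side as the graded character of a completed group algebra whose associated graded is the restricted enveloping algebra of the graded Lie algebra attached to the $D$-filtration of $G$. Concretely, first I would pass from $G$ to its $D$-filtration $\{G_n\}$ and form $L=\gr G=\ooplus_{n\geq 1} G_n/G_{n+1}$, which (because $\{G_n\}$ is a $p$-filtration, so $[G_i,G_j]\subseteq G_{i+j}$ and $(G_i)^p\subseteq G_{pi}$) is a graded restricted Lie algebra over $\Fp$, with $\dim_{\Fp}L_n=c_n(G)$. On the algebra side, the $D$-filtration $\{A_n\}$ on $A=\Fp[[G]]$ has associated graded $\gr A=\ooplus A_n/A_{n+1}$, so that $\sum a_n(G)t^n = Hilb_A(t) = \sum_n \dim_{\Fp}(\gr A)_n\, t^n$.

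The key step is then the identification $\gr A \cong u(L)$, the restricted universal enveloping algebra of $L$, as graded $\Fp$-algebras. This is the filtered-to-graded analogue of the Magnus/Quillen theorem that the associated graded of the group algebra with respect to the dimension subgroup filtration is the restricted enveloping algebra of the associated graded Lie algebra; here one uses the $D$-filtration in place of the standard lower $p$-central (Zassenhaus) filtration. The inclusion $G_n/G_{n+1}\hookrightarrow A_n/A_{n+1}$ via $g\mapsto g-1$ induces a homomorphism $u(L)\to \gr A$; surjectivity follows because $A=\Fp[[G]]$ is topologically generated over $\Fp$ by group-like elements, and injectivity follows by a dimension count against a PBW-type basis, or by reducing to the case of a free pro-$p$ group (where both sides are the full non-commutative power series algebra graded appropriately) and then quotienting. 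One should be slightly careful that $D$ need not be the standard degree function, so the cited classical statements must be invoked in the form that applies to an arbitrary $p$-filtration arising from a degree function; this is where I expect to lean on the setup of Section~2 (in particular the fact, noted after the definition of the $D$-filtration, that $\{G_n\}$ really is a $p$-filtration with $\cap G_n=\{1\}$, so the dimension subgroups of the filtration are exactly the $G_n$).

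Granting $\gr A\cong u(L)$, the proof is finished by the graded restricted PBW theorem: choosing a homogeneous $\Fp$-basis of $L$ consisting of $c_n(G)$ elements in degree $n$ for each $n$, the algebra $u(L)$ has an $\Fp$-basis of ordered monomials in these basis elements in which each generator of degree $n$ appears with exponent in $\{0,1,\ldots,p-1\}$. The generating function of such monomials by total degree is exactly
\begin{equation*}
\prod_{n=1}^{\infty}(1+t^n+t^{2n}+\cdots+t^{(p-1)n})^{c_n(G)}
=\prod_{n=1}^{\infty}\left(\frac{1-t^{np}}{1-t^n}\right)^{c_n(G)},
\end{equation*}
which matches $\sum a_n(G)t^n=Hilb_A(t)$ and yields \eqref{eq:restricted}.

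The main obstacle is the identification $\gr\Fp[[G]]\cong u(\gr G)$ for the $D$-filtration rather than the Zassenhaus filtration: one must make sure the $D$-filtration coincides with the sequence of dimension subgroups $G\cap(1+A_n)$ of $\Fp[[G]]$, since the classical theorem is naturally phrased for the latter. I would establish this by noting that $G_n = \{g : g-1\in A_n\}$ essentially by construction of the $D$-filtration on $A$ as the image of the $d$-filtration on $\Fp\lla U\rra/I$ under the isomorphism in \eqref{xy:GSdiag}, together with the compatibility $\Phi(f)=\phi(f-1)$ built into the definition of $X$-compatible functions. Once that identification is in place, the PBW bookkeeping in the last paragraph is routine.
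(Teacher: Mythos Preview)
Your outline is essentially the paper's own proof: form $L=\gr G$ and $B=\gr A$, build the map $\iota_*:\calU(L)\to B$ from $g\mapsto g-1$, prove it is an isomorphism, and read off the Hilbert series via restricted PBW. Surjectivity goes exactly as you say.

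The one genuine gap is your injectivity argument. ``Dimension count against a PBW basis'' is circular here: the dimension of $B_n$ is precisely what you are trying to compute. ``Reduce to the free pro-$p$ group and then quotient'' is also problematic, because passing to associated graded does not commute with taking quotients---knowing $\calU(\gr F)\cong \gr\Fp[[F]]$ for the free group $F$ does not by itself give the same statement for $G=F/N$. The paper handles injectivity by a different and very clean reduction: first do the case of \emph{finite} $G$, where $\dim_{\Fp}B=|G|=p^k$, $\dim_{\Fp}L=k$, hence $\dim_{\Fp}\calU(L)=p^k$ by PBW, so the surjection $\iota_*$ is forced to be an isomorphism; then for general $G$ observe that $a_n(G)=a_n(G/G_N)$ and $c_n(G)=c_n(G/G_N)$ for all $n<N$, so the identity \eqref{eq:restricted} for each finite quotient $G/G_N$ gives it for $G$.

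A small remark on your ``main obstacle'': you do not actually need the equality $G_n=G\cap(1+A_n)$. For $\iota$ to be well-defined one only needs the inclusion $G_n\subseteq 1+A_n$, which is immediate from the definitions (and is all the paper uses). The reverse inclusion is a consequence of the proposition rather than an input to it.
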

\begin{proof}
In the special case when $D$ is the standard degree function (and hence $\{G_n\}$ is the Zassenhaus filtration on $G$),
Proposition~\ref{GGS2} is an easy consequence of Quillen's theorem~\cite{Qu}
(see \cite[Chapter~12]{DDMS} for a complete proof). Even though this special
case is sufficient for the proof of Theorem~\ref{thm:main0}, we give a proof of
Proposition~\ref{GGS2} for an arbitrary degree function $D$.

Let $L=\oplus_{n\geq 1} G_n/G_{n+1}$ and $B=gr(A)=\oplus_{n\geq 0} B_n$
where $B_n=A_n/A_{n+1}$. Then $L$ is a graded $p$-Lie algebra with respect to
operations $[gG_{n+1}, hG_{m+1}]=[g,h] G_{n+m+1}$ and $(gG_{n+1})^p=g^p G_{pn+1}$,
and similarly $B$ is a graded associative algebra with $B_n^p\subseteq B_{np}$.
We can think of $G$ as a subset of $A$; it follows from the definition
of $D$-filtrations on $G$ and $A$ that $G_n\subseteq 1+A_n$ for all $n$, and
hence we have a natural map $\iota:L\to B$ such that $\iota(g G_{n+1})=(g-1)+A_{n+1}.$
It is easy to see that $\iota$ is a homomorphism of
graded $p$-Lie algebras and thus uniquely extends to a homomorphism
of graded associative algebras $\iota_*:\calU (L)\to B$ where $\calU (L)$ is the universal
$p$-envelope of $L$. First we show that $\iota_*$ is surjective.

Identify $A$ with a quotient of $\Fp\lla U\rra$ via
\eqref{xy:GSdiag}. Let $\pi: \Fp\lla U\rra \to A$
be the corresponding surjection and let $d$ be the degree function on $\Fp\lla U\rra$
$X$-compatible with $D$. 
Then $A_n=\pi((\Fp\lla U\rra)_n)$ where
$$(\Fp\lla U\rra)_n=\{ f\in \Fp\lla U\rra : d(f)\geq n\}.$$
Let $d_j=d(u_{j})=D(x_j)$. For $n\in\dbN$ let $\theta_n$ be the composite map $(\Fp\lla U\rra)_n\to A_n\to B_n$.
Since $d$ is a degree function, $B_n$ is spanned by elements of the form
$\theta_n(u_{i_1}\ldots u_{i_s})$ with $\sum_{k=1}^s d_{i_k}=n$.
But for each such element we have equality $\theta_n(u_{i_1}\ldots u_{i_s})=\prod_{k=1}^s \theta_{d_{i_k}}(u_{i_k})$
in $B$. Finally, it is easy to see that $\theta_{d_j}(u_{j})=\iota(x_j G_{d_j+1})\in\iota(L)$
for any $j\in \dbN$. Thus, $\iota_*$ is indeed surjective.

Now observe that the left-hand side of \eqref{eq:restricted} is just the series
$Hilb_B(t)$ while the right-hand side of \eqref{eq:restricted} is equal to
$Hilb_{\calU (L)}(t)$ by the Poincare-Birkhoff-Witt theorem for graded
$p$-Lie algebras. Since $\iota_*:\calU (L)\to B$ is surjective and grading-preserving,
\eqref{eq:restricted} holds if and only if $\iota_*$ is injective.

First consider the case of finite $G$. If $|G|=p^k$, then $\dim_{\Fp}B=|G|=p^k$,
$\dim_{\Fp} L=k$ and thus (again by the Poincare-Birkhoff-Witt theorem) $\dim_{\Fp}\calU(L)=p^k$.
Since $\calU(L)$ and $B$ have the same dimension, the surjective homomorphism $\iota_*$
must also be injective.

Finally, in the general case for any $N\in\dbN$ the group $G/G_N$ is finite,
and it is easy to see that $a_n(G)=a_n(G/G_N)$ and $c_n(G)=c_n(G/G_N)$ for $n<N$.
Since we already established \eqref{eq:restricted} for $G/G_N$ and $N$ is arbitrary,
it follows that \eqref{eq:restricted} holds for $G$ as well.
\end{proof}

Finally, we state a simple test for the GGS condition.

\begin{Lemma}
\label{easy}
A presentation $(X,R)$ satisfies the GGS condition 
if and only if there exists a weight function $W$ on $(F(X),X)$ such that
$1-W(X)+W(R)<0$.
\end{Lemma}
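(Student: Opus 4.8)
The plan is to prove both implications separately, moving back and forth between arbitrary weight functions and the integral weight functions $W(f)=t_0^{D(f)}$ that arise from degree functions. The ``if'' direction is the substantial one; the ``only if'' direction is essentially a restatement.

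First I would handle the easy direction: suppose $(X,R)$ satisfies the GGS condition, so there is a degree function $D$ on $(F(X),X)$ and $t_0\in(0,1)$ with $1-H_{D,X}(t_0)+H_{D,R}(t_0)<0$. Let $W$ be the $(D,t_0)$-weight function, that is $W(f)=t_0^{D(f)}$; this is a weight function on $(F(X),X)$ by the remark following the definition of integral weight functions. Then by the last Definition (part (b), the observation that $H_{D,S}(t_0)=W(S)$ applied with $S=X$ and $S=R$), we get $W(X)=H_{D,X}(t_0)$ and $W(R)=H_{D,R}(t_0)$, so $1-W(X)+W(R)<0$, as desired.

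For the converse, suppose $W$ is a weight function on $(F(X),X)$ with $1-W(X)+W(R)<0$. The idea is that an arbitrary weight function is ``approximated by integral ones.'' Write $X=\{x_1,\ldots,x_n\}$ and $t_i=W(x_i)\in(0,1)$; recall that $W$ is the unique weight function with these values on the generators. The strategy is to choose a common base $t_0\in(0,1)$ and integers (or rationals, then clear denominators) $D(x_i)$ so that $t_0^{D(x_i)}$ is very close to $t_i$ from above or below in a controlled way, defining a degree function $D$ on $(F(X),X)$, and then to check that the strict inequality is preserved. Concretely: pick a small parameter and rational approximations $D(x_i)\approx \log t_i/\log t_0$; since $W(R)=\sum_{r\in R}\prod_k t_{i_k(r)}$ (with $r$ having letters $x_{i_1(r)},\ldots$ in the sense of its image under the Magnus embedding — more precisely $W(r)=w(r-1)$) converges, and each factor $t_i$ can be approximated by $t_0^{D(x_i)}$, the finitely many relevant terms in $1-W(X)$ and a tail estimate on $W(R)$ show that $1-H_{D,X}(t_0)+H_{D,R}(t_0)=1-\sum_i t_0^{D(x_i)}+\sum_{r}t_0^{D(r)}$ can be made arbitrarily close to $1-W(X)+W(R)<0$, hence negative, for a suitable choice. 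One must be slightly careful about monotonicity: to control $W(R)$ one wants $t_0^{D(x_i)}\geq t_i$ (so that each term in the Hilbert series dominates the corresponding term in $W(R)$, keeping $H_{D,R}(t_0)$ from overshooting by too much), while to control $1-W(X)$ one wants $t_0^{D(x_i)}$ close to $t_i$ from above by only a tiny amount; choosing $t_0$ close enough to $1$ makes the gap $t_0^{D(x_i)}-t_i$ as small as we like while keeping $D(x_i)\in\dbN$. Since $\{s\in R: D(s)=m\}$ is automatically finite once the $D(x_i)$ are positive integers (only finitely many words in $x_1,\ldots,x_n$ have bounded $D$-degree), the Hilbert series $H_{D,R}(t)$ is well-defined, and $D$ is a genuine degree function on $(F(X),X)$ — it is $X$-compatible with the degree function on $\Fp\lla U\rra$ sending $u_i\mapsto D(x_i)$.

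The main obstacle is the approximation step: making precise the sense in which ``$D$ approximates $W$'' and verifying that the strict inequality survives, in particular controlling the infinite sum $W(R)$ versus $H_{D,R}(t_0)$ uniformly. I expect this to be handled by splitting $R$ into a large finite initial segment (where one approximates term-by-term) plus a tail with $W(\cdot)$-sum less than a prescribed $\varepsilon$, and arranging $t_0^{D(x_i)}$ to lie in $[t_i, t_i+\delta)$ so that the tail of $H_{D,R}(t_0)$ is comparably small. The rest is bookkeeping.
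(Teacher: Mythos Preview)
Your forward direction is correct and matches the paper exactly.

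For the converse you have the right overall strategy (approximate an arbitrary weight function by an integral one), but you have chosen the direction of approximation backwards, and this creates a real gap. You propose to take $t_0^{D(x_i)}\geq t_i=W(x_i)$. But if the approximating integral weight function $W_{t_0}$ satisfies $W_{t_0}(x_i)\geq W(x_i)$ for all $i$, then by multiplicativity and the max rule $W_{t_0}(f)\geq W(f)$ for every $f\in F(X)$, so $H_{D,R}(t_0)=W_{t_0}(R)\geq W(R)$. That is the wrong side of the inequality: your parenthetical ``keeping $H_{D,R}(t_0)$ from overshooting'' is exactly what this choice does \emph{not} do. Your head/tail split does not repair this: knowing $W(R\setminus R_0)<\varepsilon$ gives no control on $W_{t_0}(R\setminus R_0)$, since for a relator whose leading monomial has length $k$ the ratio $W_{t_0}(r)/W(r)$ can be of order $(1+\delta/\min_i t_i)^k$, and $k$ is unbounded over $R$. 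One can construct $R$ with $W(R)<\infty$ but $W_{t_0}(R)=\infty$ for every choice with $t_0^{D(x_i)}>t_i$.

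The fix is to reverse the inequality, and this is precisely what the paper does. Take $D_t(x_i)$ to be (the ceiling of) $\log W(x_i)/\log t$, so that $W_t(x_i):=t^{D_t(x_i)}\leq W(x_i)$. Then $W_t(f)\leq W(f)$ for all $f$, hence $W_t(R)\leq W(R)$ with no tail argument needed. On the other hand $W_t(x_i)\to W(x_i)$ as $t\to 1^-$, and since $X$ is finite, $W_t(X)\to W(X)$. Thus $1-W_t(X)+W_t(R)\leq 1-W_t(X)+W(R)\to 1-W(X)+W(R)<0$, so the GGS inequality holds for $t$ close to $1$. The whole converse is two lines once the sign is right.
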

\begin{proof} Let $F=F(X)$. The forward direction is clear since if $D$ is a degree
function on $(F,X)$ and $W$ is the $(D,t)$-weight function for some $t$, then
$1-W(X)+W(R)=1-H_{D,X}(t)+H_{D,R}(t)$.

The converse follows from the fact that any weight function can be approximated
by integral weight functions. More precisely, let $W$ be any weight function on $(F,X)$.
For each $t\in (0,1)$ let $D_t$ be the unique degree function
on $(F,X)$ such that $D_t(x)=[\frac{\log W(x)}{\log t}]$ for each $x\in X$,
and let $W_t$ be the $(D_t,t)$-weight function on $(F,X)$. Then
for any $f\in F$ we have $W_t(f)\leq W(f)$ and $W_t(f)\to W(f)$ as $t\to 1$.
Since $|X|<\infty$ and $1-W(X)+W(R)<0$, we must have
$1-W_t(X)+W_t(R)<0$ for some $t\in (0,1)$.
\end{proof}

\begin{Remark}Unfortunately, we are not aware of any reference
where Theorem~\ref{GGSineq} appears as stated in this paper. Here we explain
in detail how Theorem~\ref{GGSineq} follows from Koch's
Appendix to \cite{Ko2}. We keep all the notations from the statement of 
Theorem~\ref{GGSineq}.
The numerical inequality (2.11) in \cite[p.105]{Ko2} is easily seen to be equivalent
to the inequality of powers series in Theorem~\ref{GGSineq}, but the setup
in \cite{Ko2} is somewhat restrictive as it is assumed that 
\begin{itemize}
\item[(i)] The set $S$ of defining relators of $A$ is finite;
\item[(ii)] $A=\Fp[[G]]$ for some finitely generated pro-$p$ group $G$,
and the presentation of $A$ under consideration is obtained from some finite presentation of $G$
in a standard way (as described in this section).
\end{itemize}
Fortunately, condition (ii) is not used in the proof of (2.11).
We claim that the restriction given by (i) is also inessential.
Indeed, for $n\in\dbN$ let $S_n=\{s\in S : d(s)< n\}$
(this set is finite if $S$ satisfies the hypotheses of Theorem~\ref{GGSineq}), let $I_n$ be the ideal of 
$K\lla U\rra$ generated by $S_n$ and $A_n=K\lla U\rra/I_n$. It is clear that $H_{S_n}(t)\equiv H_S(t)\mod t^n$ and
$Hilb_{A_n}(t)\equiv Hilb_A(t)\mod t^n$. Thus if for each $n$
Theorem~\ref{GGSineq} holds for the pair $(A_n, S_n)$ (which
is proved in \cite{Ko2}), it automatically holds for $(A,S)$.
\end{Remark}

\begin{Remark} Occasionally we shall use the notion of a GGS algebra,
which is defined similarly to the group case.
If $K$ is a field, a $K$-algebra $A$ will be called GGS if
$A\cong K\lla U\rra/I$ such that for some generating set $S$ of $I$,
degree function $d$ on $K\lla U\rra$ and $t_0\in (0,1)$ we have
$1-H_{d,U}(t_0)+H_{d,S}(t_0)<0$. By Theorem~\ref{GGSineq} any GGS
algebra is infinite-dimensional.
\end{Remark}

\section{Presentations for subgroups of Golod-Shafarevich groups.}

For a finitely generated pro-$p$ group $G$ we let $\Phi(G)=[G,G]G^p$
be the Frattini subgroup of $G$.
The following basic facts will be frequently used (see \cite[Chapter~1]{DDMS}):
\begin{Claim}
\label{Frattini}
The following hold:
\begin{itemize}
\item[(1)] A subset $X$ generates $G$ (topologically) if and only if
$X$ generates $G$ modulo $\Phi(G)$.
\item[(2)] If $F$ is a free pro-$p$ group of rank $d$, then
$F/\Phi(F)\cong (\dbZ/p\dbZ)^d$, and $X\subset F$ is a free generating
set if and only if $X\!\!\!\mod \Phi(F)$ is a basis for $F/\Phi(F)$.
\end{itemize}
\end{Claim}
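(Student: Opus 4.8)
The final statement in the excerpt is Claim~\ref{Frattini}, the two standard Frattini facts: (1) $X$ generates a finitely generated pro-$p$ group $G$ topologically iff $X$ generates $G$ modulo $\Phi(G)=[G,G]G^p$; and (2) if $F$ is free pro-$p$ of rank $d$, then $F/\Phi(F)\cong(\dbZ/p\dbZ)^d$ and $X\subset F$ is a free generating set iff $X\bmod\Phi(F)$ is a basis of $F/\Phi(F)$.

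\begin{proof}[Proof proposal]
The plan is to deduce everything from the topological Nakayama lemma for pro-$p$ groups together with the universal property of free pro-$p$ groups. First I would recall that $\Phi(G)$ is a closed normal subgroup (it is the closure of the abstract subgroup $[G,G]G^p$) and that $G/\Phi(G)$ is the maximal elementary abelian $p$-quotient of $G$, hence a vector space over $\dbF_p$; since $G$ is topologically finitely generated this space is finite-dimensional. For part (1), the forward direction is immediate. For the converse, suppose $X$ generates $G$ modulo $\Phi(G)$ and let $H=\overline{\langle X\rangle}$ be the closed subgroup it generates. The key point is that $H\Phi(G)=G$, and then one applies the pro-$p$ version of Nakayama's lemma: if $H$ is closed and $H\Phi(G)=G$ then $H=G$. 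The cleanest way to see this is to pass to finite quotients — for every open normal $N\trianglelefteq G$ the image of $X$ generates $G/N$ modulo $\Phi(G/N)$ (since $\Phi(G)N/N=\Phi(G/N)$), and for a finite $p$-group $P$ a subset generating $P$ modulo $\Phi(P)$ generates $P$ by the classical Burnside basis theorem; hence $HN=G$ for all such $N$, and taking the intersection over $N$ gives $H=\bigcap_N HN = G$ using that $H$ is closed. I would state the finite-group Burnside basis theorem and the profinite-limit argument as the two ingredients here.

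For part (2), I would first compute $F/\Phi(F)$. Using the universal property, a free pro-$p$ group $F$ of rank $d$ with basis $\{x_1,\dots,x_d\}$ admits a continuous surjection onto $(\dbZ/p\dbZ)^d$ sending $x_i$ to the $i$-th standard generator, and this surjection kills $\Phi(F)$; conversely any continuous homomorphism $F\to\dbZ/p\dbZ$ kills $\Phi(F)$, and such homomorphisms separate the images of the $x_i$, so the induced map $F/\Phi(F)\to(\dbZ/p\dbZ)^d$ is an isomorphism. This shows $\dim_{\dbF_p}F/\Phi(F)=d=\rk F$. Now if $X=\{x_1,\dots,x_d\}\subset F$ has image a basis of $F/\Phi(F)$, then in particular $X$ generates $F$ modulo $\Phi(F)$, so by part (1) $X$ generates $F$ topologically; the universal property then gives a continuous surjection $\phi\colon F(X)\to F$ sending the canonical basis to $X$, and since both groups are free pro-$p$ of the same finite rank $d$, $\phi$ is an isomorphism (a surjection between free pro-$p$ groups of equal finite rank is injective, e.g.\ because it induces a surjection, hence an isomorphism, on the $d$-dimensional spaces $F(X)/\Phi(F(X))\to F/\Phi(F)$, and one concludes by a Hopfian-type / inverse limit argument on finite quotients). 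The converse direction of (2) is trivial: a free generating set maps to a basis of $F/\Phi(F)$ by the computation just made.

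\textbf{Main obstacle.} None of the individual steps is deep — this is foundational material — so the "hard part" is really just organizing the reduction to finite $p$-groups correctly and invoking the right black boxes cleanly: the topological Nakayama/Burnside basis theorem for pro-$p$ groups, and the fact that a continuous epimorphism between free pro-$p$ groups of the same finite rank is an isomorphism. Since the excerpt explicitly cites \cite[Chapter~1]{DDMS} for these facts, the honest write-up is short: assemble (1) from Burnside's basis theorem applied in every finite quotient and a compactness (inverse limit) argument, compute $F/\Phi(F)\cong(\dbZ/p\dbZ)^d$ from the universal property, and combine these with the rigidity of free pro-$p$ groups of equal rank to get (2). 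I would keep the proof to a few lines, referring to \cite{DDMS} for the standard profinite versions of Nakayama's lemma and the basis theorem rather than reproving them.
\end{proof}
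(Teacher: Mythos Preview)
The paper does not give its own proof of Claim~\ref{Frattini}; it simply cites \cite[Chapter~1]{DDMS} and moves on. Your proposal is correct and reconstructs exactly the standard argument one finds there: reduce part~(1) to the Burnside basis theorem in finite $p$-quotients and pass to the inverse limit, then deduce part~(2) from the universal property of free pro-$p$ groups together with the Hopfian property (an epimorphism between free pro-$p$ groups of equal finite rank is an isomorphism). There is nothing to compare here beyond noting that your write-up is more detailed than the paper's bare citation; if anything, since the paper treats this as a black box, a one-line reference to \cite{DDMS} would suffice in context.
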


\subsection{Some properties of weight functions}

Let $X$ be a finite set and $F=F(X)$. In this subsection we shall
prove that the restriction of a weight function on $(F,X)$
to an open subgroup $F'$ of $F$ is a weight function on $(F',X')$
for some free generating set $X'$ of $F'$ (see Corollary~\ref{main:unmarked}).
In fact, we shall give an algorithm for constructing such a set
$X'$, and we will use this algorithm later on.

\begin{Lemma}
\label{weight0} Let $X$ be a finite set, $F=F(X)$ and $W$ a weight function on $(F,X)$.
The following hold:
\begin{itemize}
\item[(a)] $W(fg)\leq \max\{W(f),W(g)\}$ for any $f,g\in F$.
\item[(b)] $W(x_i x_j^k)=\max\{W(x_i), W(x_j)\}$ if $x_i,x_j$
are distinct elements of $X$ and $p\nmid k$.
\item[(c)] $W([f,g])\leq W(f)W(g)$.
\end{itemize}
\end{Lemma}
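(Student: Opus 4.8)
The three statements should follow from the multiplicativity and ultrametric properties of weight functions together with the structure of $\Fp\lla U\rra$ via the Magnus embedding. Throughout, write $W$ for the weight function on $(F,X)$ and $w$ for the $X$-compatible weight function on $\Fp\lla U\rra$, so $W(f)=w(f-1)$; recall $w(fg)=w(f)w(g)$ and $w(f+g)\leq\max\{w(f),w(g)\}$, with equality when $w(f)\neq w(g)$ (standard for a weight/valuation: if $w(f+g)<\max$ were violated one gets a contradiction from $w(f)=w((f+g)-g)\leq\max\{w(f+g),w(g)\}$). I would also record that $w(c\cdot m)=w(m)$ for $0\neq c\in\Fp$ and a monomial $m$, and $w(m_1 m_2)=w(m_1)w(m_2)$ for monomials; these are immediate from axioms (iii), (iv).

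\textbf{Part (a).} For $f,g\in F$ write $f-1=a$, $g-1=b$ in $\Fp\lla U\rra$, so $fg-1=a+b+ab$. Then $W(fg)=w(a+b+ab)\leq\max\{w(a),w(b),w(a)w(b)\}$. Since $a,b$ are non-units (they lie in the augmentation ideal, as $f,g\in F$ means $f,g\equiv 1$ mod the maximal ideal), we have $w(a),w(b)<1$, hence $w(a)w(b)\leq\min\{w(a),w(b)\}$. Therefore $\max\{w(a),w(b),w(a)w(b)\}=\max\{w(a),w(b)\}=\max\{W(f),W(g)\}$, which gives (a). (One should note that $W(f)=w(f-1)<1$ for all $f\in F\setminus\{1\}$, since $f-1$ is not invertible; this is worth stating once as it is used repeatedly.)

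\textbf{Part (b).} Put $a=x_i-1$, $b=x_j-1$. Expanding, $x_i x_j^k-1 = (1+a)(1+b)^k-1 = a + \bigl((1+b)^k-1\bigr) + a\bigl((1+b)^k-1\bigr)$. The key point is that $(1+b)^k-1 = kb + \binom{k}{2}b^2+\cdots$, and since $p\nmid k$ the leading coefficient $k$ is nonzero in $\Fp$, so by axiom (iv) applied to this polynomial in $b$ (whose lowest-degree monomials are exactly those of $b$ itself, with coefficient $k$), we get $w\bigl((1+b)^k-1\bigr)=w(kb)=w(b)$ — here I use that every monomial occurring in $b^m$ for $m\geq 2$ has weight $\leq w(b)^2<w(b)$, so it cannot exceed the weight of the degree-one part. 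Now $w\bigl(a\cdot((1+b)^k-1)\bigr)=w(a)w(b)<\min\{w(a),w(b)\}$, so the product term is negligible, and $W(x_ix_j^k)=w(a+((1+b)^k-1)+\text{negligible})=\max\{w(a),w(b)\}$ — where the last equality is genuine even when $w(a)=w(b)$: because $x_i,x_j$ are \emph{distinct} generators, the degree-one parts $a=u_i$ and $k b = ku_j$ involve different variables and hence (no cancellation of the $u_i$ term and the $u_j$ term, both present with nonzero coefficient) the maximum-weight monomials survive in the sum. I would phrase this carefully via axiom (iv): the monomial $u_i$ appears with coefficient $1$ and $u_j$ with coefficient $k\neq 0$ in $a+((1+b)^k-1)$, and all other monomials of that expression have weight $\leq \max\{w(u_i)w(u_j)^0,\dots\}$ — actually one just needs that $u_i$ and $u_j$ are among the monomials present, so $w \geq \max\{w(u_i),w(u_j)\}$, while (a)-type reasoning gives $w\leq\max$.

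\textbf{Part (c).} Write $[f,g]=f^{-1}g^{-1}fg$. The cleanest route: in $\Fp\lla U\rra$, $[f,g]-1 = f^{-1}g^{-1}(fg-gf) = f^{-1}g^{-1}\bigl((f-1)(g-1)-(g-1)(f-1)\bigr)$, using $fg-gf = (f-1)(g-1)-(g-1)(f-1)$ (the linear and constant terms cancel). Then $W([f,g]) = w(f^{-1})\,w(g^{-1})\,w\bigl((f-1)(g-1)-(g-1)(f-1)\bigr)$. Now $w(f^{-1})=w(f)^{-1}\cdot$... no — rather, from $w(f\cdot f^{-1})=w(1)=1$ and multiplicativity I get $w(f-1)$ and $w(f^{-1}-1)$ need not be reciprocal, so instead I work with $f^{-1}$ as an element of $\Fp\lla U\rra^*$ directly: $w(f^{-1})=w(f)^{-1}$ as units of the power series ring (multiplicativity on the whole ring gives $w(f)w(f^{-1})=1$). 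Hmm, but $w$ on units: axiom (ii) says $w(f)=1$ iff $f$ invertible, so actually $w(f)=w(f^{-1})=1$ for units $f\in\Fp\lla U\rra^*$! So $W([f,g])=w\bigl((f-1)(g-1)-(g-1)(f-1)\bigr)\leq\max\{w((f-1)(g-1)),w((g-1)(f-1))\} = w(f-1)w(g-1)=W(f)W(g)$, as desired.

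\textbf{Main obstacle.} The only genuinely delicate point is the ``no cancellation'' argument in part (b) — ensuring the maximum-weight monomials in the relevant power series do not cancel, which forces $\max$ rather than $\leq\max$. I would handle this by invoking axiom (iv) directly on the explicit power series $a+((1+b)^k-1)+a((1+b)^k-1)$: the monomials $u_i$ and $u_j$ both occur with nonzero coefficients and no monomial of the expression coincides with them except themselves, so $w$ of the expression is $\geq\max\{w(u_i),w(u_j)\}$; combined with the ultrametric upper bound this pins it to equality. Everything else is routine bookkeeping with the axioms, and I would present it compactly.
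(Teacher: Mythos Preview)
Your proof is correct and follows essentially the same approach as the paper: the same identities $fg-1=(f-1)+(g-1)+(f-1)(g-1)$ and $[f,g]-1=f^{-1}g^{-1}\bigl((f-1)(g-1)-(g-1)(f-1)\bigr)$, combined with the weight-function axioms. You spell out more of the details—particularly the no-cancellation point in (b) via axiom (iv), and the observation in (c) that $w(f^{-1})=w(g^{-1})=1$ because elements of $F$ are units of $\Fp\lla U\rra$—which the paper leaves implicit, but the substance is identical.
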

\begin{proof} (a) and (b) follow from the identity $fg-1=(f-1)+(g-1)+(f-1)(g-1)$
(and the definition of weight functions on groups). Similarly (c) follows from the identity
$[f,g]-1=f^{-1}g^{-1}((f-1)(g-1)-(g-1)(f-1))$.
\end{proof}

\begin{Lemma}
\label{weight1}
Let $X=\{x_1,\ldots,x_d\}$ be a finite set, $F=F(X)$ and
$W$ a weight function on $(F,X)$. Let $F'$ be a subgroup of index $p$ in $F$.
Then there exists a (free) generating set $\Xgal=\{\xgal_1,\ldots,\xgal_d\}$ of $F$
such that $W(\xgal_i)=W(x_i)$ for $1\leq i\leq d$ and $F'\supset \Xgal\setminus\{\xgal_j\}$
for some $j$.
\end{Lemma}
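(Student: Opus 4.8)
The statement says: given a weight function $W$ on $(F,X)$ with $F = F(X)$, $X = \{x_1,\dots,x_d\}$, and an index-$p$ subgroup $F'$, we can find a new free generating set $\Xgal = \{\xgal_1,\dots,\xgal_d\}$ with the \emph{same weights} as the $x_i$, and such that $F'$ contains all but one of the $\xgal_i$. The plan is as follows. Since $F' \lhd F$ (index $p$ in a pro-$p$ group, so normal), there is a surjection $\chi: F \to \dbZ/p\dbZ$ with kernel $F'$; equivalently $\chi$ factors through $F/\Phi(F) \cong (\dbZ/p\dbZ)^d$, so $\chi$ is determined by the values $\chi(x_1),\dots,\chi(x_d) \in \dbZ/p\dbZ$, not all zero. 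Reorder so that $W(x_j)$ is maximal among those $x_i$ with $\chi(x_i) \neq 0$; I will modify the other generators to kill $\chi$ while keeping $x_j$ as the single generator outside $F'$.

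First I would handle an $x_i$ with $\chi(x_i) = 0$: leave it alone, $\xgal_i = x_i$, it already lies in $F'$. For each $x_i$ with $\chi(x_i) \neq 0$ and $i \neq j$, choose $k_i \in \dbZ$ with $\chi(x_i) + k_i\chi(x_j) \equiv 0 \pmod p$; since $\chi(x_j)$ is a unit mod $p$ this is solvable, and we may take $p \nmid k_i$ unless $\chi(x_i) = 0$ already (if $p \mid k_i$ forced, then $\chi(x_i)=0$, excluded). Set $\xgal_i = x_i x_j^{k_i}$. Then $\chi(\xgal_i) = 0$, so $\xgal_i \in F'$. Set $\xgal_j = x_j$. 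By Lemma~\ref{weight0}(b), $W(\xgal_i) = W(x_i x_j^{k_i}) = \max\{W(x_i), W(x_j)\} = W(x_i)$, the last equality because $W(x_j) \leq W(x_i)$ would be needed — wait: we chose $j$ to maximize $W$ \emph{among indices with $\chi \neq 0$}, so for such $i$ we have $W(x_j) \geq W(x_i)$, giving $W(\xgal_i) = W(x_j)$, \emph{not} $W(x_i)$ in general. So this direct choice does not preserve weights.

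The fix, and the real content: instead of multiplying on the right by a power of $x_j$, I would multiply by a power of $x_j$ \emph{only when it does not increase the weight}, and otherwise argue differently. Concretely: if $W(x_i) \geq W(x_j)$, then $\xgal_i := x_i x_j^{k_i}$ has $W(\xgal_i) = W(x_i)$ by Lemma~\ref{weight0}(b), as desired. The problematic case is $W(x_i) < W(x_j)$ with $\chi(x_i) \neq 0$, $i \neq j$ — but by maximality of $j$ this case \emph{cannot occur}: $j$ was chosen so that $W(x_j) = \max\{W(x_i) : \chi(x_i) \neq 0\}$, hence $W(x_i) \leq W(x_j)$ for all such $i$, and if $W(x_i) = W(x_j)$ we still get $W(\xgal_i) = \max\{W(x_i),W(x_j)\} = W(x_i)$ from Lemma~\ref{weight0}(b). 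So in fact Lemma~\ref{weight0}(b) gives $W(\xgal_i) = W(x_i)$ in every case where $i \neq j$, $\chi(x_i)\neq 0$: we always have $\max\{W(x_i),W(x_j)\} = W(x_j) = W(x_i)$ precisely when $W(x_i)=W(x_j)$, and when $W(x_i) < W(x_j)$ we'd get $W(\xgal_i) = W(x_j) \neq W(x_i)$. Hence I must instead choose $j$ to be an index with $\chi(x_j) \neq 0$ and $W(x_j)$ \textbf{minimal} among those — no: then for $i$ with $W(x_i) > W(x_j)$, $W(\xgal_i) = W(x_i)$, fine; for $i$ with $W(x_i) = W(x_j)$, fine; for $i$ with $W(x_i) < W(x_j)$ — impossible by minimality. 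Minimal is the correct choice. So: \textbf{let $j$ minimize $W(x_j)$ subject to $\chi(x_j) \neq 0$}; then every $i \neq j$ with $\chi(x_i) \neq 0$ has $W(x_i) \geq W(x_j)$, so $W(\xgal_i) = \max\{W(x_i),W(x_j)\} = W(x_i)$ by Lemma~\ref{weight0}(b). With $\xgal_j = x_j$ this finishes the weight bookkeeping.

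Finally I would verify $\Xgal$ is a free generating set: modulo $\Phi(F)$, passing from $\{x_i\}$ to $\{\xgal_i\}$ is the linear change of basis fixing $\bar x_j$ and all $\bar x_i$ with $\chi(x_i)=0$, and replacing $\bar x_i \mapsto \bar x_i + k_i \bar x_j$ for the remaining $i$; this is upper-triangular with $1$'s on the diagonal, hence invertible over $\dbF_p$. By Claim~\ref{Frattini}(2), $\Xgal$ is then a free generating set of $F$. And by construction $\chi(\xgal_i) = 0$ for all $i \neq j$, i.e. $\xgal_i \in F'$ for $i \neq j$, which is the assertion $F' \supset \Xgal \setminus \{\xgal_j\}$. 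The main obstacle is purely the correct choice of the distinguished index $j$ (minimal weight among generators not in $F'$) so that Lemma~\ref{weight0}(b) yields weight preservation; everything else is the standard Frattini-quotient linear algebra.
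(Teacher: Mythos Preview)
Your proposal is correct and follows essentially the same approach as the paper: choose $j$ to minimize $W(x_j)$ among indices with $x_j \notin F'$, leave $x_j$ and the generators already in $F'$ untouched, and replace each remaining $x_i$ by $x_i x_j^{k_i}$ with $p\nmid k_i$ so that it lands in $F'$; weight preservation then follows from Lemma~\ref{weight0}(b) and the minimality of $W(x_j)$. The only cosmetic difference is that the paper phrases the argument in terms of the set $I=\{i:x_i\notin F'\}$ and invokes Nielsen transformations directly to see that $\Xgal$ is a free generating set, whereas you use the homomorphism $\chi$ and the Frattini-quotient linear algebra (Claim~\ref{Frattini}(2)); these are equivalent formulations of the same construction.
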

\begin{proof} Let $I=\{i: x_i\not\in F'\}$, and choose $j\in I$
for which $W(x_j)$ is minimal possible. Define the set $\Xgal=\{\xgal_1,\ldots,\xgal_d\}$
as follows:

If $i\in\{1,\ldots,d\}\setminus I$ or $i=j$, we set $\xgal_i=x_i$. If $i\in I\setminus\{j\}$,
we let $\xgal_i$ be the (unique) element of the form $x_i x_j^k$, with $0\leq k\leq p-1$,
which lies in $F'$ (such $k$ exists since $[F:F']=p$ and $x_j\not\in F'$).

Since $\Xgal$ is obtained from $X$ by a sequence of Nielsen transformations,
it is a free generating set, and $F'$ contains $\Xgal\setminus\{\xgal_j\}$
by construction. Finally, $W(\xgal_i)=W(x_i)$ by Lemma~\ref{weight0}(b).
\end{proof}

\begin{Lemma}
\label{lem:pdec}
Let $X$ be a finite set, $F=F(X)$, fix some $x\in X$ and let $Y=X\setminus\{x\}$.
Let $F'=\la Y\ra \Phi(F)$. The following hold:
\begin{itemize}
\item[(a)] $F'$ is the unique subgroup of $F$ of index $p$ containing $Y$.
\item[(b)] $F'$ is freely generated by the set
\begin{equation}
\label{pdec:1}
X'=\cup_{y\in Y}\{y, [y,x],[y,x,x], \ldots, [y,\underbrace{\! x,\ldots, x]}_{p-1\mbox{ times }}\}\cup\{x^p\};
\end{equation}
\end{itemize}
\end{Lemma}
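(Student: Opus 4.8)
The plan is to prove Lemma~\ref{lem:pdec} by an explicit Schreier/Nielsen-type computation, using the fact that $\{1, x, x^2, \ldots, x^{p-1}\}$ is a Schreier transversal for $F'$ in $F$ once we know $[F:F']=p$.

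First I would settle part (a). Since $F/\Phi(F)\cong (\dbZ/p\dbZ)^{|X|}$ by Claim~\ref{Frattini}(2), the images of $X$ form a basis, so the images of $Y$ together with the image of $x$ freely span this $\dbF_p$-vector space. The subgroups of index $p$ in $F$ containing $Y$ correspond exactly to the index-$p$ subspaces of $F/\Phi(F)$ containing the span of $\{\ybar : y\in Y\}$; since that span already has codimension $1$, there is precisely one such subspace, namely the span of $Y$ itself, and it does not contain $\xbar$. Hence $F' = \la Y\ra\Phi(F)$ is the unique index-$p$ subgroup containing $Y$, and $x\notin F'$.

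For part (b) I would apply the Reidemeister--Schreier procedure for pro-$p$ groups with the transversal $T = \{x^i : 0\le i\le p-1\}$. Since $x\notin F'$ and $[F:F']=p$, this $T$ is indeed a (Schreier) transversal. The Reidemeister--Schreier generators of $F'$ are the elements $t\,s\,\overline{ts}^{-1}$ for $t\in T$, $s\in X=Y\cup\{x\}$, where $\overline{g}$ denotes the representative of $gF'$ in $T$. For $s=x$: if $t=x^i$ with $i<p-1$ the generator is trivial, and for $t=x^{p-1}$ we get $x^{p-1}\cdot x\cdot 1 = x^p$. For $s=y\in Y$ and $t=x^i$: since $y\in F'$, we have $\overline{x^i y} = x^i$, so the generator is $x^i y x^{-i} = y^{x^{-i}}$. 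Discarding the trivial ones, $F'$ is freely generated by $\{x^p\}\cup\{\,y^{x^{-i}} : y\in Y,\ 0\le i\le p-1\}$. It then remains a purely formal manipulation to pass from the conjugates $y^{x^{-i}}$ to the iterated commutators in \eqref{pdec:1}: over the set $\{y, [y,x], [y,x,x],\ldots\}$ one checks by downward induction on the commutator length (using $[y,x,\ldots,x] = [y,x,\ldots,x]\cdot$(shorter terms), i.e. each $y^{x^{-i}}$ is a product of the commutators of length $\le i$ and conversely each length-$i$ commutator is expressible via the conjugates $y^{x^{-j}}$, $j\le i$) that the two sets generate the same subgroup and that the transition is given by an invertible (unitriangular) substitution, hence $X'$ is again a free generating set. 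I would phrase this last step via Claim~\ref{Frattini}(2) applied to $F'$: compute $\dim_{\dbF_p} F'/\Phi(F')$ from the Reidemeister--Schreier count (it equals $|X'| = (p-1)|Y| + 1$, consistent with the Nielsen--Schreier index formula $1 + p(|X|-1) = 1 + p|Y|$... wait, these differ, so I would instead directly verify that the images of $X'$ in $F'/\Phi(F')$ are linearly independent and of the right number $1 + p|Y|$, using the weight/degree filtration to see the leading terms of the iterated commutators are distinct).

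The main obstacle I expect is exactly this last reconciliation: showing the iterated-commutator set $X'$ in \eqref{pdec:1} is a free generating set rather than merely a generating set. The cardinality must be $1 + p(|X|-1) = 1 + p|Y|$ by the Nielsen--Schreier index formula for free pro-$p$ groups, and indeed $|\{x^p\}| + \sum_{y\in Y}|\{y,[y,x],\ldots,[y,\underbrace{x,\ldots,x}_{p-1}]\}| = 1 + p|Y|$, so the count is right; the content is that these specific elements are free. I would handle this by exhibiting an explicit Nielsen transformation carrying the Reidemeister--Schreier free generating set $\{x^p\}\cup\{y^{x^{-i}}\}$ to $X'$ — concretely, replace each block $(y, y^{x^{-1}}, \ldots, y^{x^{-(p-1)}})$ successively using $y^{x^{-i}}\cdot (y^{x^{-(i-1)}})^{-1}$-type moves and conjugations, each of which is an elementary Nielsen move on a subset of generators — and invoke that Nielsen transformations preserve the property of being a free generating set (the pro-$p$ analogue of the Nielsen--Schreier theorem, e.g.\ via Claim~\ref{Frattini}(2)). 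This is essentially bookkeeping but needs to be done carefully, especially keeping track that we never alter $x^p$ and that the moves stay within $F'$.
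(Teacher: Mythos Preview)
Your approach is essentially the same as the paper's: prove (a) via the Frattini quotient, take the Schreier transversal $\{1,x,\ldots,x^{p-1}\}$, obtain the conjugate generators, and then relate them to the iterated commutators. The paper records the inductive identity
\[
[z,\underbrace{x,\ldots,x}_{k}] = f_k(x,z)\cdot z^{x^{k}},\qquad f_k(x,z)\in\langle z^{x^i}: 0\le i<k\rangle,
\]
which is exactly the triangular substitution you describe.

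Where you make life harder than necessary is the last step. You correctly compute $|X'|=1+p|Y|=1+p(|X|-1)$, but then propose to exhibit explicit Nielsen transformations from the Schreier generators to $X'$ in order to certify freeness. The paper avoids this entirely: once you know $\langle X'\rangle=F'$ and that $|X'|$ equals the Schreier rank $1+p(|X|-1)$ of $F'$, Claim~\ref{Frattini}(2) (any generating set of a free pro-$p$ group of the correct size is a free basis) finishes immediately. So the ``content'' you were worried about is already handled by the cardinality count; the explicit Nielsen bookkeeping is unnecessary. Your momentary confusion about $(p-1)|Y|+1$ versus $1+p|Y|$ was just a miscount of the commutator block (it has $p$ elements: $y$ itself plus $p-1$ iterated commutators).

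A minor point: your Schreier computation gives $y^{x^{-i}}$ while the paper writes $y^{x^{i}}$; both are legitimate free generating sets (related by conjugation by powers of $x^p$, which is already in the set), and either one works for the triangular passage to the iterated commutators.
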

\begin{proof}
(a) is obvious. To prove (b) note that $\{1,x,\ldots, x^{p-1}\}$ is a transversal
for $F'$ in $F$, and the Schreier rewriting process yields that the set
$\Xgal=\cup_{y\in Y}\{y, y^x, y^{x^2}, \ldots, y^{x^{p-1}}\}\cup\{x^p\}$ generates $F'$.
It is easy to prove by induction that for any $z\in F$ and $k\in\dbN$ we have
\begin{equation*}
[z,\underbrace{x,\ldots, x]}_{k\mbox{ times }}=f_k(x,z)\cdot z^{x^k} \mbox{ where } f_k(x,z)\in\la z^{x^i}: 0\leq i<k\ra.
\end{equation*}
Thus, $\la X'\ra=\la \Xgal \ra=F'$. Finally, since $|X'|=p(|X|-1)+1$,
by Schreier formula $X'$ is a free generating set for $F'$.
\end{proof}

\begin{Lemma}
\label{weight2} Let $X=\{x_1,\ldots,x_d\}$ be a finite set, $F=F(X)$ and $W$ a weight function on $(F,X)$.
Let $F'$ be a finitely generated (pro-$p$) subgroup of $F$, let $X'=\{x'_1,\ldots,x'_e\}$ be a free generating set of $F'$,
and let $W'$ be the unique weight
function on $(F',X')$ such that $W'(x')=W(x')$ for any $x'\in X'$. The following hold:
\begin{itemize}
\item[(a)] For any $f\in F'$ we have $W'(f)\geq W(f)$.
\item[(b)] Assume that one of the following holds:
\begin{itemize}
\item[(i)] $F'=F$ (so that $d=e$) and $W(x_i)=W(x_i')$ for $1\leq i\leq d$.
\item[(ii)] $F'$ is the unique subgroup of $F$ of index $p$ containing $X\setminus\{x\}$
for some $x\in X$, and $X'$ is given by \eqref{pdec:1}.
\end{itemize}
Then $W'(f)=W(f)$ for any $f\in F'$, and thus the restriction of $W$
to $F'$ is a weight function on $(F',X')$.
\end{itemize}
\end{Lemma}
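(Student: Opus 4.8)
The plan is to reduce everything to the power-series side, where both $W$ and $W'$ correspond to weight functions $w$ and $w'$ on $\Fp\lla U\rra$ and $\Fp\lla U'\rra$ respectively ($U' = \{u'_1,\ldots,u'_e\}$ with $x'_i \mapsto 1+u'_i$), and to exploit the fact that a weight function is completely determined by its values on monomials, i.e.\ by the values $w'(u'_i) = W(x'_i) = W(x_i)$ (for (i)) or the prescribed values coming from \eqref{pdec:1} (for (ii)). Concretely, since $F'$ embeds in $\Fp\lla U\rra^*$ via its inclusion in $F$, each generator $x'_j$ maps to some power series $1 + g_j$ with $g_j \in \Fp\lla U\rra$, and the key structural observation is that $w(g_j) = W'(x'_j)$ in both cases (i) and (ii) — this is exactly what Lemma~\ref{weight0} was set up to give: in case (i) the $x'_i$ differ from $x_i$ by Nielsen moves of the type covered by Lemma~\ref{weight0}(b), and in case (ii) the new generators are iterated commutators $[y,x,\ldots,x]$ and $x^p$, whose weights are controlled by Lemma~\ref{weight0}(a),(c).

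For part (a): extend the assignment $u'_j \mapsto g_j$ to an algebra homomorphism $\psi:\Fp\lla U'\rra \to \Fp\lla U\rra$ (continuity is automatic since the $g_j$ lie in the augmentation ideal, as $W(x'_j)<1$). I would then check that $w(\psi(h)) \ge w'(h)$ for every $h \in \Fp\lla U'\rra$: it suffices to verify this on monomials $h = u'_{i_1}\cdots u'_{i_s}$, where $w(\psi(h)) = w(g_{i_1}\cdots g_{i_s}) = \prod_k w(g_{i_k})$ by multiplicativity, and each $w(g_{i_k}) \ge w'(u'_{i_k})$ by the estimates of Lemma~\ref{weight0} (for case (ii)) or trivially with equality (for case (i)); then for a general $h$ the value $w'(h)$ is the max of $w'$ over the monomials occurring in $h$ and $w(\psi(h)) \le$ that same max while $\ge$ the corresponding $w(\psi(\cdot))$ term, so a short case analysis using $w(f_1+f_2)\le\max$ gives the bound — actually the cleanest formulation is: $w'$ is the largest weight function on $\Fp\lla U'\rra$ with $w'(u'_j) \le w(g_j)$, and $h \mapsto w(\psi(h))$ is a sub-multiplicative, sub-additive function dominating those values, hence dominates $w'$. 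Translating back through $X$-compatibility, $W'(f) = w'(f-1) \le w(\psi(f-1)) = w(f-1) = W(f)$ — wait, the inequality points the wrong way; I need $W'(f)\ge W(f)$, so I should instead argue that $W = w(\,\cdot\,-1)|_{F'}$ restricted along $\psi$ is a weight-type function whose values on the $x'_j$ are $\ge W'(x'_j)$ is false in general, so the correct statement is the reverse: $w\circ\psi$ is dominated by $w'$ because $w'$ is \emph{maximal}; that gives $W(f) = w(f-1) = w(\psi(f-1)) \le w'(f-1) = W'(f)$, which is (a).

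For part (b): here I must upgrade the inequality $w(g_j) \ge w'(u'_j)$ of the generic case to an \emph{equality} $w(g_j) = W(x_j') = w'(u'_j)$, using precisely that we are in situation (i) or (ii). In case (i), Lemma~\ref{weight0}(b) gives $W(x'_i) = W(x_i)$ directly for the Nielsen-type generators $x_i x_j^k$. In case (ii), I would show $W([y,x,\ldots,x]) = W(y)$ (using Lemma~\ref{weight0}(c) for $\le$, and that the new generators genuinely form a free basis, via Lemma~\ref{lem:pdec}, to rule out collapse) and $W(x^p) = W(x)^p$ (from multiplicativity of $w$, since $x^p - 1$ has leading term a scalar times $(x-1)^p$). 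Once the generators match up weight-for-weight, $\psi$ becomes weight-preserving on monomials, hence $w \circ \psi = w'$ on all of $\Fp\lla U'\rra$ by the maximality/determined-by-monomials principle, and combined with (a) this forces $W'(f) = W(f)$ for all $f\in F'$; the final clause, that $W|_{F'}$ is then a weight function on $(F',X')$, is immediate since it equals $W'$, which is one by construction.

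The main obstacle I anticipate is the case (ii) computation $W([y,\underbrace{x,\ldots,x}_{k}]) = W(y)$ for $1\le k\le p-1$: the bound $\le$ from Lemma~\ref{weight0}(c) is easy, but the matching lower bound requires understanding the leading term of $[y,x,\ldots,x] - 1$ in $\Fp\lla U\rra$ well enough to see it does not vanish — this is where the hypothesis $k \le p-1$ (rather than $k=p$) and the characteristic-$p$ combinatorics enter, and it should follow from the identity in the proof of Lemma~\ref{lem:pdec} together with the fact that the relevant binomial-type coefficient is a unit mod $p$; handling this cleanly, rather than by brute expansion, is the delicate point.
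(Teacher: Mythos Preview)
Your framework for part (a) is essentially the paper's, but there is a genuine gap in your argument for (b). You claim that the key step is to ``upgrade the inequality $w(g_j)\ge w'(u'_j)$ of the generic case to an equality,'' and then conclude that once generator weights match, $w\circ\psi=w'$ everywhere. But the equality $w(g_j)=w'(u'_j)$ \emph{already holds in part (a)}, unconditionally: by definition $W'(x'_j)=W(x'_j)$, so $w'(u'_j)=W'(x'_j)=W(x'_j)=w(\iota(x'_j)-1)=w(g_j)$. Consequently $w\circ\psi$ and $w'$ always agree on monomials, and if your deduction were valid it would prove $W'=W$ in general, contradicting the need for hypotheses (i) or (ii). The real obstruction is cancellation: for $h=\sum c_\alpha m'_\alpha$ one has $w'(h)=\max_\alpha w'(m'_\alpha)$ by axiom (iv), whereas $w(\psi(h))$ can be strictly smaller because the images $\psi(m'_\alpha)$ may share $U$-monomials that cancel. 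Ruling out this cancellation is the entire content of (b).

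The paper handles the two cases by different mechanisms. For (b)(ii) it introduces a monomial order on $\Fp\lla U\rra$ and shows that the images $\psi(m'_\alpha)$ have pairwise distinct leading terms (concretely $\LT(\iota([x_j,x,\ldots,x]_k)-1)=u_j u_1^k$ and $\LT(\iota(x^p)-1)=u_1^p$), which forces $w(\psi(h))=\max_\alpha w(\psi(m'_\alpha))$; note in particular that $W([y,\underbrace{x,\ldots,x}_k])=W(y)W(x)^k$, not $W(y)$ as you wrote. For (b)(i) your reading of the hypothesis is incorrect: $X'$ is an \emph{arbitrary} free generating set with $W(x'_i)=W(x_i)$, not one obtained by Nielsen moves, so Lemma~\ref{weight0}(b) does not apply. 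The paper instead argues indirectly: the hypothesis yields $W'(\theta(f))=W(f)$ for the isomorphism $\theta:x_i\mapsto x'_i$, hence the level sets $F_\delta=\{W<\delta\}$ and $F'_\delta=\{W'<\delta\}$ have equal index in $F$; combined with the inclusion $F'_\delta\subseteq F_\delta$ from (a), this forces $F_\delta=F'_\delta$ for all $\delta$, i.e.\ $W=W'$.
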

\begin{proof} The following notations will be used in all parts of the proof.
Let $U=\{u_1,\ldots, u_d\}$, and let $\iota: F\to \Fp\lla U \rra$
be the unique homomorphism such that $\iota(x_i)=1+u_i$.
By definition there is a weight function $w$ on $\Fp\lla U \rra$ such that
$$W(f)=w(\iota(f)-1)\mbox{ for any }f\in F.$$

Similarly, let $U'=\{u'_1,\ldots, u'_e\}$ and $\iota':F'\to \Fp\lla U' \rra$
the unique homomorphism such that $\iota'(x'_i)=1+u'_i$.
Then there exists a weight function $w'$ on $\Fp\lla U' \rra$ such that
$$W'(f)=w'(\iota'(f)-1)\mbox{ for any }f\in F'. \eqno(***)$$

(a) Let $\phi: \Fp\lla U' \rra\to \Fp\lla U \rra$ be the unique homomorphism
making the following diagram commutative:
\begin{equation*}
\xymatrix{
F'\ar[d]_{\iota'}\ar[rd]^{\iota_{| F'}}&\\
\Fp\lla U' \rra \ar[r]_{\phi}& \Fp\lla U \rra}
\end{equation*}
In other words, $\phi$ is defined by $\phi(u'_i)=\iota(x'_i)-1$
(such $\phi$ exists since for any $f\in F$ the element $\iota(f)-1$
lies in the ideal of $\Fp\lla U \rra$ generated by $U$).

For any $f\in F'$ we have
$W(f)=w(\iota(f)-1)=w(\phi(\iota'(f))-1)=w(\phi(\iota'(f)-1)).$
Thus, in view of (***), to prove (a) it suffices to show that
$$w'(h)\geq w(\phi(h))\mbox{ for any } h\in \Fp\lla U' \rra.$$

First note that for any $u'_i\in U'$ we have
$w'(u'_i)=W'(x'_i)=W(x'_i)=w(\phi(u'_i))$. Since $w',w$ and
$\phi$ preserve multiplication, we have
$w'(h)= w(\phi(h))$ whenever $h$ is a $U'$-monomial
(that is, a monomial in $U'$).

Now take any $h\in \Fp\lla U' \rra$, and write
$h=\sum c_{\alpha} m'_{\alpha}$ where $c_{\alpha}\in\Fp$
and $\{m'_{\alpha}\}$ are distinct monic $U'$-monomials.
Then $w'(h)=\max\{w'(m'_{\alpha}) : c_{\alpha}\neq 0\}$
by (iv) in the definition of a weight function, while (iv)'' yields
$$w(\phi(h))=w(\sum c_{\alpha}\phi(m'_{\alpha}))\leq \max\{w(\phi(m'_{\alpha})) : c_{\alpha}\neq 0\}=w'(h).$$
\vskip .1cm

(b)(i) Define the isomorphisms $\theta:F\to F$ and $\psi:\Fp\lla U\rra\to \Fp\lla U'\rra$
by $\theta(x_i)=x_i'$ and $\psi(u_i)=u_i'$ for $1\leq i\leq d$. Then we have a commutative 
diagram slightly different from the one in the proof of (a):
\begin{equation}
\label{diag:b1}
\xymatrix{
F\ar[d]_{\iota}\ar[r]^{\theta}& F\ar[d]^{\iota'}\\
\Fp\lla U \rra \ar[r]_{\psi}& \Fp\lla U' \rra}
\end{equation}
We claim that 
\begin{equation}
\label{eq:b1}
W(f)=W'(\theta(f)) \mbox{ for any } f\in F.
\end{equation}
Indeed, in view of \eqref{diag:b1}, it is enough
to prove that 
$$w(h)=w'(\psi(h)) \mbox{ for any } h\in \Fp\lla U \rra.$$
Since $w$ (resp. $w'$) is a weight function on $\Fp\lla U \rra$
(resp. $\Fp\lla U' \rra$) and $\psi:\Fp\lla U\rra\to \Fp\lla U'\rra$
is a ring isomorphism which sends $U$-monomials to $U'$-monomials,
we are reduced to showing that $w(u_i)=w'(\psi(u_i))$ for $1\leq i\leq d$.
The latter is proved by the following chain of equalities
$$w(u_i)=W(x_i)=W(x_i')=W'(x_i')=w'(u_i')=w'(\psi(u_i)),$$
where the first and fourth equalities hold by the definition of $w$ and $w'$
and the second and third ones hold by the hypotheses of Lemma~\ref{weight2}.

Now given $\delta>0$, let $F_{\delta}=\{f\in F: W(f)<\delta\}$
and $F'_{\delta}=\{f\in F: W'(f)<\delta\}$. Then $F_{\delta}$
and $F'_{\delta}$ are both open subgroups of $F$, and by Lemma~\ref{weight2}(a) 
we have $F'_{\delta}\subseteq F_{\delta}$. On the other hand,
\eqref{eq:b1} implies that $[F:F'_{\delta}]=[F: F_{\delta}]$.
Combining these two facts, we conclude that $F'_{\delta}= F_{\delta}$
for any $\delta>0$, which is equivalent to the assertion of (b)(i).

\vskip .1cm
(b)(ii) Without loss of generality we can assume that $x=x_1$.
Choose a total order on the set $U$ such that $u_1$ is the smallest element. Consider
the following order on the set
of monic $U$-monomials $M=\{u_{i_1}\ldots u_{i_k} : u_{i_j}\in U\}$:
\vskip .1cm
\centerline
{$m<\mgal$ if either $w(m)<w(\mgal)$, or $w(m)=w(\mgal)$ and $m<\mgal$ lexicographically.}
\vskip .1cm

Given nonzero $f\in \Fp\lla U\rra$, we define the leading term of $f$, denoted $\LT(f)$, to
be the largest monic $U$-monomial which appears in $f$ with nonzero coefficient.
We also set $\LT(0)=0$. Note that
\begin{itemize}
\item[(1)] $\LT(fh)=\LT(f)\LT(h)$ for any $f,h\in \Fp\lla U\rra$
\item[(2)] $w(f)=w(\LT(f))$ for any $f\in \Fp\lla U\rra$
\item[(3)] If $\{f_{\alpha}\}$ is a collection of elements of $\Fp\lla U \rra$
with distinct leading terms and $c_{\alpha}\in\Fp$, then
$\LT(\sum c_{\alpha} f_{\alpha})$ is the largest element of the
set $\{\LT(f_{\alpha}): c_{\alpha}\neq 0\}$.
\end{itemize}
From (2), (3) and the proof of (a) it is clear that to prove (b)(ii)
it suffices to show that the elements of the set
$Z=\{\phi(m'_{\alpha}): m'_{\alpha} \mbox{ is a monic $U'$-monomial}\}$
have distinct leading terms.

Any element of $Z$ can be uniquely written as a product of elements
of the set $\{\phi(u'): u'\in U'\}=\{\iota(x')-1: x'\in X'\}$. Recall that
$$X'=\cup_{y\in X\setminus\{x_1\}}\{[y,\underbrace{x_1\ldots x_1]}_{k\mbox{ times }} :
0\leq k\leq p-1\}\cup\{x_1^p\}.$$
Clearly $\LT(\iota([x_j,\underbrace{x_1,\ldots, x_1}_{k\mbox{ times }}])-1)=u_j u_1^k$
for any $j\geq 2$  and $\LT(\iota(x_1^p)-1)=u_1^p$. Property (1) above
easily implies that the elements of $Z$ have distinct leading terms.
\end{proof}

\begin{Corollary}
\label{main:unmarked}
Let $X$ be a finite set, $F=F(X)$ and $W$ a weight function on $(F,X)$.
Let $F'$ be an open subgroup of $F$. Then there exists a free generating set
$X'$ of $F'$ such that the restriction of $W$ to $F'$ is a weight function on $(F',X')$.
\end{Corollary}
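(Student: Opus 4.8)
The plan is to argue by induction on the index $[F:F']$, which is a finite power of $p$ since $F'$ is an open subgroup of a finitely generated free pro-$p$ group; moreover every such $F'$ is again a free pro-$p$ group of finite rank (pro-$p$ Nielsen--Schreier), so the statement ``$W|_{F'}$ is a weight function on $(F',X')$'' makes sense at every stage. The base case $[F:F']=1$ is exactly Lemma~\ref{weight2}(b)(i) (take $X'=X$). So suppose $[F:F']=p^k$ with $k\ge 1$, and assume the corollary holds for all strictly smaller indices, over an arbitrary finite set and an arbitrary weight function.

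The first step of the inductive step is to interpose an index-$p$ subgroup. Since $F$ is pro-$p$, any maximal open subgroup $F''$ of $F$ containing $F'$ has index exactly $p$ and contains $\Phi(F)$; thus $F'\subseteq F''\subsetneq F$ with $[F:F'']=p$ and $[F'':F']=p^{k-1}$. The goal is then to find a free generating set $X''$ of $F''$ for which $W|_{F''}$ is a weight function on $(F'',X'')$: granting this, the inductive hypothesis applied to the pair $(F'',F')$ with the weight function $W|_{F''}$ produces a free generating set $X'$ of $F'$ on which $W|_{F'}=(W|_{F''})|_{F'}$ is a weight function, which finishes the proof.

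To produce $X''$, I would apply Lemma~\ref{weight1} to the index-$p$ subgroup $F''$. This yields a free generating set $\widetilde X=\{\widetilde x_1,\dots,\widetilde x_d\}$ of $F$ with $W(\widetilde x_i)=W(x_i)$ for all $i$ and $F''\supseteq \widetilde X\setminus\{\widetilde x_j\}$ for some $j$. By Lemma~\ref{weight2}(b)(i), the equalities $W(\widetilde x_i)=W(x_i)$ mean precisely that $W$ is itself a weight function on $(F,\widetilde X)$; this re-marking is what allows the remaining lemmas to be invoked with $\widetilde X$ in place of $X$. Now $F''$ has index $p$ in $F$ and contains $\widetilde X\setminus\{\widetilde x_j\}$, so by Lemma~\ref{lem:pdec}(a) it coincides with $\langle \widetilde X\setminus\{\widetilde x_j\}\rangle\Phi(F)$, and hence by Lemma~\ref{lem:pdec}(b) it is freely generated by the explicit set $X''$ of the form \eqref{pdec:1} (with $x=\widetilde x_j$). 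Finally Lemma~\ref{weight2}(b)(ii), applied with the marking $\widetilde X$ of $F$, shows that $W|_{F''}$ is a weight function on $(F'',X'')$, completing the inductive step.

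The one genuine subtlety — and the place I would be most careful — is the bookkeeping around this change of marking: Lemma~\ref{lem:pdec} and Lemma~\ref{weight2}(b)(ii) require the subgroup's free generating set to be exactly in the Schreier/commutator form \eqref{pdec:1} relative to some marked free generating set of the ambient group, so one genuinely must first replace $X$ by the set $\widetilde X$ furnished by Lemma~\ref{weight1} (and justify via Lemma~\ref{weight2}(b)(i) that $W$ remains a weight function for this new marking) before the explicit description is available. Once that is set up, the rest is a routine descent on the index.
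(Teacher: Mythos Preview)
Your proof is correct and follows exactly the approach the paper takes: reduce by induction to the index-$p$ case, then combine Lemma~\ref{weight1} (to adjust the free generating set so that all but one generator lies in the subgroup) with both parts of Lemma~\ref{weight2}(b) (part (i) to justify the re-marking of $F$, part (ii) to descend to the explicit Schreier-type generating set of the index-$p$ subgroup). The paper's own proof is simply a two-sentence summary of precisely this argument; your version spells out the bookkeeping in full, including the point you flag as the ``genuine subtlety,'' which is indeed the only place where one must be careful.
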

\begin{proof}
It is enough to consider the case when $F'$ has index $p$ in $F$.
In this case the result follows directly from Lemma~\ref{weight2}(b)
and Lemma~\ref{weight1}.
\end{proof}

We finish this section with the converse of Lemma~\ref{weight2}(b)(ii).
While this result is not critical for our purposes, it will help clarify the exposition.

\begin{Proposition}
\label{weight6}
Let $X$ be a finite set, $F=F(X)$ and $W$ a weight function on $(F,X)$.
Let $X'$ be another free generating set for $F$ such that $W$ is a weight function on $(F,X')$.
Then there exists a bijection $\sigma: X'\to X$ such that
$W(\sigma(x))=W(x)$ for any $x\in X$. In particular, $W(X')=W(X)$.
\end{Proposition}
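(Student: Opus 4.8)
The plan is to reduce to the index-$p$ case and then count, using the weight filtration. First I would observe that it suffices to treat the case where the weight function $W$ is integral, i.e. $W = t^D$ for a degree function $D$ and some $t \in (0,1)$: indeed, the assertion $W(\sigma(x)) = W(x)$ is a statement about the multiset of real numbers $\{W(x) : x \in X\}$, and two free generating sets of $F$ for which $W$ is a weight function have the same associated weight filtration $\{F_\delta\}$ (where $F_\delta = \{f : W(f) < \delta\}$, an open subgroup by Lemma~\ref{weight2}(a) applied in both directions). So the multiset $\{W(x) : x \in X\}$ is determined by the jumps of this filtration together with their multiplicities, which are intrinsic to $(F,W)$ and do not depend on the chosen generating set; one then extracts a bijection $\sigma$ matching them up.

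More concretely, here is the counting argument I would carry out for integral $W = t^D$. For each $\delta > 0$ set $F_\delta = \{f \in F : W(f) < \delta\}$; this is an open (hence finite-index) subgroup of $F$. If $X'$ is any free generating set for $F$ such that $W$ is a weight function on $(F,X')$, then I claim the multiset $\{W(x') : x' \in X'\}$ can be recovered from the function $\delta \mapsto [F : F_\delta]$. The key point is the analogue of the rank computation: for $\delta$ just above $1$ (more precisely for $1 < \delta$ with no $W(x')$ in the interval $(1,\delta)$... wait, all $W(x') < 1$) — let me instead use $\Phi(F)$-type reasoning. Order the distinct values $1 > s_1 > s_2 > \cdots$ taken by $W$ on $F \setminus \{1\}$. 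For a generating set $X'$ adapted to $W$, the number of $x' \in X'$ with $W(x') \geq s_k$ equals $\dim_{\F_p}$ of a suitable subquotient of $F$ determined by the filtration $\{F_\delta\}$ — concretely, using that $F_\delta / F_{\delta'}$ for the relevant thresholds is generated by images of those $x'$, and applying Claim~\ref{Frattini}(1) together with the identity $fg - 1 = (f-1) + (g-1) + (f-1)(g-1)$ to see these images are independent modulo the Frattini-type subgroup $F_{\delta'} \Phi(F_\delta)$. Since the left-hand side depends only on $(F, W)$ and not on $X'$, the counting function $k \mapsto \#\{x' \in X' : W(x') \geq s_k\}$ is independent of the choice of $X'$; hence so is the full multiset $\{W(x') : x' \in X'\}$, and comparing $X$ with $X'$ gives the desired bijection $\sigma$ with $W(\sigma(x)) = W(x)$, whence $W(X') = W(X)$.

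The main obstacle is making the claim "the number of $x' \in X'$ with $W(x') \ge s_k$ equals an intrinsic dimension" precise and correct — i.e. identifying the right subquotient of the weight filtration and checking that the images of the adapted generators genuinely form a basis of it. This is where one must be careful that $p$-th powers and commutators of generators with $W \ge s_j$ for $j<k$ can also contribute to the same weight level $s_k$, so one should work modulo $F_{s_k} \cdot (F_{s_{k-1}})^p \cdot [F, F_{s_{k-1}}]$ or a similar subgroup, exactly as in Lemma~\ref{weight2}. Given that Lemma~\ref{weight2}(b)(ii) and Lemma~\ref{weight1} already package the needed facts about adapted generating sets under passage to index-$p$ subgroups, an alternative and perhaps cleaner route is induction: reduce $F$ to an index-$p$ subgroup $F''$ using Lemma~\ref{weight1} for the generating set $X$ and also for $X'$, arrange (after relabeling) that both reductions land in the same $F''$, apply the inductive hypothesis to $F''$ with its restricted weight function, and then lift the resulting bijection back up, tracking the single "extra" generator in each case. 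I would present the direct filtration-counting argument as the main proof and mention the inductive alternative only if it shortens the bookkeeping.
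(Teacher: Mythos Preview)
Your intuition is right and in fact matches the paper's strategy: the multiset $\{W(x) : x \in X\}$ is an invariant of the pair $(F,W)$, recovered by a counting argument. But your execution stalls at exactly the point you flag as the main obstacle, and the paper resolves it more simply than either of your proposed fixes.

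You try to read off the number of generators with $W(x') \ge s_k$ from the indices $[F:F_\delta]$ of the weight filtration, then correctly realize this fails because commutators and $p$-th powers contaminate the count, and suggest working modulo a level-dependent subgroup such as $F_{s_k}\cdot (F_{s_{k-1}})^p\cdot [F,F_{s_{k-1}}]$. That proposal is both vague and harder than necessary. The clean move (and this is what the paper does) is to pass once and for all to the fixed quotient $F/\Phi(F)$ and define $\overline{W}(\bar f) = \inf\{W(h) : \bar h = \bar f\}$. Lemma~\ref{weight7} then gives the exact formula $\overline{W}(\bar f) = \max\{W(x_i): f \text{ is $X$-linear in } x_i\}$, from which one reads off immediately that
\[
\big|\{\bar f \in F/\Phi(F) : \overline{W}(\bar f) \le w\}\big| = p^{n_w}\quad\text{where}\quad n_w = |\{x \in X : W(x) \le w\}|.
\]
Since $\overline{W}$ is defined purely in terms of $W$ (with no reference to $X$), the function $w \mapsto n_w$ is intrinsic to $(F,W)$, and the proposition follows. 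In your language this amounts to computing $\dim_{\Fp}\big(F_\delta\,\Phi(F)/\Phi(F)\big)$ rather than $[F:F_\delta]$: the single Frattini quotient kills exactly the commutators and $p$-th powers you were worried about, uniformly across all weight levels, so no level-by-level bookkeeping is needed.

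Two minor points. The reduction to integral $W$ is unnecessary (the argument above works verbatim for any weight function), and your justification for it does not actually use integrality. Your alternative inductive route via index-$p$ subgroups would require nontrivial work to synchronize the two applications of Lemma~\ref{weight1} (one for $X$, one for $X'$) so that both land in the same subgroup $F''$; it is not obviously shorter than the direct argument.
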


Before proving Proposition~\ref{weight6} we introduce an important definition
and establish an auxiliary result.

\begin{Definition}\rm
Let $X$ be a finite set, $F=F(X)$, and fix some $x\in X$.
\begin{itemize}
\item[(a)] An element $f\in F$ will be called \textit{$X$-linear in $x$} if
$f\not\in \la X\setminus\{x\}\ra \Phi(F)$. We will say \textit{`linear in $x$'}
instead of \textit{`$X$-linear in $x$'} when $X$ is clear from the context.
\item[(b)] We will say that $f\in F$ is \textit{linear} if $f\not\in\Phi(F)$.
Clearly, $f\in F$ is linear if and only if $f$ is $X$-linear in some $x\in X$.
\end{itemize}
\end{Definition}

The following straightforward claim explains this terminology:
\begin{Claim}
\label{Xlinear}
Let $X=\{x_1,\ldots, x_d\}$, $U=\{u_1,\ldots, u_d\}$,
and embed $F=F(X)$ in $\Fp\lla U\rra$ by $x_i\mapsto 1+u_i$.
Then $f\in F$ is $X$-linear in $x_k$ if and only if
the expansion of $f$ as a power series in $\{u_1,\ldots, u_d\}$ contains a term $c\, u_k$ where $c$ is a nonzero element of $\Fp$.
\end{Claim}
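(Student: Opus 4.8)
The plan is to compare the two conditions through the common window of the Frattini quotient $F/\Phi(F)$, which by Claim~\ref{Frattini}(2) is an $\F_p$-vector space of dimension $d$ with basis $\{x_1\Phi(F),\dots,x_d\Phi(F)\}$.

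First I would reformulate $X$-linearity in $x_k$ inside $F/\Phi(F)$. The subgroup $\la X\setminus\{x_k\}\ra\Phi(F)$ maps onto the subspace of $F/\Phi(F)$ spanned by $\{x_i\Phi(F):i\neq k\}$: it clearly contains this span, and it is contained in it because $\la X\setminus\{x_k\}\ra$ is topologically generated by the $x_i^{\pm 1}$ with $i\neq k$ while $\Phi(F)\mapsto 0$, the span being closed (finite-dimensional). Hence this subgroup is exactly the $k$-th coordinate hyperplane (in fact, by Lemma~\ref{lem:pdec}(a) it is the unique index-$p$ subgroup containing $X\setminus\{x_k\}$). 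Therefore $f$ is $X$-linear in $x_k$ if and only if the $k$-th coordinate of $f\Phi(F)$ in the basis $\{x_i\Phi(F)\}$ is nonzero.

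Next I would read off that coordinate from the Magnus expansion. Let $\mm$ be the two-sided ideal of $\Fp\lla U\rra$ generated by $u_1,\dots,u_d$; then $\mm/\mm^2$ is the $\F_p$-space with basis $u_1+\mm^2,\dots,u_d+\mm^2$, and $\iota(f)-1\in\mm$ for every $f\in F$. Define $\lambda\colon F\to\mm/\mm^2$ by $\lambda(f)=(\iota(f)-1)+\mm^2$. The identity $\iota(fg)-1=(\iota(f)-1)+(\iota(g)-1)+(\iota(f)-1)(\iota(g)-1)$, together with $(\iota(f)-1)(\iota(g)-1)\in\mm^2$, shows $\lambda$ is a homomorphism into the additive group $\mm/\mm^2$; since that group is abelian of exponent $p$, $\lambda$ kills $[F,F]$ and $F^p$, so it factors through $F/\Phi(F)$, and continuity of $\lambda$ (a composite of continuous maps) makes this valid for the pro-$p$ group $F$. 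As $\lambda(x_i)=u_i+\mm^2$, the induced map $F/\Phi(F)\to\mm/\mm^2$ carries the basis $\{x_i\Phi(F)\}$ to the basis $\{u_i+\mm^2\}$, hence is an isomorphism. Writing $\iota(f)=1+\sum_i c_i u_i+(\text{terms of degree}\geq 2)$ with $c_i\in\F_p$, we get $\lambda(f)=\sum_i c_i(u_i+\mm^2)$, so the $k$-th coordinate of $f\Phi(F)$ equals $c_k$.

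Combining the two steps: $f$ is $X$-linear in $x_k$ iff $c_k\neq 0$, i.e. iff the power-series expansion of $f$ contains the term $c_k u_k$ with $c_k\neq 0$ — which is exactly the assertion. There is no genuine obstacle; the proof is short, and the only points deserving a line of care are the identification of $\la X\setminus\{x_k\}\ra\Phi(F)$ with a coordinate hyperplane of $F/\Phi(F)$ and the (routine) check that $\lambda$ descends to a map of the pro-$p$ group, both of which I would dispatch as above.
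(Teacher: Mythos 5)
Your argument is correct. The paper gives no proof of Claim~\ref{Xlinear} (it is simply labelled ``straightforward''), and your proof is the natural justification: the Magnus embedding induces, after abelianization, an isomorphism $F/\Phi(F)\to\mm/\mm^2$ sending $x_i\Phi(F)$ to $u_i+\mm^2$, so the $k$-th coordinate in the Frattini quotient is exactly the coefficient of $u_k$. The two points you flag are indeed the only ones needing care, and you handle both correctly: $\la X\setminus\{x_k\}\ra\Phi(F)$ is the full preimage of the coordinate hyperplane because it is a union of cosets of the open subgroup $\Phi(F)$ (cf.\ Claim~\ref{Frattini} and Lemma~\ref{lem:pdec}(a)), and $\lambda$ descends to $F/\Phi(F)$ by continuity together with the identity $fg-1=(f-1)+(g-1)+(f-1)(g-1)$, the same identity the paper itself uses in Lemma~\ref{weight0}.
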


\begin{Lemma}
\label{weight7} Let $X=\{x_1,\ldots,x_d\}$ be a finite set, $F=F(X)$
and $W$ a weight function on $(F,X)$. Take $f\in F$, and write $f=f_L f_Q$ where
$f_L=x_1^{k_1}\ldots x_d^{k_d}$ with $0\leq k_i\leq p-1$ and $f_Q\in \Phi(F)$
(such factorization is unique).
The following hold:
\begin{itemize}
\item[(a)] $f$ is $X$-linear in $x_i$ if and only if $k_i\neq 0$.
\item[(b)] $W(f_L)=\max\{W(x_i) : f \mbox{ is $X$-linear in } x_i\}$.
\item[(c)] $W(f)=\max\{W(f_L), W(f_Q)\}$.
\item[(d)] If $f$ is $X$-linear in $x_i$, then $W(f)\geq W(x_i)$.
\end{itemize}
\end{Lemma}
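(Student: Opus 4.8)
The plan is to descend to the Magnus embedding $\iota\colon F\hookrightarrow\Fp\lla U\rra$, $x_i\mapsto 1+u_i$, and to work with the weight function $w$ on $\Fp\lla U\rra$ that is $X$-compatible with $W$, so that $W(g)=w(\iota(g)-1)$ for every $g\in F$. Putting $h_L=\iota(f_L)-1$ and $h_Q=\iota(f_Q)-1$, from $\iota(f)=\iota(f_L)\iota(f_Q)$ we get
\[
\iota(f)-1 \;=\; h_L+h_Q+h_Lh_Q,
\]
and the structural fact driving everything is that $h_Q$ — and hence also $h_Lh_Q$, whose monomials have degree $\geq 3$ — carries no degree-one monomial: since $f_Q\in\Phi(F)$ it is not $X$-linear in any generator, so this is exactly Claim~\ref{Xlinear}. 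Consequently the degree-one part of $\iota(f)-1$ is the degree-one part of $h_L$.

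For (a) I would pass to $F/\Phi(F)\cong(\dbZ/p\dbZ)^d$ (Claim~\ref{Frattini}(2)): since $f_Q\in\Phi(F)$, the image of $f$ is $\sum_i k_i\,\bar x_i$, while $\la X\setminus\{x_i\}\ra\Phi(F)$ is the full preimage of the span of $\{\bar x_j:j\neq i\}$; hence $f$ is $X$-linear in $x_i$ iff the $\bar x_i$-coefficient of $\bar f$ is nonzero, i.e.\ iff $k_i\neq 0$. For (b), expand $\iota(f_L)=\prod_i(1+u_i)^{k_i}$: since $0\leq k_i\leq p-1$, every binomial coefficient $\binom{k_i}{j}$ with $0\leq j\leq k_i$ is a unit of $\Fp$, so the monomials occurring in $h_L$ are exactly the words $u_{j_1}^{a_1}\cdots u_{j_r}^{a_r}$ with $j_1<\dots<j_r$ and $1\leq a_l\leq k_{j_l}$ (each with nonzero coefficient), and in particular $u_i$ occurs in $h_L$ with coefficient $k_i$ whenever $k_i\neq 0$. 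By clause (iv) for weight functions, $W(f_L)=w(h_L)$ is the maximum of $w$ over these finitely many monomials; since $w(u_{j_1}^{a_1}\cdots u_{j_r}^{a_r})=\prod_l w(u_{j_l})^{a_l}\leq w(u_{j_1})=W(x_{j_1})$ with $k_{j_1}\neq 0$, and since the length-one monomials $u_i$ ($k_i\neq 0$) realise the bound, this maximum equals $\max\{W(x_i):k_i\neq 0\}$. Combined with (a) — and the convention $\max\emptyset=0$, which handles $f_L=1$ — this gives (b).

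For (c) I would first show $W(f)\geq W(f_L)$: for each $i$ with $k_i\neq 0$ the monomial $u_i$ occurs in $h_L$ with coefficient $k_i$ and in neither $h_Q$ nor $h_Lh_Q$, hence occurs in $\iota(f)-1$ with coefficient $k_i\neq 0$, so $W(f)=w(\iota(f)-1)\geq w(u_i)$; taking the maximum over such $i$ and using (b) gives $W(f)\geq\max\{W(x_i):k_i\neq 0\}=W(f_L)$. On the other hand Lemma~\ref{weight0}(a) gives $W(f)\leq\max\{W(f_L),W(f_Q)\}$, and writing $f_Q=f_L^{-1}f$ together with $W(f_L^{-1})\leq W(f_L)$ (immediate from $\iota(g)^{-1}-1=-(\iota(g)-1)+(\iota(g)-1)^2-\cdots$) and Lemma~\ref{weight0}(a) again gives $W(f_Q)\leq\max\{W(f_L),W(f)\}$. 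If $W(f_L)\geq W(f_Q)$, the first two bounds pinch $W(f)$ to $W(f_L)=\max\{W(f_L),W(f_Q)\}$; if $W(f_L)<W(f_Q)$, the last bound forces $W(f_Q)\leq W(f)$, whence $W(f)=W(f_Q)=\max\{W(f_L),W(f_Q)\}$. Either way (c) holds, and (d) follows at once: if $f$ is $X$-linear in $x_i$ then $k_i\neq 0$ by (a), so $W(x_i)\leq\max\{W(x_j):k_j\neq 0\}=W(f_L)\leq W(f)$ by (b) and (c).

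The one step that is not pure bookkeeping is the inequality $W(f)\geq W(f_L)$ in the proof of (c): it does not follow from the weight-function axioms on $(F,X)$ — these yield only the two ``$\leq$'' inequalities above, which are vacuous precisely when $W(f_Q)$ is the largest of the three quantities — and it is here that one genuinely needs to descend to the power series algebra and exploit that $\Phi(F)$ contributes nothing in degree one, so that the linear monomials of $f_L$ survive in $\iota(f)-1$ uncancelled.
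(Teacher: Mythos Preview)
Your proof is correct and, for parts (a), (b), and (d), essentially identical to the paper's (only more detailed). For part~(c) you take a slightly different route. The paper argues directly: since $h_Q$ has no linear terms, the \emph{maximal}-weight monomials of $h_L$ and $h_Q$ are distinct (one is linear, one is not), so no cancellation occurs in $h_L+h_Q$; and since $w(h_Lh_Q)=w(h_L)w(h_Q)<\max\{w(h_L),w(h_Q)\}$, the survivor is not killed by the cross term either, giving $w(\iota(f)-1)=\max\{w(h_L),w(h_Q)\}$ in one stroke. You instead establish only the one-sided bound $W(f)\geq W(f_L)$ from the survival of the linear monomials, and then close the argument with the ultrametric inequality $W(gh)\leq\max\{W(g),W(h)\}$ applied to $f=f_Lf_Q$ and $f_Q=f_L^{-1}f$, followed by a short case split. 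The paper's version is quicker, but yours has the merit (which your final paragraph makes explicit) of pinpointing that the only place one genuinely needs to descend to $\Fp\lla U\rra$ is to get $W(f)\geq W(f_L)$; the rest is formal manipulation on the group side.
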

\begin{proof} (a) is obvious. Let $U=\{u_1,\ldots, u_d\}$, embed $F=F(X)$ in $\Fp\lla U\rra$ by $x_i\mapsto 1+u_i$.
By definition, there exists a weight function $w$ on $\Fp\lla U\rra$ such that
$W(h)=w(h-1)$ for each $h\in F$.

Note that $f_L-1=\sum_{i=1}^d k_i u_i + r$ where each term in $r$ has degree $\geq 2$
and involves only $u_i$'s with $k_i\neq 0$. Thus,
$$W(f_L)=w(f_L-1)=\max\{w(u_i): k_i\neq 0\}=\max\{W(x_i) : k_i\neq 0\}.$$
In view of (a), this proves (b).

Since the expansion of $f_Q$ in $u_1,\ldots, u_d$ has no linear terms, the terms
of maximal $w$-weight in the expansions of $f_L-1$ and $f_Q-1$ are distinct.
Since $f-1=f_L f_Q-1=(f_L-1)+(f_Q-1)+(f_L-1)(f_Q-1)$, (c) follows.
Finally, (d) is a direct consequence of (b) and (c)
\end{proof}

\begin{proof}[of Proposition~\ref{weight6}]
Let $\Fgag=F/\Phi(F)$, and for each $f\in F$ let $\bar f$ be the image of $f$ in $\Fgag$.
Define the function $\Wgag : \Fgag \to [0,1)$ by setting
$$\Wgag(\bar f)=\inf\{W(h) : \bar h=\bar f\}$$
We claim that
$$\Wgag(\bar f)=\max\{W(x_i): f\mbox{ is $X$-linear in } x_i\} \eqno (***)$$
Indeed, define $f_L$ as in Lemma~\ref{weight7}. Then
$\overline{f_L}=\overline{f}$, whence $\Wgag(\bar f)\leq W(f_L)$.
On the other hand, for any $h\in F$ with $\bar h=\bar f$ we
have $h_L=f_L$, so by Lemma~\ref{weight7}(c) $W(h)\geq W(h_L)=W(f_L)$.
Thus, $\Wgag(\bar f)=W(f_L)$, and $W(f_L)=\max\{W(x_i): f\mbox{ is $X$-linear in } x_i\}$
by Lemma~\ref{weight7}(b).

Now given $w\in [0,1)$, let $n_w=\big|\{x\in X: W(x)\leq w\}\big|$. It follows
from (***) that $$\big|\fbar \in \Fgag:\,\, \Wgag(\bar f)\leq w\big|= p^{n_w}.$$
Thus, $n_w$ is uniquely determined by $W$ (not by $X$), and the assertion
of Proposition~\ref{weight6} easily follows.
\end{proof}

\subsection{Transformations of presentations} Recall that if $(X,R)$ is a presentation,
$\Gp(X,R)$ denotes the pro-$p$ group defined by this presentation,
that is, $\Gp(X,R)=F(X)/{\la R\ra}^{F(X)}$.

\begin{Definition}\rm Let $(X,R)$ be a presentation, $F=F(X)$ and
$\pi:F\to \Gp(X,R)$ the natural surjection. Another presentation
$(X',R')$ will be called a \textit{subpresentation} of $(X,R)$ if
\begin{itemize}
\item[(a)] $X'$ is a finite subset of $F$.
\item[(b)] The (closed) subgroup $F'=\la X'\ra$ of $F$ is freely generated by $X'$
\item[(c)] If $\pi'$ is the restriction of $\pi$ to $F'$, then
$R'$ generates $\Ker\pi'$ as a (closed) normal subgroup of $F'$.
\end{itemize}
Note that the group $\Gp(X',R')$ can be canonically identified
with the subgroup $H=\Im\pi'$ of $G=\Gp(X,R)$.
\end{Definition}

\vskip .12cm
\begin{Definition}\rm
A \textit{transformation} $T$ is an ``operation'' of replacing a presentation
$(X,R)$ by its subpresentation $(X',R')$. We shall symbolically write
$T:(X,R)\to (X',R')$.
\end{Definition}

We now describe four types of transformations which we call elementary.
\vskip .12cm

{\bf 1. $p$-descent.} Let $(X,R)$ be a presentation, $F=F(X)$, $G=\Gp(X,R)$
and $\pi:F\to G$ the natural surjection. Let $x\in X$ be such that
\begin{equation} 
\label{eq:pdnr}
\mbox{ no relator from  $R$ is $X$-linear in $x$.}
\end{equation}
Let $Y=X\setminus\{x\}$, and define $X',R'\subset F(X)$ by setting
$$X'=\cup_{y\in Y}\{y, [y,x],[y,x,x], \ldots, [y,\underbrace{\! x,\ldots, x]}_{p-1\mbox{ times }}\}\cup\{x^p\};$$
$$ R'=\{[r,\underbrace{x,\ldots, x]}_{k\mbox{ times }} : r\in R,\,\, 0\leq k\leq p-1\}.$$
We claim that $(X',R')$ is a subpresentation of $(X,R)$ and $\Gp(X',R')$ is a subgroup
of index $p$ in $G=\Gp(X,R)$. The transformation $(X,R)\to (X',R')$ will be called the
\textit{$p$-descent at $x$}.

Indeed, by Lemma~\ref{lem:pdec} $X'$ is a free generating set for the subgroup
$F'=\la Y\ra \Phi(F)$ which has index $p$ in $F$. Since $R\subset F'$ by \eqref{eq:pdnr}
(and hence $\Ker\pi\subseteq F'$), the subgroup $H=\pi(F')$ has index $p$ in $G$.
Furthermore, $\{1,x,\ldots, x^{p-1}\}$ is a transversal for $F'$ in $F$, so
the subset $R''=\{r^{x^k}: r\in R,\,\, 0\leq k\leq p-1\}$ generates
$\Ker\pi$ as a normal subgroup of $F'$. Thus, $(X',R'')$ is a subpresentation of $(X,R)$
and $\Gp(X',R'')=H$. The same argument as in the proof of Lemma~\ref{lem:pdec} shows that
$\la R''\ra=\la R'\ra$. Thus, $(X',R')$ is also a subpresentation
of $(X,R)$ with $\Gp(X',R')=\Gp(X',R'')=H$.
\vskip .1cm

The next three types of transformations do not change the group, that is,
they replace a presentation $(X,R)$ of a group $G$ by another presentation $(X',R')$ of $G$.

{\bf 2. Change of generators.} A \textit{change of generators} is a transformation
of the form $(X,R)\to (X',R)$ where $X'$ is a free generating set of $F(X)$.
Clearly, $\Gp(X,R)=\Gp(X',R)$.
A basic change of generators is given by a Nielsen transformation, that is, we set $X'=X\setminus\{x\}\cup\{xx_1^{\pm 1}\}$ where $x$ and $x_1$ are distinct elements of $X$.
\vskip .1cm

{\bf 3. Change of relators.} A \textit{change of relators} is a transformation
of the form $(X,R)\to (X,R')$ (where $R$ and $R'$ generate the same
normal subgroup of $F(X)$). By definition $\Gp(X,R)=\Gp(X,R')$.
A basic change of relators is given by setting $R'=R\setminus\{r\}\cup\{rr_1^{\pm 1}\}$ where
$r$ and $r_1$ are distinct elements of $R$.
 \vskip .1cm

{\bf 4. Cleanup.} Suppose that $(X,R)$ is a presentation
such that $X\cap R\neq \emptyset$, and let $C\subseteq X\cap R$.
We can construct a new presentation $(X',R')$ of the same group
by eliminating $C$ both from sets of generators and relators
and replacing each element of $C$ by $1$ in the remaining relators.

Formally, we set $X'=X\setminus C$ and $R'=\phi(R\setminus C)$
where $\phi: F(X)\to F(X')$ is the homomorphism which acts as identity
on $F(X')$ and sends all elements of $C$ to $1$.
It is an easy exercise to check that $\Gp(X',R')=\Gp(X,R)$.
The transformation $(X,R)\to (X',R')$ will be called \textit{the cleanup of the set $C$}.

\vskip .12cm

Finally, we introduce one more type of transformations (which will not
be considered elementary). It composes a change of generators with a cleanup
and shows the usefulness of cleanups.

{\bf Pair elimination.} Let $(X,R)$ be a presentation. Suppose that
some $r\in R$ is $X$-linear in some $x\in X$. By Claim~\ref{Frattini}(b)
$\Xgal=X\setminus\{x\}\cup\{r\}$ is a free generating set for $F(X)$.
Let $(X',R')$ be the presentation obtained from $(X,R)$ by first making
the change of generators $(X,R)\to (\Xgal,R)$ followed by the cleanup
of the singleton set $\{r\}$ (so that $X'=X\setminus\{x\}$). The transformation
$(X,R)\to (X',R')$ will be called the \textit{elimination of the pair $(x,r)$}.

\subsection {Weight-preserving transformations}
\begin{Definition}\rm A \textit{weighted presentation} is a triple $(X,R,W)$
where $(X,R)$ is a presentation and $W$ is a weight function on $(F(X),X)$.
\end{Definition}

Suppose that $(X,R,W)$ is a weighted presentation and $(X,R)\to (X',R')$
is an elementary transformation. We shall be concerned with the following questions.
\begin{itemize}
\item[(a)] Let $W'$ be the restriction of $W$ to $F(X')$. Is $W'$ a weight function on $(F(X'),X')$?
In other words, is $(X',R',W')$ a weighted presentation?
\item[(b)] What is the relationship between $W(R)$ and $W(R')$?
\end{itemize}
Affirmative answer to question (a) for most elementary transformations
was obtained in Subsection 3.1. We now analyze question (b).

\begin{Definition}\rm Let $(X,R,W)$ be a weighted presentation and $F=F(X)$.
\begin{itemize}
\item[(a)] A change of generators $(X,R)\to (X',R)$ will be called \textit{$W$-good} if
$W$ is a weight function on $(F,X')$.

\item[(b)] A change of relators $(X,R)\to (X,R')$ will be called \textit{$W$-good} if
$W(R')\leq W(R)$.

\item[(c)] A \textit{$W$-good elementary transformation} is any $p$-descent, any
cleanup or a $W$-good change of generators or relators.

\item[(d)] A transformation $(X,R)\to (X',R')$ will be called \textit{$W$-good}
if it is obtained by a sequence of $W$-good elementary transformations.
\end{itemize}
If $D$ is a degree function on $(X,R)$, a transformation $(X,R)\to (X',R')$
will be called \textit{$D$-good} if it is $W$-good where $W$ is the $(D,t)$-weight function on
$(F(X),X)$ for some $t\in (0,1)$ (clearly, the value of $t$ is not essential).
\end{Definition}

\begin{Lemma}
\label{valuations}
Let $(X,R,W)$ be a weighted presentation and $T:(X,R)\to (X',R')$
a $W$-good elementary transformation.
\begin{itemize}
\item[(a)] The restriction of $W$ to $F(X')$ is a weight function on $(F(X'),X')$.
\item[(b)] Suppose that $T$ is the $p$-descent at some $x\in X$, let $\tau=W(x)$ and $c=\frac{1-\tau^p}{1-\tau}$.
Then $$W(R')\leq c W(R)\mbox{ and }W(X')-1=c(W(X)-1).$$
\item[(c)] Suppose that $T$ is a change of generators or relators. Then $W(R')\leq W(R)$ and $W(X')=W(X)$.
\item[(d)] Suppose $T$ is the cleanup of a set $C\subseteq X\cap R$. Then $W(X')=W(X)-W(C)$ and $W(R')\leq W(R)-W(C)$.
\end{itemize}
Now assume that the transformation $T:(X,R)\to (X',R')$ is $D$-good
for some degree function $D$ on $(F(X),X)$.
\begin{itemize}
\item[(e)] If $T$ is a $p$-descent at some $x\in X$ and $n=D(x)$, then
$$\frac{1-H_{X'}(t)+H_{R'}(t)}{1-t}\leq \frac{1-H_X(t)+H_R(t)}{1-t}\cdot \frac{1-t^{pn}}{1-t^n}$$
as power series (where all Hilbert series are with respect to $D$).
\item[(f)] If $T$ is a change of generators or relators or a cleanup, then
$$\frac{1-H_{X'}(t)+H_{R'}(t)}{1-t}\leq\frac{1-H_X(t)+H_R(t)}{1-t}$$
as power series.
\item[(g)] Let $G=\Gp(X,R)$, $G'=\Gp(X',R')$, and let $\{G_n\}$ (resp. $\{G'_n\}$)
be the $D$-filtration of $G$ (resp. $G'$). Then $G'_n=G'\cap G_n$.
\end{itemize}
\end{Lemma}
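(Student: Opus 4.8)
The plan is to treat the four types of elementary transformation separately, reducing each assertion to the results of Subsection~3.1 (Lemmas~\ref{weight0} and~\ref{weight2}, Proposition~\ref{weight6}) together with elementary power-series manipulations. For~(a) I would argue: for a $p$-descent at $x$, the group $F'=\la X'\ra$ is the unique index-$p$ subgroup of $F(X)$ containing $X\setminus\{x\}$ and $X'$ is the set~\eqref{pdec:1}, so Lemma~\ref{weight2}(b)(ii) applies directly; for a $W$-good change of generators the claim is the definition of $W$-goodness; for a change of relators $X'=X$ and there is nothing to prove; and for the cleanup of $C$ one has $F(X')=\la X\setminus C\ra$, so writing $W$ as $X$-compatible with a weight function $w$ on $\Fp\lla U\rra$, the restriction of $w$ to the subalgebra $\Fp\lla U'\rra$ in the variables indexed by $X\setminus C$ is again a weight function, $X'$-compatible with $W|_{F(X')}$. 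In each case the restriction $D'$ of the given degree function $D$ to $F(X')$ is then a degree function on $(F(X'),X')$, and all Hilbert series below will be formed with $D'$.

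Parts~(b)--(d) will be direct computations. For a $p$-descent at $x$ (write $\tau=W(x)$, $c=\frac{1-\tau^p}{1-\tau}$), the leading-term computation in the proof of Lemma~\ref{weight2}(b)(ii) gives $W([y,\underbrace{x,\dots,x}_{k}])=W(y)\tau^k$ for $y\in Y=X\setminus\{x\}$ and $W(x^p)=\tau^p$, so summing the geometric progression over $k=0,\dots,p-1$ yields $W(X')=c(W(X)-\tau)+\tau^p$, which rearranges to $W(X')-1=c(W(X)-1)$ because $c(1-\tau)=1-\tau^p$; for the relators only the inequality $W([r,\underbrace{x,\dots,x}_{k}])\le W(r)\tau^k$ from Lemma~\ref{weight0}(c) is needed, and summing gives $W(R')\le cW(R)$. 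For~(c): a $W$-good change of generators leaves $R$ unchanged and has $W(X')=W(X)$ by Proposition~\ref{weight6}; a $W$-good change of relators leaves $X$ unchanged and has $W(R')\le W(R)$ by definition. For~(d): $W(X')=W(X)-W(C)$ since $X'=X\setminus C$, and the homomorphism $\phi\colon F(X)\to F(X')$ that kills $C$ corresponds, under the Magnus embeddings, to the continuous algebra homomorphism $\Fp\lla U\rra\to\Fp\lla U'\rra$ which sends to zero the variables indexed by $C$, hence annihilates every monomial involving one of them; therefore $\iota'(\phi(f))-1$ is obtained from $\iota(f)-1$ by deleting a set of monomials, so $W(\phi(f))\le W(f)$ for all $f$, and summing over $R\setminus C$ (using $C\subseteq R$) gives $W(R')\le W(R)-W(C)$.

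For~(e)--(f) the idea is to promote these pointwise estimates to coefficientwise inequalities of power series, using the single observation that for integers $a\ge b$ the series $\frac{t^a-t^b}{1-t}=-(t^b+t^{b+1}+\dots+t^{a-1})$ has all coefficients $\le 0$, while $\frac1{1-t}$ has all coefficients $\ge 0$. In the $p$-descent case, the exact values from the proof of Lemma~\ref{weight2}(b)(ii) give $D'([y,\underbrace{x,\dots,x}_{k}])=D(y)+kn$ and $D'(x^p)=pn$, so $H_{X'}(t)=c(t)(H_X(t)-t^n)+t^{pn}$ with $c(t)=\frac{1-t^{pn}}{1-t^n}$, which rearranges (again via $c(t)(1-t^n)=1-t^{pn}$) to the \emph{exact} identity $1-H_{X'}(t)=c(t)(1-H_X(t))$; meanwhile $D'([r,\underbrace{x,\dots,x}_{k}])\ge D(r)+kn$ by Lemma~\ref{weight0}(c), and since $R'$ is a set, $H_{R'}(t)\le\sum_{r\in R}\sum_{k=0}^{p-1}t^{D'([r,x,\dots,x]_{k})}$; combining these with the observation above yields $\frac{1-H_{X'}+H_{R'}}{1-t}-c(t)\frac{1-H_X+H_R}{1-t}=\frac{H_{R'}-c(t)H_R}{1-t}\le 0$, which is~(e). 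Part~(f) will be the same bookkeeping with $c(t)$ replaced by $1$: a change of generators gives $H_{X'}=H_X$ (Proposition~\ref{weight6}) and $H_{R'}=H_R$; a cleanup gives $H_{X'}=H_X-\sum_{c\in C}t^{D(c)}$ and, since $W(\phi(r))\le W(r)$ with $t\in(0,1)$ forces $D(\phi(r))\ge D(r)$, the remaining difference is handled as before; and for a basic change of relators $W(rr_1^{\pm1})\le W(r)$ forces $D(rr_1^{\pm1})\ge D(r)$.

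Finally, for~(g) let $\pi\colon F(X)\to G$ be the defining surjection and $\pi'=\pi|_{F(X')}$, so that $G'=\Im\pi'$ and $D'=D|_{F(X')}$ by~(a); then $G'_n=\{\pi(f):f\in F(X'),\,D(f)\ge n\}\subseteq G'\cap G_n$ is immediate. For the reverse inclusion I would take $g\in G'\cap G_n$ with $\pi(f)=g$, $D(f)\ge n$: if $T$ is a $p$-descent then $\ker\pi\subseteq F(X')$ and $[F(X):F(X')]=p$, so $g\in\pi(F(X'))$ forces $f\in F(X')$; if $T$ is a change of generators or of relators then $F(X')=F(X)$ and $G'=G$, so $G'_n=G_n=G'\cap G_n$; if $T$ is the cleanup of $C$ then $\pi\circ\phi=\pi$, so $\phi(f)\in F(X')$ still satisfies $\pi(\phi(f))=g$ while $D(\phi(f))\ge D(f)\ge n$ by the monomial-deletion estimate of~(d) --- so $g\in G'_n$ in every case. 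The two places where something non-formal happens, and where I expect the main work to lie, are the uniform treatment of the cleanup (parts~(a),(d),(g) all rest on the single fact that $\phi$ corresponds on the algebra side to annihilating the monomials containing a $C$-variable) and the passage from pointwise weight estimates to coefficientwise power-series inequalities in~(e)--(f) via the identity $\frac{t^a-t^b}{1-t}=-(t^b+\dots+t^{a-1})$ for $a\ge b$.
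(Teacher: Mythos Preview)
Your proof is correct and follows essentially the same route as the paper: the same appeals to Lemma~\ref{weight2}(b)(ii), Lemma~\ref{weight0}(c), and Proposition~\ref{weight6} for (a)--(d), the same exact identity $1-H_{X'}(t)=c(t)(1-H_X(t))$ combined with the termwise estimate on $H_{R'}$ for (e), and the same $F'=\pi^{-1}(G')$ / $f'=\phi(f)$ arguments for (g). Your formulation of the key power-series step via $\frac{t^a-t^b}{1-t}=-(t^b+\cdots+t^{a-1})$ is a clean way to package what the paper does by reducing to the case $|R|=1$.

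One small remark on~(f): you treat only \emph{basic} changes of relators, inferring $D(rr_1^{\pm1})\ge D(r)$ from $W$-goodness. The lemma as stated covers arbitrary $D$-good changes of relators, for which the condition $H_{D,R'}(t_0)\le H_{D,R}(t_0)$ at a single $t_0$ does not obviously promote to a coefficientwise inequality. The paper's own proof is equally terse here (``follows from the proof of~(d)''), and in fact~(f) is only ever invoked for changes of generators and cleanups (in Lemma~\ref{desclemma} and the proof of Theorem~\ref{descseq}(b)), so this does not affect anything downstream---but it is worth being aware that this case is not fully nailed down by either argument.
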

\begin{proof}
(a) If $T$ is a change of generators, the assertion holds by definition.
If $T$ is a change of relators or a cleanup, the assertion is obvious.
If $T$ is a $p$-descent, the assertion holds by Lemma~\ref{weight2}(b)(ii).

\vskip .1cm
(b) For any $f,g\in F(X)$ we have $W([f,g])\leq W(f)W(g)$, and thus
$$W([r,\underbrace{x,\ldots, x]}_{k\mbox{ times }})\leq W(r)W(x)^k=W(r)\tau^k,$$
which implies that $W(R')\leq c\, W(R)$. Furthermore, computation of leading terms in the proof of
Lemma~\ref{weight2}(b)(ii) shows
that $W([y,\underbrace{x,\ldots, x]}_{k\mbox{ times }})= W(y)\tau^k$
for any $y\in X\setminus\{x\}$ and $W(x^p)=\tau^p$.
Thus, $W(X')=(W(X)-\tau)c+ \tau^p$, so
$W(X')-1=c W(X)-c\tau +\tau^p-1=c(W(X)-1)$.
\vskip .1cm
(c) If $T$ is a change of generators, the equality $W(X')=W(X)$
holds by Proposition~\ref{weight6}. The other assertions are obvious.
\vskip .1cm

(d) It is clear that $W(X')=W(X\setminus C)= W(X)-W(C)$ and $W(R\setminus C)=W(R)-W(C)$.
Recall that $R'=\phi(R\setminus C)$ where $\phi:F(X)\to F(X')$ is the homomorphism
which acts as identity on $X'$ and sends $C$ to $\{1\}$. Clearly, application of $\phi$
cannot increase the weights, so $W(R')\leq W(R\setminus C)=W(R)-W(C)$.
\vskip .1cm

(e) First note that (b) applied to the $(D,t)$-weight function
implies that (e) holds as a numerical inequality for any $t\in (0,1)$.

The proof of (b) also shows that $H_{X'}(t)-1=(H_X(t)-1)\cdot \frac{1-t^{pn}}{1-t^n}$
as power series. Since $H_{\sqcup_{i=1}^{\infty} S_{i}}(t)=\sum_{i=1}^{\infty}
H_{S_i}(t)$ as power series for pairwise disjoint sets $\{S_i\}$,
to finish the proof of (e) it suffices to check that
\begin{equation}
\label{powerseries}
\frac{H_{R'}(t)}{1-t}\leq \frac{H_R(t)}{1-t}\cdot \frac{1-t^{pn}}{1-t^n}
\mbox{ as power series when } |R|=1.
\end{equation}
If $R=\{r\}$ with $D(r)=m$, then $H_R(t)=t^m$, and the proof of (b) shows that
$H_{R'}(t)=\sum_{i=0}^{p-1}t^{m+in+\eps_i}$
with $\eps_i\in\dbZ_{\geq 0}\cup {\infty}$ (where we set $t^{\infty}=0$).
Thus, $$H_R(t)\cdot \frac{1-t^{pn}}{1-t^n}-H_{R'}(t)=
\sum_{i=0}^{p-1}t^{m+in}(1-t^{\eps_i}).$$ This implies \eqref{powerseries}
since the power series $\frac{\sum_{i=0}^{p-1}t^{m+in}(1-t^{\eps_i})}{1-t}$
clearly has non-negative coefficients.

(f) follows from the proof of (d) similarly to how (e) was deduced from the proof of (b).

(g) The assertion appears to be automatic, but it is not. Let $F=F(X)$, $F'=F(X')$
and $\pi: F\to G$ the natural surjection. Given $g\in G'$ and $f\in F$ with $\pi(f)=g$,
we need to show that there exists $f'\in F'$ with $\pi(f')=g$ and $D(f')\geq D(f)$.

If $T$ is a change of generators or relators, then $F'=F$ and there is nothing to prove.
If $T$ is a $p$-descent, the assertion is still clear since by construction $F'=\pi^{-1}(G')$.
Finally, if $T:(X,R)\to (X',R')$ is the cleanup of a set $C\subseteq X\cap R$, we put $f'=\phi(f)$
where $\phi: F\to F'$ is the homomorphism which acts as identity on $X'$ and sends $C$ to $\{1\}$.
Then $f'\in F'$, and it is clear that $\pi(f')=\pi(f)=g$ and $D(f')\geq D(f)$.
\end{proof}

\subsection{Finitary version of the GGS inequality}

In this subsection we prove a fundamental inequality relating
the Hilbert series of a presentation of a pro-$p$ group $G$
with that of a suitable presentation of a finite index subgroup of $G$.
In particular, this inequality implies that the generalized GS condition
is preserved under the passage to finite index subgroups.

\begin{Theorem}
\label{descseq}
Let $(X,R)$ be a presentation and $D$ a degree function on $(F(X),X)$.
Let $G=\Gp(X,R)$ and $\{G_n\}$ the $D$-filtration of $G$. Let $K$ be a finite
index subgroup of $G$, and for each $n\in\dbN$ define $c_n(G/K)=\log_p [KG_n : KG_{n+1}]$
(note that $c_n=0$ for sufficiently large $n$). The following hold:

\noindent
(a) There exists a $D$-good transformation
$(X,R)\to (X',R')$ such that
\begin{itemize}
\item[(i)] $K=\Gp(X',R')$

\item[(ii)] The following inequality of power series holds:
\begin{equation}
\label{keyformula}
\frac{1-H_{X'}(t)+H_{R'}(t)}{1-t}\leq \frac{1-H_{X}(t)+H_{R}(t)}{1-t}\prod_{n=1}^{\infty} \left( \frac{1-t^{pn}}{1-t^n}\right)^{c_n(G/K)}
\end{equation}
where all Hilbert series are with respect to $D$.
\end{itemize}
(b) Assume in addition that $K\subseteq G_m$ for some $m\in\dbN$. Then in (a) we can
require that $D(x)\geq m$ for any $x\in X'$ (and hence $D(f)\geq m$ for any $f\in F(X')$).
\end{Theorem}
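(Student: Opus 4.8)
The plan is to prove Theorem~\ref{descseq} by induction on the index $[G:K]$, reducing the general case to the two basic situations handled in Subsection~3.2: $p$-descents and pair eliminations (the latter built from a change of generators followed by a cleanup). First I would dispose of the case $[G:K]=1$, where we may take $(X',R')=(X,R)$ after possibly performing pair eliminations and changes of relators to put the presentation in a normalized form; here the product on the right of \eqref{keyformula} is empty since all $c_n(G/K)=0$, and the inequality becomes an equality (or follows from Lemma~\ref{valuations}(f)). For the inductive step, since $K$ has finite index in the pro-$p$ group $G$, there is an open subgroup $H$ with $K\subseteq H$ and $[G:H]=p$. The strategy is: (1) find a $D$-good transformation $(X,R)\to(\Xgal,\Rgal)$ realizing $H=\Gp(\Xgal,\Rgal)$, controlling the change in Hilbert series by exactly one factor $\left(\frac{1-t^{pn}}{1-t^n}\right)$; then (2) apply the induction hypothesis to the pair $H\supseteq K$ to descend the rest of the way, absorbing the remaining factors of the product.

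The key technical point in step~(1) is choosing $H$ and the transformation correctly so that the single factor that appears is $\left(\frac{1-t^{pn}}{1-t^n}\right)^{c_n(G/H)}$ with the right exponent, and so that the factors accumulated over the whole induction telescope into $\prod_n\left(\frac{1-t^{pn}}{1-t^n}\right)^{c_n(G/K)}$. The subgroups of index $p$ in $G$ correspond to hyperplanes in $G/\Phi(G)\cong G/G_1'$ (more precisely, to the $D$-filtration one must look at which $G_n$ the relevant generator "sits in"). Concretely: by Claim~\ref{Frattini} and Lemma~\ref{weight1}, after a $W$-good change of generators we may assume $X=\{x_1,\dots,x_d\}$ with $H$ being the unique index-$p$ subgroup containing $x_2,\dots,x_d$, i.e.\ $H=\la x_2,\dots,x_d\ra\Phi(F)$ pushed down to $G$, where $x=x_1$ is chosen with $n=D(x)$ appropriate. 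However, a $p$-descent at $x$ requires the hypothesis \eqref{eq:pdnr} that no relator is $X$-linear in $x$; if some relator $r$ is $X$-linear in $x$, one must first perform a pair elimination of $(x,r)$, which by Lemma~\ref{valuations}(f) does not hurt the Hilbert-series inequality but \emph{lowers} the number of generators, so we must handle that bookkeeping (this is where $c_n(G/H)$ can be $0$: if every index-$p$ subgroup we would descend through actually contains $\pi(x)$ forcibly, or the relator structure already forces containment). So the factor $\left(\frac{1-t^{pn}}{1-t^n}\right)$ appears exactly once per genuine $p$-descent, and we must match these against the quantities $c_n(G/K)=\log_p[KG_n:KG_{n+1}]$.

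For the telescoping to work out, I would set up the induction more carefully in terms of the chain of subgroups. Given $K\subseteq G$ of index $p^\ell$, choose a chain $G=H_0\supset H_1\supset\cdots\supset H_\ell=K$ with $[H_{i}:H_{i+1}]=p$, chosen so that it is compatible with the $D$-filtration in the sense that each $H_{i+1}$ is an index-$p$ subgroup of $H_i$ "detected at level $n_i$" where $n_i$ is chosen according to which quotient $KG_{n}/KG_{n+1}$ still has the descent available. Then at each stage, applying step~(1) to $H_i\supseteq H_{i+1}$ multiplies the Hilbert-series bound by $\left(\frac{1-t^{pn_i}}{1-t^{n_i}}\right)$, and $\#\{i : n_i = n\}$ should equal $c_n(G/K)$ by a counting argument on the successive quotients $KG_n/KG_{n+1}$ (each $p$-descent at level $n$ accounts for one factor of $p$ in $[KG_n:KG_{n+1}]$). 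The composition of $D$-good transformations is $D$-good (Definition of $D$-good, part (d)), so $(X,R)\to(X',R')$ is $D$-good; part (i) $K=\Gp(X',R')$ follows since the composed subpresentation realizes $K$; and Lemma~\ref{valuations}(g) guarantees that the $D$-filtrations are compatible all the way down, which is what lets the counting $\#\{i:n_i=n\}=c_n(G/K)$ close up.

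For part~(b), under the extra hypothesis $K\subseteq G_m$, I would note that the $D$-filtration $\{G_n\}$ is itself a chain of open subgroups, and after reaching any presentation of $K$ via part~(a), one can perform additional $p$-descents "inside" — i.e.\ descend from the current presentation to one whose generators all have $D$-value $\geq m$ — because $K\subseteq G_m$ means $K$ is contained in each $G_1,\dots,G_m$, and each step $G_{j}\supseteq G_{j+1}$ (or the relevant refinement into index-$p$ pieces) is a $p$-descent that raises the minimal degree of a generator. More cleanly: since \eqref{keyformula} is an inequality and the factors $\left(\frac{1-t^{pn}}{1-t^n}\right)$ are harmless, one is free to first descend from $G$ all the way into $G_m$ (this only multiplies the right side by $\prod\left(\frac{1-t^{pn}}{1-t^n}\right)^{c_n(G/G_m)}$, which is absorbed since $c_n(G/K)\geq c_n(G/G_m)$ as $K\subseteq G_m$), \emph{then} run part~(a) for the pair $G_m\supseteq K$; the resulting presentation of $K$ has all generators of $D$-degree $\geq m$ because after the $p$-descents into $G_m$ every generator lies in $(G_m)$, and $D(f)\geq m$ for $f\in F(X')$ follows from property (iv) of degree functions together with $D(x)\geq m$ for each generator $x\in X'$.

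The main obstacle I anticipate is step~(1): correctly identifying, at each index-$p$ stage, \emph{which} level $n$ the factor $\left(\frac{1-t^{pn}}{1-t^n}\right)$ should carry, and proving that the multiset of these levels over the whole chain matches $\{c_n(G/K)\}$ exactly rather than just being bounded by it. This requires a clean argument that the successive-quotient numbers $[KG_n:KG_{n+1}]$ are "used up" one factor of $p$ at a time by the descents, which in turn uses Lemma~\ref{valuations}(g) (filtration compatibility is preserved under $D$-good transformations) in an essential way, plus the interplay between the condition \eqref{eq:pdnr} for $p$-descents and the fallback to pair elimination when a relator is linear in the descent variable. The Hilbert-series arithmetic itself — multiplying the bounds from Lemma~\ref{valuations}(e)(f) — is routine once the combinatorial setup is right.
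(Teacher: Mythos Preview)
Your approach to part~(a) is essentially the paper's: induction on $[G:K]$, peeling off one index-$p$ layer at a time via a $p$-descent, and tracking that exactly one $c_n$ drops by $1$ at each step. One simplification you missed: the ``fallback to pair elimination'' you worry about never occurs. After the $W$-good change of generators (Lemma~\ref{weight1}) putting $x$ outside $F'=\pi^{-1}(L)$, every relator lies in $\Ker\pi\subseteq F'=\la \Xgal\setminus\{x\}\ra\Phi(F)$, so by definition no relator is $\Xgal$-linear in $x$ and the $p$-descent applies directly. The combinatorial identity you flag as the ``main obstacle'' (that the multiset of descent levels matches $\{c_n(G/K)\}$) is exactly what the paper isolates as Lemma~\ref{desclemma}(c), proved by showing $L_i\neq G_i$ but $L_{i+1}=G_{i+1}$ for $i=D(x)$.

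Your argument for part~(b), however, has a genuine gap. You claim that after the $p$-descents into $G_m$ ``every generator lies in $G_m$'' and conclude that each has $D$-degree $\geq m$. But these are different statements: the generators $x'\in X'$ are elements of the free group $F$, and what is true is that they lie in $F(X')=\pi^{-1}(G_m)$, which in general strictly contains $F_m=\{f\in F: D(f)\geq m\}$ (precisely when some relator has degree $<m$). So $\pi(x')\in G_m$ does \emph{not} force $D(x')\geq m$. The paper's route is different and does not try to control generator degrees during the descent. Instead, one applies part~(a) as a black box to obtain some $(X',R')$ for $K$, and then repairs it: if $x\in X'$ has minimal degree $D(x)<m$, then because $\pi(x)\in K\subseteq G_m$ and the $D$-filtration is preserved (Lemma~\ref{valuations}(g)), there is $f\in F(X')$ with $D(f)\geq m$ and $\pi(f)=\pi(x)$; since $D(f)>D(x)$ and $x$ has minimal degree, $f$ is not $X'$-linear in $x$, so $f^{-1}x\in\Ker\pi$ is $X'$-linear in $x$, forcing some $r\in R'$ to be $X'$-linear in $x$ with $D(r)=D(x)$. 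Eliminating the pair $(x,r)$ is then $D$-good (Lemma~\ref{weight2}(b)(i)), preserves \eqref{keyformula} by Lemma~\ref{valuations}(f), and strictly shrinks $|X'|$, so the process terminates.
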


\begin{Remark} The following observations were made by Jaikin-Zapirain:

(1) If $K$ is a normal subgroup of $G$, Proposition~\ref{GGS2} implies that
\begin{equation}
\prod_{n=1}^{\infty} \left( \frac{1-t^{pn}}{1-t^n}\right)^{c_n(G/K)}=Hilb_{\Fp[G/K]}(t),
\label{key2}
\end{equation}
where $Hilb_{\Fp[G/K]}(t)$ is the classical Hilbert series of the group algebra $\Fp[G/K]$
corresponding to the degree function $D$. In fact, there is a natural way to define $Hilb_{\Fp[G/K]}(t)$
even when $K$ is not normal, and \eqref{key2} still holds -- we shall not prove this
fact as it will not be used in the sequel.

(2) In view of \eqref{key2}, Theorem~\ref{descseq} can be considered
as a finitary version of the GGS inequality \eqref{GGSg}. In fact,
\eqref{GGSg} can be deduced from Theorem~\ref{descseq} (this is what part (b)
is useful for). Indeed, take $m\in\dbN$, and let $(X',R')$ be the presentation
satisfying the conclusion of Theorem~\ref{descseq}(a)(b) with $K=G_m$.
It is easy to see that $Hilb_{\Fp[G/G_m]}(t)\equiv Hilb_{\Fp[[G]]}(t)\mod t^m$,
and Theorem~\ref{descseq}(b) ensures that $1-H_{X'}(t)+H_{R'}(t)\equiv 1\mod t^m$.
It follows that the (coefficient-wise) inequality of power series in \eqref{GGSg}
holds at least up to degree $m-1$. Since $m$ is arbitrary, we deduce \eqref{GGSg}.
\end{Remark}
\vskip .1cm

Theorem~\ref{descseq}(a) is reduced to the following lemma
by straightforward induction on $[G:K]$.

\begin{Lemma}
\label{desclemma}
Let $(X,R), G,D,K,\{G_n\}$ be as in Theorem~\ref{descseq}. Choose a subgroup $L\subseteq G$
of index $p$ such that $K\subseteq L$, and set $L_n=L\cap G_n$ and $c_n(L/K)=\log_p [KL_n : KL_{n+1}]$.
Then there exist a $D$-good transformation $(X,R)\to (X',R')$
and $i\in\dbN$ with the following properties:
\begin{itemize}
\item[(a)] $L=\Gp(X',R')$;
\item[(b)] $\{L_n\}$ is the $D$-filtration of $L$;
\item[(c)] $c_n(L/K)=c_n(G/K)$ for $n\neq i$ and $c_i(L/K)=c_i(G/K)-1$.
\item[(d)] The following inequality of power series holds:
$$
\frac{1-H_{X'}(t)+H_{R'}(t)}{1-t}\leq \frac{1-H_{X}(t)+H_{R}(t)}{1-t}\cdot \frac{1-t^{pi}}{1-t^i}.
$$
\end{itemize}
\end{Lemma}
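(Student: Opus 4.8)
The plan is to realise $L$ as $\Gp(X',R')$ for a presentation $(X',R')$ obtained from $(X,R)$ by a single $p$-descent, preceded by a $W$-good change of generators chosen so that the descent lands exactly on $L$; the index $i$ in (c) and (d) will then turn out to be the degree $D(x)$ of the generator $x$ at which we descend. Throughout write $F=F(X)$, let $\pi\colon F\to G$ be the natural surjection, fix $t\in(0,1)$, and let $W$ be the $(D,t)$-weight function on $(F,X)$; all estimates obtained below will hold uniformly in $t$, which yields the power-series statement in (d). The first step is to apply Lemma~\ref{weight1} to the index-$p$ subgroup $F'=\pi^{-1}(L)$ of $F$: it yields a free generating set $\widetilde X=\{\widetilde x_1,\dots,\widetilde x_d\}$ of $F$ with $W(\widetilde x_i)=W(x_i)$ for all $i$ and with $\widetilde X\setminus\{x\}\subseteq F'$ for some $x=\widetilde x_j$. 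Since $W(\widetilde x_i)=W(x_i)$ for every $i$, Lemma~\ref{weight2}(b)(i) shows that $W$ is again the $(D,t)$-weight function on $(F,\widetilde X)$, so the change of generators $(X,R)\to(\widetilde X,R)$ is $D$-good and leaves both $G=\Gp(\widetilde X,R)$ and its $D$-filtration $\{G_n\}$ unchanged. Regarding $F/\Phi(F)$ as an $\Fp$-vector space, the $d-1$ images of $\widetilde X\setminus\{x\}$ form part of a basis and all lie in the hyperplane $F'/\Phi(F)$, hence span it; therefore $\la\widetilde X\setminus\{x\}\ra\Phi(F)=F'$ and the image of $x$ lies outside $F'/\Phi(F)$. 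Because $\Ker\pi\subseteq F'$, the images of the relators in $R$ lie in the span of the images of $\widetilde X\setminus\{x\}$, so no relator of $R$ is $\widetilde X$-linear in $x$, i.e.\ condition \eqref{eq:pdnr} holds for $(\widetilde X,R)$. Performing the $p$-descent at $x$, $(\widetilde X,R)\to(X',R')$, the construction of the $p$-descent gives $\Gp(X',R')=\pi\big(\la\widetilde X\setminus\{x\}\ra\Phi(F)\big)=\pi(F')=L$; the composite $(X,R)\to(\widetilde X,R)\to(X',R')$ is the desired $D$-good transformation, establishing (a).

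Now set $i=D(x)$, which is a positive integer since $x\ne 1$ in $F$. Part (d) follows by composing Lemma~\ref{valuations}(f) for the change of generators with Lemma~\ref{valuations}(e) for the $p$-descent at $x$, which contributes exactly the factor $\frac{1-t^{pi}}{1-t^i}$. Part (b) follows from Lemma~\ref{valuations}(g) applied to the $p$-descent: since it is performed on a presentation of $G$ whose $D$-filtration is $\{G_n\}$, the $D$-filtration of $L=\Gp(X',R')$ is $\{L\cap G_n\}=\{L_n\}$.

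It remains to prove (c). For $n\in\dbN$ set $e_n=\log_p[KG_n:KL_n]$. Since $K\subseteq L$, any element $kh$ with $k\in K$ and $h\in G_n$ that lies in $L$ has $h\in L\cap G_n=L_n$, so $KG_n\cap L=KL_n$; as $L$ is normal of index $p$ in $G$ this gives $[KG_n:KL_n]=[G_nL:L]=[G_n:L_n]\in\{1,p\}$, equal to $p$ exactly when $G_n\not\subseteq L$. Comparing $c_n(G/K)=\log_p[KG_n:KG_{n+1}]$ with the analogous expression for $L$ yields the telescoping identity $c_n(G/K)-c_n(L/K)=e_n-e_{n+1}$. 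Now $G_n\not\subseteq L$ means there is $f\in F$ with $D(f)\ge n$ and $\pi(f)\notin L$, equivalently $f\notin F'=\pi^{-1}(L)$, equivalently $f$ is $\widetilde X$-linear in $x$; the element $x$ itself is $\widetilde X$-linear in $x$ with $D(x)=i$, while by Lemma~\ref{weight7}(d) every element that is $\widetilde X$-linear in $x$ has $W$-weight at least $W(x)$, hence $D$-degree at most $i$. So $G_n\not\subseteq L$ if and only if $n\le i$, whence $e_n=1$ for $n\le i$ and $e_n=0$ for $n>i$; consequently $c_n(G/K)-c_n(L/K)=e_n-e_{n+1}$ is $0$ for $n\ne i$ and $1$ for $n=i$, which is precisely (c). I expect the delicate point to be exactly this last step: showing that the index $i$ forced on us by (d), namely the degree of the descent generator, coincides with the level at which the $c_n$'s drop, which rests on the equivalence $G_n\not\subseteq L\iff n\le D(x)$ together with the weight estimate of Lemma~\ref{weight7}(d); the rest is a systematic bookkeeping exercise in the transformation calculus of Section~3.
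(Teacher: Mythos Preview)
Your proof is correct and follows essentially the same route as the paper: change generators via Lemma~\ref{weight1} so that $\widetilde X\setminus\{x\}\subset F'=\pi^{-1}(L)$, verify the $p$-descent hypothesis \eqref{eq:pdnr} from $\Ker\pi\subseteq F'$, and perform the $p$-descent at $x$; parts (a), (b), (d) then follow from Lemma~\ref{valuations}(e)(f)(g) exactly as in the paper. For (c) the paper reduces to $c_n(G/K)-c_n(L/K)=c_n(G)-c_n(L)$ and then shows $L_i\neq G_i$, $L_{i+1}=G_{i+1}$, whereas you telescope with $e_n=\log_p[KG_n:KL_n]=\log_p[G_n:L_n]$ and compute $e_n$ directly; but both arguments hinge on the same key point, namely $G_n\not\subseteq L\iff n\le D(x)$, established via Lemma~\ref{weight7}(d).
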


\begin{proof} Let $F=F(X)$, $\pi:F\to G$ the natural surjection, and
$F'=\pi^{-1}(L)$, so that $[F:F']=p$. By Lemmas~\ref{weight1} and~\ref{weight2}(b)(i)
there exists a free generating set $\Xgal$ of $F$ such that
\begin{itemize}
\item[(i)] The change of generators $(X,R)\to (\Xgal,R)$ is $D$-good.
\item[(ii)] $F'\supseteq \Xgal\setminus\{x\}$.
\end{itemize}
We claim that
\begin{itemize}
\item[(iii)] $R$ contains no elements which are $\Xgal$-linear in $x$.
\end{itemize}
Indeed, by (ii) we have $F'=\la \Xgal\setminus \{x\}\ra\Phi(F)$, so (iii)
simply asserts that $R\subset F'$. Suppose this is not the case.
Then $\la R\ra F'=F$, whence $L=\pi(F')=\pi(\la R\ra F')=G$, a contradiction.

Let $(X',R')$ be the presentation obtained form $(\Xgal,R)$
by applying the $p$-descent at $x$ (such $p$-descent is possible by condition (iii)).
Condition (ii) implies that $\Gp(X',R')=L$, so (a) holds. Note that (b) holds by Lemma~\ref{valuations}(g).

Now let $i=D(x)$. Inequality in (d) holds by condition (i)
and Lemma~\ref{valuations}(e)(f). It remains to prove (c). For a subgroup $H$ of $G$ we set $H_n=H\cap G_n$ and $c_n(H)=\dim_{\Fp}(H_n/H_{n+1})$.
Then it is easy to see that $c_n(G/K)=c_n(G)-c_n(K)$ and $c_n(L/K)=c_n(L)-c_n(K)$. Since $L$
is a subgroup of index $p$ in $G$, there exists unique $j\in\dbN$ such that $c_n(L)=c_n(G)$ for $n\neq j$
and $c_j(L)=c_j(G)-1$, and we only need to prove that $j=i$. It will suffice to show that $L_i\neq G_i$
and $L_{i+1}=G_{i+1}$.

The first assertion is clear since $\pi(x)\in G_i\setminus L$ by construction.
For the second assertion note that any $f\in F\setminus F'$ is $\Xgal$-linear in $x$
and thus $D(f)\leq D(x)=i$ by Lemma~\ref{weight7}(d). Hence
$G_{i+1}=\{\pi(f) : f\in F \mbox{ and }D(f)\geq i+1\}\subseteq \pi(F')=L$,
and so $L_{i+1}=G_{i+1}\cap L=G_{i+1}$.
\end{proof}

\begin{proof}[of Theorem~\ref{descseq}(b)]
Suppose that the presentation $(X',R')$ in Theorem~\ref{descseq}(a) does not satisfy the 
conclusion of (b), so there exists $x\in X'$ with $D(x)<m$. We can assume that $x$
has the smallest $D$-degree among all elements of $X'$.

Let $F'=F(X')$, $\pi: F'\to K$ the natural surjection and $N=\Ker\pi$.
Since $K\subseteq G_m$, there exists $f\in F'$ such that $D(f)\geq m$ and
$\pi(f)=\pi(x)$ (here we use Lemma~\ref{valuations}(g)). Note
that $f$ is not $X'$-linear in $x$ (for otherwise $D(f)= D(x)<m$). Thus,
$f^{-1}x$ is an element of $N$ which is $X'$-linear in $x$, whence some $r\in R'$
must be $X'$-linear in $x$ (as $N=\la R'\ra^{F'}$). Note that $D(r)=D(x)$ by the choice of $x$.

Let $(X'',R'')$ be the presentation obtained from $(X',R')$ by eliminating the pair $(x,r)$.
Since $D(r)=D(x)$, this pair elimination is $D$-good by Lemma~\ref{weight2}(b)(i),
so by Lemma~\ref{valuations}(f) inequality~\eqref{keyformula} holds with $(X',R')$ replaced by $(X'',R'')$.

If the presentation $(X'',R'')$ does not satisfy the conclusion of
Theorem~\ref{descseq}(b), we repeat the above procedure. Since each application
of this procedure decreases the size of the generating set by $1$, after finitely many
steps we obtain a presentation with desired property.
\end{proof}

\subsection{GGS condition for subgroups of GGS pro-$p$ groups}

\def\Pres{{\mathbf{Pres}}}

\begin{Definition}\rm Let $w,\delta$ and $\eps$ be positive real numbers. We will say
that a weighted presentation $(X,R,W)$ \textit{satisfies the condition $\Pres(w,\delta,\eps)$} if
\begin{itemize}
\item[(a)] $W(X)=w$ and $W(R)<\delta$;
\item[(b)] $\sum_{w \in \Im(W)} w <\eps$ where
$\Im(W)=\{W(f) : f\in F(X)\}$ is the image of $W$.
\end{itemize}
\end{Definition}

Our ultimate goal for applications in the next section is to
prove the following result.

\begin{Theorem}
\label{deepdescent} Let $G$ be a GGS pro-$p$ group, and let $w,\delta$ and $\eps$
be any positive real numbers. Then there exists an open normal subgroup $H$ of $G$
and a weighted presentation $(X,R,W)$ of $H$ which satisfies $\Pres(w,\delta,\eps)$.
\end{Theorem}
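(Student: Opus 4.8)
The plan is to obtain $H$ and the weighted presentation from Theorem~\ref{descseq} together with a careful choice of weight function. Fix a presentation $(X_0,R_0)$ of $G$, a degree function $D_0$, and $t_0\in(0,1)$ with $1-H_{D_0,X_0}(t_0)+H_{D_0,R_0}(t_0)=-c_0<0$. The subgroup $H$ will be a term $G_m$ of the $D_0$-filtration of $G$ with $m$ large; each $G_n$ is open in $G$ and, since $[G_n,G]=[G_n,G_1]\subseteq G_{n+1}\subseteq G_n$, also normal, so $G_m$ is a legitimate choice for $H$.

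First I would apply Theorem~\ref{descseq}(a),(b) with $K=H=G_m$. As $(X_0,R_0)$ satisfies the GGS condition at $t_0$, Corollary~\ref{cor:GS} says $Hilb_{D_0,\Fp[[G]]}(t_0)$ diverges; since $Hilb_{\Fp[G/G_m]}(t)\equiv Hilb_{D_0,\Fp[[G]]}(t)\pmod{t^m}$ has nonnegative coefficients, $Hilb_{\Fp[G/G_m]}(t_0)\to\infty$ as $m\to\infty$. Using Theorem~\ref{descseq}(a)(ii) and the identification (valid for normal $H$, by the Remark after Theorem~\ref{descseq}) of $\prod_{n}\left(\frac{1-t^{pn}}{1-t^n}\right)^{c_n(G/H)}$ with $Hilb_{\Fp[G/H]}(t)$, the descent produces a presentation $(X_1,R_1)$ of $H$ and a degree function $D_1$ with
$$1-H_{D_1,X_1}(t_0)+H_{D_1,R_1}(t_0)\ \le\ -c_0\,Hilb_{\Fp[G/H]}(t_0),$$
which is as negative as desired; in particular $H_{D_1,X_1}(t_0)>w$. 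By part (b) we may in addition require $D_1(x)\ge m$ for every $x\in X_1$, hence $D_1(f)\ge m$ for every $f\in F(X_1)\setminus\{1\}$. Take $W$ to be the $(D_1,t_1)$-weight function, where $t_1\in(0,t_0)$ is chosen so that $H_{D_1,X_1}(t_1)=w$; this is possible by the intermediate value theorem, since $t\mapsto H_{D_1,X_1}(t)$ increases continuously from $0$ to a value exceeding $w$. Then $\Im(W)\setminus\{0\}\subseteq\{t_1^{\,n}:n\ge m\}$, so $\sum_{v\in\Im(W)}v\le t_1^{\,m}/(1-t_1)<t_0^{\,m}/(1-t_0)$, which is $<\eps$ once $m$ is large. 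Hence $(X_1,R_1,W)$ satisfies $W(X)=w$ and clause (b) of $\Pres(w,\delta,\eps)$.

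The last requirement, $W(R_1)=H_{D_1,R_1}(t_1)<\delta$, is the crux, and the step I expect to be the main obstacle. It is not automatic: along the descent the relator Hilbert series gets multiplied by a product equal to $Hilb_{\Fp[G/H]}(t_0)$, which we forced to diverge, so $H_{D_1,R_1}(t_0)$ is typically huge, while the single equation $W(X_1)=w$ confines $t_1$ only to a long interval; and any presentation of $H$ with $|X_1|$ large must have about $|X_1|-d(H)$ relators linear in a generator, with $W(r)\ge W(x)$ whenever $r$ is linear in $x$, so a uniform weight on $X_1$ is impossible. The way I would proceed is, after the descent, to pass to a presentation of $H$ with as few redundant generators as possible --- ideally a minimal one $(A,R_A)$, so that $|A|=d(H)$ and $R_A\subseteq\Phi(F(A))$ --- and then equip $(F(A),A)$ with a degree function that concentrates almost all of the weight $w$ on the $d(H)$ generators (which are numerous, since $d(H)=d(G_m)\to\infty$ as $m\to\infty$) while forcing the relators deep: as $R_A\subseteq\Phi(F(A))$, each relator has degree at least twice the minimal generator degree, so its contribution is controlled by roughly $r(H)\,(w/d(H))^2$, and one must use that the relation rank $r(H)$ of a deep finite-index subgroup is small compared with $d(H)^2$ --- a structural property of Golod-Shafarevich groups, related to the subgroup-growth estimates --- to push this below $\delta$. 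The bookkeeping for this last step --- keeping $W(X)=w$ exactly, making $W(R)<\delta$, and verifying that the intervening transformations are $W$-good so that clause (b) survives --- is the real work; once it is carried out, $H=G_m$ together with the resulting weighted presentation satisfies $\Pres(w,\delta,\eps)$, as required.
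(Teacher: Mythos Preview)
Your first two steps are correct and match the paper's strategy: the descent to $H=G_m$ via Theorem~\ref{descseq} is exactly Theorem~\ref{deepdescent3}, and your use of part~(b) to force $D_1(x)\ge m$ and then pick $t_1<t_0$ so that $H_{D_1,X_1}(t_1)=w$ is a clean way to secure clause~(b) of $\Pres(w,\delta,\eps)$ (the paper does this slightly differently, but your argument is fine).

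The gap is the step you flag yourself, and your proposed remedy does not work. Passing to a minimal presentation $(A,R_A)$ and bounding $W(R_A)$ by $r(H)\,(w/d(H))^2$ fails outright when $r(G)=\infty$: a GGS presentation is allowed to have infinitely many relators (only $H_R(t_0)<\infty$ is required), and then $r(H)=\dim H^2(H,\Fp)=\infty$ for every open $H$, so your bound is vacuous. Even when $r(G)<\infty$, the assertion that $r(H)/d(H)^2\to 0$ amounts (via the Euler characteristic $r(H)-d(H)+1=[G:H](r(G)-d(G)+1)$) to $d(G_m)^2/[G:G_m]\to\infty$, which you neither prove nor cite; this is not a standard subgroup-growth fact and does not follow from anything earlier in the paper. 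Finally, moving from $(X_1,R_1)$ to a minimal presentation is not a $W$-good transformation, so you also lose the carefully built weight structure you needed for the $\eps$ clause.

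The paper's route to $W(R)<\delta$ (Theorem~\ref{deepdescent4}) avoids minimal presentations entirely and works directly with the descent presentation. The key mechanism, which your sketch gestures at but does not exploit, is Lemma~\ref{contraction}: under a $c$-contraction the weight of every element of $\Phi(F)$ drops by at least $c^2$, while generator weights drop only by $c$; hence the ratio $W(R_{\text{nonlinear}})/W(X)$ shrinks by a factor of $c$. Since $1-W(X)+W(R)<-M$ forces $W(X_{\text{good}})>M$, one can take $c=W(X_{\text{good}})/w>M/w$, and after contraction the non-linear relators contribute less than $w^2/M<\delta$. The linear relators cannot be handled this way; instead one first ``optimizes'' them (Step~1 of the proof) so that each $r_i$ is linear in a distinct $x_i$ with $W(r_i)\ge W(x_i)$, then after the contraction those with $\lambda_i=W(r_i)/W(x_i)\le c$ satisfy $W_a(r_i)=W_a(x_i)$ and can be removed by a $W_a$-good change of generators followed by a cleanup, while those with $\lambda_i>c$ shrink by $c^2$ like non-linear ones. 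This is the content that replaces your unproved claim about $r(H)/d(H)^2$.
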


However, the next theorem should probably be considered the key
result of this section.

\begin{Theorem}
\label{deepdescent3} Let $G$ be a GGS pro-$p$ group. Then for any $M>0$
there exists an open normal subgroup $K$ of $G$ and a weighted presentation $(X,R,W)$
of $K$  such that
$$1-W(X)+W(R)<-M.$$
\end{Theorem}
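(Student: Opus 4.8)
The plan is to derive Theorem~\ref{deepdescent3} from the finitary GGS inequality (Theorem~\ref{descseq}) by descending to the subgroups $K=G_m$ of the $D$-filtration, using the fact that the ``amplification factor'' on the right of \eqref{keyformula} is a truncation of the classical Hilbert series of $\Fp[[G]]$, which diverges precisely because $G$ is GGS. Concretely, I would first fix, via the definition of a GGS pro-$p$ group, a pro-$p$ presentation $(X,R)$ with $G=\Gp(X,R)$, a degree function $D$ on $(F(X),X)$, and $t_0\in(0,1)$ with $C:=-\big(1-H_{X}(t_0)+H_{R}(t_0)\big)>0$, all Hilbert series being taken with respect to $D$. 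Let $\{G_n\}$ be the $D$-filtration of $G$ and $c_n(G)=\dim_{\Fp}(G_n/G_{n+1})$; each $G_n$ is open (as recorded after the definition of $D$-filtrations) and normal, since $D$ is conjugation-invariant (for $f,g\in F(X)$ one has $gfg^{-1}-1=g(f-1)g^{-1}$ in $\Fp\lla U\rra$ with $g$ a unit, so $d(gfg^{-1}-1)=d(f-1)$).

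The heart of the argument is the divergence of the amplification factor at $t_0$. By Proposition~\ref{GGS2}, $\prod_{n=1}^{\infty}\left(\frac{1-t^{pn}}{1-t^n}\right)^{c_n(G)}=Hilb_{D,\Fp[[G]]}(t)$ as power series, and by Corollary~\ref{cor:GS} the value $Hilb_{D,\Fp[[G]]}(t_0)$ is infinite. Since each factor $\frac{1-t_0^{pn}}{1-t_0^n}=1+t_0^n+\cdots+t_0^{(p-1)n}$ is $\ge 1$, the partial products $P_m:=\prod_{n=1}^{m-1}\left(\frac{1-t_0^{pn}}{1-t_0^n}\right)^{c_n(G)}$ increase monotonically to $+\infty$. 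I would then fix $m$ with $P_m>M/C$ and set $K=G_m$, an open normal subgroup of $G$. A short computation shows $c_n(G/K)=\log_p[KG_n:KG_{n+1}]$ equals $c_n(G)$ for $n<m$ and $0$ for $n\ge m$ (since $G_m\subseteq G_{n+1}$ when $n<m$, while $G_n,G_{n+1}\subseteq G_m$ when $n\ge m$), so the infinite product in \eqref{keyformula} collapses to $P_m$.

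Applying Theorem~\ref{descseq}(a) to this $K$ yields a $D$-good transformation $(X,R)\to(X',R')$ with $K=\Gp(X',R')$ and
$$\frac{1-H_{X'}(t_0)+H_{R'}(t_0)}{1-t_0}\le \frac{1-H_{X}(t_0)+H_{R}(t_0)}{1-t_0}\,P_m=-\,\frac{C}{1-t_0}\,P_m,$$
the numerical inequality at $t_0$ being a consequence of the coefficientwise inequality of power series in \eqref{keyformula} (all series converge at $t_0$ and $t_0>0$). Multiplying by $1-t_0>0$ gives $1-H_{X'}(t_0)+H_{R'}(t_0)\le -C P_m<-M$. Taking $W$ to be the $(D,t_0)$-weight function on $(F(X'),X')$ — legitimate because the transformation is $D$-good, by Lemma~\ref{valuations}(a) — one has $W(X')=H_{X'}(t_0)$ and $W(R')=H_{R'}(t_0)$, so $(X',R',W)$ is a weighted presentation of $K$ with $1-W(X')+W(R')<-M$, as required.

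I do not expect a genuine obstacle here; the essential point is to recognize that the amplification must be carried out at the very $t_0$ witnessing the GGS condition, so that Corollary~\ref{cor:GS} forces $P_m\to\infty$, and that taking $K=G_m$ concentrates the amplification in the low-degree factors $\frac{1-t_0^{pn}}{1-t_0^n}$, which stay bounded away from $1$; a more naive choice of small subgroup could leave the product near $1$ and achieve nothing.
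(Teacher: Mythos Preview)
Your proof is correct and follows essentially the same route as the paper: descend to $K=G_m$ via Theorem~\ref{descseq}, identify the amplification factor with a partial product of $Hilb_{D,\Fp[[G]]}(t_0)$ via Proposition~\ref{GGS2}, and invoke Corollary~\ref{cor:GS} to force it to diverge. The paper's argument is the same up to notation (their $\mu,\mu_n$ are your $C$ and $-\big(1-W(X')+W(R')\big)$), and you have supplied a few extra justifications (normality of $G_n$, convergence at $t_0$, legitimacy of the restricted weight function) that the paper leaves implicit.
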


\begin{proof}[of Theorem~\ref{deepdescent3}]
We are given that $G$ has a presentation $(X,R)$ such that
$1-H_{X}(t_0)+H_{R}(t_0)<0$ for some
$0<t_0<1$ and degree function $D$ on $(F(X),X)$ (where all Hilbert series
are with respect to $D$). Let $W$ be the $(D,t_0)$-weight function on $(F(X),X)$,
that is, $W(f)=t_0^{D(f)}$.

Let $\{G_n\}$ be the $D$-filtration of $G$. For each $n\in\dbN$ let $(X_n,R_n)$ be a presentation
satisfying the conclusion of Theorem~\ref{descseq} with $K=G_{n+1}$.
Let
\begin{align*}
&\mu=-(1-W(X)+W(R))= -(1-H_{X}(t_0)+H_{R}(t_0))&\\
&\mu_n= -(1-W(X_n)+W(R_n))=-(1-H_{X_n}(t_0)+H_{R_n}(t_0)) \mbox{ for }n\in\dbN.&
\end{align*}
Let $c_k=\dim_{\Fp}(G_k/G_{k+1})$, and define $c_k(G/G_{n+1})$ as in Theorem~\ref{descseq}.
Then clearly
$$c_k(G/G_{n+1})=\left\{
\begin{array}{ll}
c_k & \mbox{ if } k\leq n\\
0 & \mbox{ if } k> n
\end{array}
\right.$$
Thus, Theorem~\ref{descseq} yields $$\mu_n\geq \mu \prod_{k=1}^{n} \left(\frac{1-t_0^{pk}}{1-t_0^k}\right)^{c_k}.$$
Proposition~\ref{GGS2} and Corollary~\ref{cor:GS} imply that the infinite product
$\prod\limits_{k=1}^{\infty} \left(\frac{1-t_0^{pk}}{1-t_0^k}\right)^{c_k}$ diverges.
Since $\mu>0$, we have $\mu_n\to\infty$. The proof is complete.
\end{proof}

Our next result combined with Theorem~\ref{deepdescent3} implies Theorem~\ref{deepdescent}.

\begin{Theorem}
\label{deepdescent4}
Let $w,\delta,\eps$ and $M$ be positive real numbers, with $\delta,\eps<1$.
Let $K$ be a pro-$p$ group which has a weighted presentation $(X,R,W)$ such that $1-W(X)+W(R)<-M$.
\begin{itemize}
\item[(a)] Assume that $M>\max\{\frac{w^2}{\delta}, \frac{w}{\eps}\}$.
Then $K$ has a weighted presentation $(X',R',W')$ such that
$W'(X')=w$, $W'(R')<\delta$ and $\max\{W'(x): x\in X'\}<\eps$.

\item[(b)] Assume that $M> 4\max\{\frac{w^2}{\delta}, \frac{w}{\eps}\}$.
Then $K$ has a weighted presentation satisfying $\Pres(w,\delta,\eps)$.
\end{itemize}
\end{Theorem}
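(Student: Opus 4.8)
The plan is to build $(X',R',W')$ directly from the given data in three moves: first strip out of the presentation every relator that is linear in a generator; then replace $W$ by a rescaled weight function under which the generators are light and the total generator weight is exactly $w$; and finally bound the relator weight using the huge margin. The only tools needed are the group-preserving transformations of Section~3.2 (changes of generators and relators, cleanups, pair eliminations, adjunction of redundant generator--relator pairs) together with the freedom to choose the weight function, so it is really only the combinatorics of weights that has to be controlled.

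\emph{Reduction and rescaling.} Whenever some $r\in R$ is $X$-linear in some $x\in X$, Lemma~\ref{weight7}(d) gives $W(r)=W(x)$, so the elimination of the pair $(x,r)$ is $W$-good by Lemma~\ref{weight2}(b)(i), and by Lemma~\ref{valuations}(d),(f) it does not increase $1-W(X)+W(R)$ while lowering $|X|$ by one. Since $|X|$ is bounded below by the minimal number of generators $d(K)$ of $K$, this terminates at a weighted presentation of $K$ (still written $(X,R,W)$) with $1-W(X)+W(R)<-M$ in which $R\subseteq\Phi(F(X))$; in particular every $r\in R$ has $W$-degree at least twice the minimal $W$-degree $c$ of a generator, and $|X|>W(X)>M+1$. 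Now for $s>0$ the power $W^s$ is again a weight function; writing $W$ as a $(D,t_0)$-weight function (one may take $W$ integral by Lemma~\ref{easy}, shrinking $M$ arbitrarily little), $W^s$ is the $(D,t_0^{\,s})$-weight function. As $t$ runs over $(0,t_0]$ the Hilbert series $H_{D,X}(t)$ runs continuously and increasingly from $0$ to $W(X)>w$, so there is a unique $t_\star$ with $H_{D,X}(t_\star)=w$; take $X'=X$, $R'=R$ and let $W'$ be the $(D,t_\star)$-weight function, so that $W'(X')=w$ exactly and $\max_{x\in X'}W'(x)=t_\star^{\,c}$.

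\emph{Estimates.} Since $R'\subseteq\Phi(F(X'))$, the series $H_{D,R'}(t)/t^{2c}$ has non-negative coefficients, whence $W'(R')=H_{D,R'}(t_\star)\le (t_\star/t_0)^{2c}H_{D,R'}(t_0)=(t_\star/t_0)^{2c}W(R)$. In the model case in which all generators share the degree $c$, one has $H_{D,X}(t)=|X|t^{c}$, so $t_\star^{\,c}=w/|X|$, $t_0^{\,c}=W(X)/|X|$, and
$$W'(R')\ \le\ \Big(\tfrac{w}{W(X)}\Big)^{2}W(R)\ <\ \Big(\tfrac{w}{W(X)}\Big)^{2}W(X)\ =\ \frac{w^{2}}{W(X)}\ <\ \frac{w^{2}}{M}\ <\ \delta,$$
while $\max_{x}W'(x)=w/|X|<w/M<\eps$; this proves (a). For a general generator-degree profile I would, before rescaling, adjoin redundant pairs $(z,z)$ (which change neither $K$ nor $1-W(X)+W(R)$) placed at the minimal degree $c$, arranging that the degree-$c$ generators carry almost all of the generator weight --- exactly what is needed to push $(t_\star/t_0)^{2c}$ down to $O\!\big((w/W(X))^{2}\big)$ and rerun the display.

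\emph{Part (b) and the crux.} For (b), apply (a) with $\eps,\delta$ replaced by suitable smaller constants and then adjust $W'$, again by adjoining redundant pairs, so that every generator weight becomes a power $\eps_0,\eps_0^{2},\dots$ of a single small $\eps_0$ while $W'(X')=w$ is preserved; then $\Im(W')\subseteq\{1\}\cup\{\eps_0^{\,m}:m\ge 1\}$, so $\sum_{v\in\Im(W'),\,v<1}v\le \eps_0/(1-\eps_0)<\eps$, the relator weight grows only by a bounded factor, and the generator maxima stay small, with the factor $4$ in $M>4\max\{w^{2}/\delta,w/\eps\}$ absorbing these constant-factor losses. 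The one substantive point is the Estimates step: shrinking the generators necessarily drives $t_\star$ down, and one must show the relator contribution collapses \emph{quadratically faster} than the prescribed generator contribution --- this is what $M>w^{2}/\delta$ buys, and it is why all relators linear in a generator are removed first (so $H_{D,R'}$ vanishes to order $2c$). I expect the fiddliest part to be ensuring the minimal-degree generators hold enough of the weight that $t_\star$ is genuinely small, handled by the redundant-pair padding above.
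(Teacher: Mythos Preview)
Your reduction step contains a fatal error. You write that ``whenever some $r\in R$ is $X$-linear in some $x\in X$, Lemma~\ref{weight7}(d) gives $W(r)=W(x)$,'' but Lemma~\ref{weight7}(d) only gives the inequality $W(r)\ge W(x)$, and strict inequality is typical. For instance, take $X=\{x_1,x_2,x_3\}$ with $W(x_1)=1/100$, $W(x_2)=W(x_3)=1/2$, and $r=x_1[x_2,x_3]$: then $r$ is $X$-linear only in $x_1$, yet $W(r)=W([x_2,x_3])=1/4\neq 1/100$. In such cases the change of generators $X\to (X\setminus\{x\})\cup\{r\}$ is \emph{not} $W$-good (Lemma~\ref{weight2}(b)(i) requires the new generator weights to match the old ones), so you cannot eliminate the pair $(x,r)$ while keeping $W$ a weight function. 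The whole reduction to $R\subseteq\Phi(F(X))$ collapses, and with it the quadratic relator bound that the rest of your argument relies on.

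This is exactly the difficulty the paper's proof is designed to overcome. There, linear relators are not eliminated before contracting; instead, after an ``optimization'' of the relator set, a first contraction is applied, and only afterwards are the linear relators split into two classes: those ($R_{bad}^1$) whose weight has been driven down to equal the corresponding generator weight (so that a $W$-good cleanup becomes possible), and those ($R_{bad}^2$) whose weight contracted by at least $c^2$ anyway and can be treated like non-linear relators. Your padding idea for the non-uniform case has the same defect in a different guise: adjoining redundant pairs $(z,z)$ at the minimal degree $c$ reintroduces relators of degree $c$, whose contribution to $H_{D,R}(t_\star)$ scales only linearly, not quadratically, so the display cannot be rerun.
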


The desired weighted presentation of $K$ in Theorem~\ref{deepdescent4}(b) will be obtained
from the original weighted presentation $(X,R,W)$ using transformations
and contractions of weight functions as defined below.

\begin{Definition}\rm Let $X$ be a finite set, $F=F(X)$
and $W$ a weight function on $(F,X)$. Let $c\geq 1$ be a real number.
The \textit{$c$-contraction} of $W$ is the unique weight function $W'$ on $(F,X)$
such that $W'(x)=\frac{W(x)}{c}$ for any $x\in X$.
\end{Definition}

\begin{Lemma}
\label{contraction}
Let $X=\{x_1,\ldots,x_n\}$ be a finite set, $F=F(X)$, $W$ a weight function on $(F,X)$
and $W'$ the $c$-contraction of $W$ for some $c\geq 1$. The following hold:
\begin{itemize}
\item[(a)] $W'(f)\leq \frac{W(f)}{c}$ for any $f\in F$;
\item[(b)] If $f$ is not linear, that is, $f\in \Phi(F)$,
then $W'(f)\leq \frac{W(f)}{c^2}$.
\end{itemize}
Now take $f\in F\setminus \Phi(F)$, and write $f=f_L f_Q$ where
$f_L=x_1^{k_1}\ldots x_n^{k_n}$ with $0\leq k_i\leq p-1$ and $f_Q\in \Phi(F)$.
Let $\lam=W(f)/W(f_L)$. We have
\begin{itemize}
\item[(c)] $W'(f_L)=\frac{W(f_L)}{c}$;
\item[(d)] If $c\leq \lam$, then $W'(f)\leq \frac{W(f)}{c^2}$;
\item[(e)] If $c\geq \lam$, then $W'(f)=W'(f_L)$.
\end{itemize}
\end{Lemma}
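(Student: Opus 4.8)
The plan is to run everything through the Magnus embedding $\iota:F\hookrightarrow\Fp\lla U\rra$, $x_i\mapsto 1+u_i$, together with the weight functions $w$ and $w'$ on $\Fp\lla U\rra$ that are $X$-compatible with $W$ and $W'$, so that $W(f)=w(\iota(f)-1)$ and $W'(f)=w'(\iota(f)-1)$. The single structural fact that drives the whole lemma is this: since $W'(x_i)=W(x_i)/c$ forces $w'(u_i)=w(u_i)/c$, multiplicativity of weight functions gives $w'(m)=w(m)/c^{\deg m}$ for every monic $u$-monomial $m$, where $\deg m\geq 1$ is its total degree in the $u_i$. Expanding $\iota(f)-1=\sum_\alpha c_\alpha m_\alpha$ and using property (iv) in the definition of a weight function, we get $W(f)=\max\{w(m_\alpha):c_\alpha\neq 0\}$ and $W'(f)=\max\{w(m_\alpha)/c^{\deg m_\alpha}:c_\alpha\neq 0\}$. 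Since $\iota(f)-1$ lies in the augmentation ideal, every $m_\alpha$ has degree $\geq 1$, so $c\geq 1$ immediately yields (a); and by Claim~\ref{Xlinear} together with the definition of ``linear'', $f\in\Phi(F)$ is exactly the condition that $\iota(f)-1$ has no degree-$1$ term, hence all its monomials have degree $\geq 2$, which with $c\geq 1$ gives (b).

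For the remaining parts I would first dispatch (c). Writing $f_L=x_1^{k_1}\cdots x_n^{k_n}$, Lemma~\ref{weight7}(a) says $f_L$ is $X$-linear in $x_i$ precisely when $k_i\neq 0$, and $f\notin\Phi(F)$ guarantees at least one such $i$; applying Lemma~\ref{weight7}(b) to $W$ and then to $W'$ and comparing gives $W'(f_L)=(\max\{W(x_i):k_i\neq 0\})/c=W(f_L)/c$. Parts (d) and (e) are then handled in one stroke. By Lemma~\ref{weight7}(c) applied to the weight function $W'$ and the canonical factorization $f=f_Lf_Q$, we have $W'(f)=\max\{W'(f_L),W'(f_Q)\}$. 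Here $W'(f_L)=W(f_L)/c=W(f)/(c\lam)$ by (c) and the definition of $\lam$, while $W'(f_Q)\leq W(f_Q)/c^2\leq W(f)/c^2$ by (b) (since $f_Q\in\Phi(F)$) together with $W(f_Q)\leq W(f)$ from Lemma~\ref{weight7}(c) applied to $W$. Now compare the two bounds $W(f)/(c\lam)$ and $W(f)/c^2$: if $c\leq\lam$ then both candidates are $\leq W(f)/c^2$, which is (d); if $c\geq\lam$ then $W(f)/c^2\leq W(f)/(c\lam)=W'(f_L)$, so the $f_Q$-term is dominated by the $f_L$-term and $W'(f)=W'(f_L)$, which is (e).

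I do not expect a genuine obstacle: once the scaling identity $w'(m)=w(m)/c^{\deg m}$ is recorded, the lemma is pure bookkeeping. The only point needing a little care is the split in (d)/(e), where one must keep track of the fact that the linear part of $\iota(f)-1$ (i.e.\ $f_L$) has its weight rescaled by exactly $c^{-1}$ while the genuinely quadratic-and-higher part is rescaled by at most $c^{-2}$, and that $\lam=W(f)/W(f_L)\geq 1$ measures precisely how far $W(f)$ is from being realized on $f_L$; I would be careful to note that $\lam\geq 1$ follows from $W(f)=\max\{W(f_L),W(f_Q)\}\geq W(f_L)$ and that $W(f_L)>0$ because $f\notin\Phi(F)$ forces $f_L\neq 1$.
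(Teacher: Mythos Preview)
Your argument is correct and is essentially the paper's own proof with the details filled in: the paper says (a) is obvious, (b) is clear from Claim~\ref{Xlinear}, (c) follows from Lemma~\ref{weight7}(b), and (d)--(e) follow from (a)--(c) together with Lemma~\ref{weight7}(c), which is exactly what you have unpacked via the scaling identity $w'(m)=w(m)/c^{\deg m}$ on monomials. The only thing I would add is that your care about $\lam\geq 1$ and $W(f_L)>0$ is appropriate and justified exactly as you indicate.
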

\begin{proof} (a) is obvious, (b) is clear in view of Claim~\ref{Xlinear},
(c) follows directly from Lemma~\ref{weight7}(b), and (d) and (e) follow
easily from (a),(b),(c) and Lemma~\ref{weight7}(c).
\end{proof}

\begin{proof}[of Theorem~\ref{deepdescent4}]
(a) We start with a special case when $R$
contains no linear relators (we warn the reader that this
special case occurs very rarely in practice). In this case we set
$(X',R')=(X,R)$ and define the new weight function $W'$ to be the $c$-contraction
of $W$ where $c=W(X)/w$.

Clearly, $W'(X)=W(X)/c$. Since $1-W(X)+W(R)<-M$, we have $W(X)>M$
and $W(X)>W(R)$.
Thus, Lemma~\ref{contraction}(b) yields
$$W'(R)\leq\frac{W(R)}{c^2}=\frac{W(R)\cdot w^2}{W(X)^2}< \frac{w^2}{W(X)}<
\frac{w^2}{M}<\delta.$$
Since $c>\frac{M}{w}$, for any $x\in X$ we have
$$W'(x)=\frac{W(x)}{c}<\frac{1}{c}<\frac{w}{M}<\eps.$$

\textit{General case:} We need to find a way to treat linear relators efficiently.
Here is the idea. Let us say that a subset $C$ of $R$ is $W$-regular
if there exists a free generating set $\Xgal$ of $F=F(X)$
such that $C\subseteq \Xgal$ and $W$ is a weight function on $(F,\Xgal)$
(note that $W$-regular subsets may only contain linear relators,
but for a linear relator $r$ it may happen that the singleton set
$\{r\}$ is already not $W$-regular). If $C$ is a $W$-regular
set of relators, we can remove
it using a $W$-good transformation
by first making the $W$-good change of generators
$(X,R)\to (\Xgal,R)$ followed by the cleanup of $C$.
To make use of this observation, we proceed as follows.

First, we ``optimize'' the set of relators in a suitable way
(using a change of relators). Then we apply the $c$-contraction
to $W$ for suitable $c$ and show that the set $R_{bad}$ of all linear relators in $R$
can be divided into two disjoints subsets $R_{bad}^1$ and $R_{bad}^2$
with the following properties: the set $R_{bad}^1$ is regular for the contracted 
weight function, while the weights of elements of
$R_{bad}^2$ contract at least by the factor of $c^2$ (so they behave
as non-linear relators during the contraction). Then we remove all relators
from $R_{bad}^1$ as described above, and after an additional
contraction we obtain a weighted presentation with desired properties.

We now begin the formal proof.

{\bf Step 1:} \textit{Optimizing the set of relators.}

First note that $R$ contains only finitely many linear relators
since $W(R)<\infty$ while for any linear relator $r\in R$ we have
$W(r)\geq \min\{W(x): x\in X\}$.

Let $R_{bad}=\{r_1,\ldots, r_b \}$ be the set of all linear relators in $R$.
Without loss of generality, we can assume that
$W(r_1)\leq W(r_i)$ for any $i\geq 2$. Let $x_1\in X$
be the generator such that $r_1$ is linear in $x_1$
and $W(x_1)$ is largest among all generators with this property.
After applying a change of relators of the form
$r_i\to r_i r_1^{k_i}$ for $2\leq i\leq b$,
we can assume that all relators besides $r_1$ are NOT linear in $x_1$.
By the choice of $r_1$, this change of relators is
$W$-good, so the total weight of relators cannot increase.
Let $\Rgal_{bad}=\{\rgal_2,\ldots, \rgal_{\bgal} \}$ be the new set of linear relators excluding $r_1$
(note that $\bgal$ may be smaller than $b$ since some of the relators
$r_i r_1^{k_i}$ may not be linear).

Now apply the same procedure to the set $\Rgal_{bad}$:
after reordering, assume that $W(\rgal_2)\leq W(\rgal_i)$ for $i\geq 3$,
choose $x_2\in X$ such that $\rgal_2$ is linear in $x_2$
and $W(x_2)$ is largest possible etc..
Note that $x_2$ is different from $x_1$.

\vskip .12cm
After repeating this procedure at most $b$ times and changing the notations,
we can assume that the set $R_{bad}=\{r_1,\ldots,r_m\}$ of linear relators in $R$
has the following property: there exist distinct elements $x_1,\ldots, x_m\in X$
such that for $1\leq i\leq m$ we have
\begin{itemize}
\item[(i)] $r_i$ is linear in $x_i$ and not linear in $x_j$ for $j<i$;
\item[(ii)] if $r_i$ is linear in $x$ for some other $x\in X$, then $W(x)\leq W(x_i)$.
\end{itemize}
Note that (i),(ii) and Lemma~\ref{weight7}(b) imply that
\begin{itemize}
\item[(iii)] $W((r_i)_L)=W(x_i)$ for $1\leq i\leq m$ (using the notations of Lemmas~\ref{weight7} and \ref{contraction})
\end{itemize}

\vskip .12cm
{\bf Step 2:} \textit{Initial contraction.}

For $1\leq i\leq m$, let $\lam_i=\frac{W(r_i)}{W(x_i)}=\frac{W(r_i)}{W((r_i)_L)}$.
Note that $\lam_i\geq 1$ by Lemma~\ref{weight7}(c).
Define $X_{bad}=\{x_1,\ldots, x_m\}$, $X_{good}=X\setminus X_{bad}$
and $R_{good}=R\setminus R_{bad}$. We have
$$1-W(X_{bad})-W(X_{good})+W(R_{bad})+W(R_{good})<-M.$$
Since $W(r_i)\geq W(x_i)$, we have $W(R_{bad})\geq W(X_{bad})$,
whence $W(X_{good})>M$.

Let $c=W(X_{good})/w$. Note that $c>\frac{M}{w}$.

Define $W_{a}$ to be the $c$-contraction of $W$ on $(F(X),X)$. Note that
$$W_a(X_{good})=\frac{W(X_{good})}{c}=w\quad \mbox{ and }\quad W_a(X)=w+\frac{W(X_{bad})}{c}\geq w.$$
Now divide $R_{bad}$ into two disjoint subsets:
$$R_{bad}^1=\{r_i\in R_{bad} : \lam_i\leq c\}\mbox{ and }R_{bad}^2=R_{bad}\setminus R_{bad}^1,$$
and let $X_{bad}^j$ be the subset of $X_{bad}$ corresponding to $R_{bad}^j$ for $j=1,2$,
that is, $X_{bad}^j=\{x_i\in X_{bad} : r_i\in R_{bad}^j\}$.
\begin{Claim}
\label{Rbad} The following hold:
\begin{itemize}
\item[(1)] If $r\in R_{good}\sqcup R_{bad}^2$, then $W_a(r)\leq W(r)/c^2$.
\item[(2)] If $r_i\in R_{bad}^1$, then $W_a(r_i)=W_a(x_i)$.
\end{itemize}
\end{Claim}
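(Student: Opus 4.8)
The plan is to read both assertions directly off Lemma~\ref{contraction}, which already describes precisely how the $c$-contraction $W_a$ acts on linear versus non-linear elements of $F(X)$. The conceptual point is that the decomposition $R=R_{good}\sqcup R_{bad}^{1}\sqcup R_{bad}^{2}$ is engineered so that the relators in $R_{good}\sqcup R_{bad}^{2}$ all contract ``like non-linear elements'' (their weight drops by a factor of at least $c^{2}$), whereas for $r_{i}\in R_{bad}^{1}$ the linear part $(r_{i})_{L}$ dominates and controls the contracted weight.

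For part (1), I would split into the two cases. If $r\in R_{good}$, recall that $R_{good}=R\setminus R_{bad}$ and $R_{bad}$ is by construction the set of \emph{all} linear relators in $R$; hence $r\in\Phi(F)$, i.e. $r$ is not linear, and Lemma~\ref{contraction}(b) gives $W_{a}(r)\le W(r)/c^{2}$ at once. If $r=r_{i}\in R_{bad}^{2}$, then by the definition of $R_{bad}^{2}$ we have $\lam_{i}>c$, where $\lam_{i}=W(r_{i})/W(x_{i})$. The one thing to verify here is that this $\lam_{i}$ is exactly the quantity $\lam=W(f)/W(f_{L})$ appearing in Lemma~\ref{contraction}: this follows because $f_{L}=(r_{i})_{L}$ and $W((r_{i})_{L})=W(x_{i})$ by property (iii) established in Step~1. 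With that identification, $c\le\lam_{i}$ and Lemma~\ref{contraction}(d) yields $W_{a}(r_{i})\le W(r_{i})/c^{2}$.

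For part (2), if $r_{i}\in R_{bad}^{1}$ then $\lam_{i}\le c$, so Lemma~\ref{contraction}(e) gives $W_{a}(r_{i})=W_{a}((r_{i})_{L})$. Now apply Lemma~\ref{contraction}(c) to get $W_{a}((r_{i})_{L})=W((r_{i})_{L})/c=W(x_{i})/c$, again using $W((r_{i})_{L})=W(x_{i})$ from (iii); and since $W_{a}$ is the $c$-contraction of $W$, we have $W(x_{i})/c=W_{a}(x_{i})$. Chaining these equalities gives $W_{a}(r_{i})=W_{a}(x_{i})$, as required.

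I do not anticipate a genuine obstacle: the proof is essentially an application of the already-proven Lemma~\ref{contraction}. The only care needed is bookkeeping — matching the parameter $\lam_{i}$ of Step~2 with the $\lam$ of Lemma~\ref{contraction} via identity (iii) ($W((r_{i})_{L})=W(x_{i})$), and remembering that membership in $R_{good}$ forces non-linearity, so that the non-linear case (b) of Lemma~\ref{contraction} applies there.
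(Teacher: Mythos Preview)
Your proof is correct and follows exactly the same approach as the paper: part (1) via Lemma~\ref{contraction}(b) for $r\in R_{good}$ and Lemma~\ref{contraction}(d) for $r\in R_{bad}^{2}$, and part (2) via Lemma~\ref{contraction}(c)(e) together with condition (iii). Your write-up is simply a more detailed unpacking of the paper's one-line justification.
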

\begin{proof} (1) holds by Lemma~\ref{contraction}(b)(d), and (2) follows
directly from  condition (iii) above and Lemma~\ref{contraction}(c)(e).
\end{proof}

\vskip .12cm
{\bf Step 3:} \textit{Cleanup and the second contraction.}

Now let $\Xgal= X\setminus X_{bad}^1\cup R_{bad}^1$.
We claim that $\Xgal$ is a free generating set of $F=F(X)$. Indeed,
condition (i) ensures that the matrix $A\in GL_m(\Fp)$ expressing
$\Xgal$ in terms of $X$ mod $\Phi(F)$ is upper-triangular
with nonzero diagonal entries, hence invertible. Thus,
$\Xgal$ generates $F$ modulo $\Phi(F)$ and hence generates $F$.
Finally, $\Xgal$ is a free generating set of $F$ since $|\Xgal|=|X|$.

Since $W_a(r_i)=W_a(x_i)$ for every $r_i\in R_{bad}^1$, the
change of generators $(X,R)\to (\Xgal,R)$ is $W_a$-good by Lemma~\ref{weight2}(b)(i).
We now apply this change of generators followed
by the cleanup of the set $R_{bad}^1$, and let $(X',R')$
be the obtained presentation (note that $X'=X\setminus X_{bad}^1$).

By Lemma~\ref{valuations}(d) we have $W_a(X')=W_a(X)-W_a(X_{bad}^1)$ and
$W_a(R')\leq W_a(R)-W_a(R_{bad}^1)$.
We shall now estimate the quotient $W_a(R')/W_a(X')$.
Using Claim~\ref{Rbad}, we have
\begin{multline*}
\frac{W_a(R')}{W_a(X')}\leq \frac{W_a(R_{good})+W_a(R_{bad}^2)}{W_a(X_{good})+W_a(X_{bad}^2)}\leq
\frac{1}{c}\cdot\frac{W(R_{good})+W(R_{bad}^2)}{W(X_{good})+W(X_{bad}^2)}=\\
\frac{1}{c}\cdot\frac{W(R)-W(R_{bad}^1)}{W(X)-W(X_{bad}^1)}\leq
\frac{1}{c}\cdot\frac{W(R)}{W(X)}<\frac{1}{c},
\end{multline*}
where the last two steps hold since $W(R)<W(X)$ and $W(R_{bad}^1)\geq W(X_{bad}^1)$.
\vskip .1cm
Now let $c_1=W_a(X')/w$.  Note that $c_1\geq 1$ since $W_a(X')\geq W_a(X_{good})=w$.
Let $W'$ be the weight function on $(F(X'),X')$ obtained by the
$c_1$-contraction of $W$. Then we have $W'(X')=W_a(X')/{c_1}=w$ and
$W'(R')\leq W_a(R')/{c_1}$, so
$$W'(R')\leq W'(X')\cdot \frac{W_a(R')}{W_a(X')}<\frac{w}{c}<\frac{w^2}{M}<\delta.$$
Finally, for any $x\in X'$ we have $W'(x)\leq W_a(x)\leq \frac{W(x)}{c}<\frac{1}{c}\leq \frac{w}{M}<\eps$.
\vskip .2cm
(b) By (a) we can find a weighted presentation $(X',R',W')$ of $K$ such that
$W'(X')=2w$, $W'(R')<\delta$ and $\max\{W'(x): x\in X'\}<\frac{\eps}{2}$.

Let $D$ be the unique degree function on $(F(X'),X')$
such that $\frac{W'(x)}{2}< (\frac{1}{2})^{D(x)}\leq W'(x)$ for any $x\in X'$.
Let $W''$ be the $(D,\frac{1}{2})$-weight function on $(F(X'),X')$, so that
$\frac{W'(x)}{2}< W''(x)\leq W'(x)$ for any $x\in X'$.

By construction $w=\frac{W'(X')}{2} <W''(X')$ and $W''(R')\leq W'(R')<\delta$.
Furthermore, if $N=\min\{D(x): x\in X'\}$, then $\sum_{z\in \Im(W'')}W''(z)\leq \sum_{n=N}^{\infty} (\frac{1}{2})^n=(\frac{1}{2})^{N-1}$ and
$$\left(\frac{1}{2}\right)^{N}=\max\{W''(x): x\in X'\}\leq \max\{W'(x): x\in X'\}<\frac{\eps}{2},$$
so $\sum_{z\in \Im(W'')}W''(z)<\eps$.

Finally, let $c'=\frac{W''(X')}{w}$ and $\Wgal$ be the $c'$-contraction of $W''$.
It is clear that the weighted presentation $(X',R', \Wgal)$ satisfies $\Pres(w,\delta,\eps)$.
\end{proof}
\begin{Remark}
In the final step of the proof of Theorem~\ref{deepdescent4}(b)
we can also take $\Wgal$ to be the $(D,\tau)$-weight function on
$(F(X'),X')$, where $\tau\in (0,\frac{1}{2})$ is such that $\Wgal(X')=w$ (such $\tau$ exists by continuity).
Thus, $\Wgal$ can be chosen an integral weight function.
\end{Remark}

\subsection{From pro-$p$ to abstract groups.}
We finish with the analogue of Theorem~\ref{deepdescent}
dealing with abstract groups (see Proposition~\ref{deepdescent2} below).
The proofs of the following two results were suggested by Jaikin-Zapirain.

\begin{Lemma}
\label{jaikin3}
Let $(X,R)$ be a presentation, $G=\Gp(X,R)$ and $\Gamma$ and $\Lambda$ two dense countable
subgroups of $G$. Let $W$ be a weight function on $(F(X),X)$. Then for any $\delta_1>0$
there exists a subset $R_{aux}$ of $F(X)$ such that $W(R_{aux})<\delta_1$ and the images
of $\Gamma$ and $\Lambda$ in $\Gp(X,R\cup R_{aux})$ coincide.
\end{Lemma}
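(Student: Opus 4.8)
The plan is to exploit density together with the countability of $\Gamma$ and $\Lambda$ in order to replace, one element at a time, each member of one group by a member of the other, at the cost of adjoining a relator of prescribed small weight. Throughout, let $\pi:F(X)\to G$ be the natural surjection, and fix any $t\in(0,1)$. For $n\in\dbN$ put $F_n=\{f\in F(X):W(f)<t^n\}$; by Lemma~\ref{weight0}(a) and the observation used in the proof of Lemma~\ref{weight2}(b)(i), each $F_n$ is an open subgroup of $F(X)$, and hence $\pi(F_n)$ is an open subgroup of $G$ (a continuous surjection of profinite groups is open). Since $t^n\to 0$, this yields the approximation statement that I will use twice: \emph{for every $g\in G$ and every $\eps>0$ there is $n$ with $t^n<\eps$ such that $g\,\pi(F_n)$ is an open neighbourhood of $g$ in $G$, and every element of $g\,\pi(F_n)$ has the form $g\,\pi(f)$ with $f\in F(X)$ and $W(f)<\eps$.}

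Now enumerate $\Gamma=\{\gamma_1,\gamma_2,\dots\}$ and $\Lambda=\{\lambda_1,\lambda_2,\dots\}$. Fix $i\in\dbN$. Applying the approximation statement to $g=\gamma_i$ with $\eps=\delta_1/2^{i+2}$ and using that $\Lambda$ is dense in $G$, we may pick $\mu_i\in\Lambda\cap\gamma_i\,\pi(F_n)$, and then an element $f_i\in F(X)$ with $\pi(f_i)=\gamma_i^{-1}\mu_i$ and $W(f_i)<\delta_1/2^{i+2}$. Symmetrically, using density of $\Gamma$, we pick $\nu_i\in\Gamma$ and $f_i'\in F(X)$ with $\pi(f_i')=\lambda_i^{-1}\nu_i$ and $W(f_i')<\delta_1/2^{i+2}$. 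Set
$$R_{aux}=\{f_i:i\geq 1\}\cup\{f_i':i\geq 1\}.$$
Then $R\cup R_{aux}$ is again a countable subset of $F(X)$, and
$W(R_{aux})\leq\sum_{i\geq 1}\bigl(W(f_i)+W(f_i')\bigr)<\sum_{i\geq 1}\delta_1/2^{i+1}=\delta_1/2<\delta_1$,
as required.

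It remains to check that $\Gamma$ and $\Lambda$ have the same image in $\Gp(X,R\cup R_{aux})$. Write $\Gp(X,R\cup R_{aux})=G/N$, where $N$ is the closed normal subgroup of $G$ generated by $\pi(R_{aux})$. For each $i$ we have $\pi(f_i)=\gamma_i^{-1}\mu_i\in N$, so $\gamma_i N=\mu_i N$ lies in the image of $\Lambda$ in $G/N$; since the $\gamma_i$ enumerate $\Gamma$ and the image of $\Lambda$ is a subgroup, the image of $\Gamma$ is contained in the image of $\Lambda$. The reverse inclusion follows in exactly the same way from $\pi(f_i')=\lambda_i^{-1}\nu_i\in N$. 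Hence the two images coincide, which completes the proof.

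The argument is essentially just bookkeeping with the two enumerations; the only point that requires (very mild) care is the approximation statement in the first paragraph — that elements of $G$ lying in a small neighbourhood of a given $g$ can be written as $g$ times the $\pi$-image of an element of $F(X)$ of correspondingly small $W$-weight — and this rests only on $F_n$ being an open subgroup and $\pi$ being an open map.
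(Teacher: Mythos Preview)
Your proof is correct and follows essentially the same approach as the paper's: enumerate the two dense subgroups, use density (together with openness of $\pi$) to approximate each element of one group by an element of the other up to a small-weight relator, and adjoin these relators with geometrically decaying weight bounds. The only cosmetic difference is that the paper lifts $\Gamma$ and $\Lambda$ to subsets of $F(X)$ first and forms the relators there, whereas you work in $G$ and lift the differences $\gamma_i^{-1}\mu_i$; these are equivalent bookkeeping choices.
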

\begin{proof}
Let $\pi:F(X)\to G$ be the natural surjection. Choose countable subsets
$Y=\{y_1,y_2,\ldots\}$ and $Z=\{z_1,z_2,\ldots\}$ of $F(X)$ such that
$\pi(Y)=\Gamma$ and $\pi(Z)=\Lambda$.

Since $\Lambda$ is dense in $G$ and $\pi$ maps open sets to open sets,
we conclude that $\pi^{-1}(\Lambda)$ is dense in $F(X)$. Thus,
for each $i\in\dbN$ we can find $y'_i\in F(X)$ such that
$\pi(y'_i)\in \Lambda$ and $W(y_i^{-1} y'_i)<\frac{\delta_1}{2^{i+1}}$. Similarly,
for each $i\in\dbN$ we can find $z'_i\in F(X)$ such that
$\pi(z'_i)\in \Gamma$ and $W(z_i^{-1} z'_i)<\frac{\delta_1}{2^{i+1}}$.

Let $R_{aux}=\{y_i^{-1} y'_i,\,\, z_i^{-1} z'_i : i\in \dbN\}$;
clearly $W(R_{aux})<\delta_1$. If $\Gamma'$ (resp. $\Lambda'$)
is the image of $\Gamma$ (resp. $\Lambda$) in $\Gp(X,R\cup R_{aux})$,
then by construction $\Gamma'\subseteq \Lambda'$ and $\Lambda'\subseteq \Gamma'$,
so $\Gamma'=\Lambda'$.
\end{proof}

{\bf Notation:} If $(X,R)$ is a presentation,
by $\Gp_{abs}(X,R)$ we will denote the abstract group generated by (the image of) $X$ in $\Gp(X,R)$.

\begin{Remark} If $R$ happens to be a subset of $F_{abs}(X)$ (the free abstract group on $X$),
the abstract group $\la X | R\ra$ need not coincide with $\Gp_{abs}(X,R)$; what is
true is that $\Gp_{abs}(X,R)$ is isomorphic to the image of $\la X | R\ra$
in its pro-$p$ completion.
\end{Remark}

\begin{Proposition}
\label{deepdescent2}
Let $G$ be a GGS pro-$p$ group, and let $w,\delta$ and $\eps$
be positive real numbers. Let $\Gamma$ be a dense finitely generated
subgroup of $G$. Then there exists a weighted presentation $(X,R,W)$ such that
\begin{itemize}
\item[(i)] $(X,R,W)$ satisfies the condition $\Pres(w,\delta,\eps)$.
\item[(ii)] The abstract group $\Gp_{abs}(X,R)$ is a quotient of a finite index subgroup of $\Gamma$.
\end{itemize}
\end{Proposition}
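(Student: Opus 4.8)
The plan is to combine Theorem~\ref{deepdescent} with Lemma~\ref{jaikin3}. Theorem~\ref{deepdescent} supplies a weighted presentation of an open normal subgroup of $G$ satisfying a $\Pres$ condition, and Lemma~\ref{jaikin3} lets us adjoin relators of arbitrarily small total weight in order to equalize the images of two dense countable subgroups in the resulting quotient. The observation that makes the combination painless is that clause (b) of $\Pres(w,\delta,\eps)$ --- the bound $\sum_{v\in\Im(W)}v<\eps$ --- involves only the pair $(F(X),W)$ and is therefore untouched by the addition of new relators; only clause (a), which concerns $W(X)$ and $W(R)$, is affected, and there we simply split the weight budget $\delta$ in two.

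In detail, I would first apply Theorem~\ref{deepdescent} to $G$ with parameters $(w,\delta/2,\eps)$, obtaining an open normal subgroup $H$ of $G$ and a weighted presentation $(X,R_0,W)$ of $H$ with $W(X)=w$, $W(R_0)<\delta/2$, and $\sum_{v\in\Im(W)}v<\eps$. Put $\Delta=\Gamma\cap H$. Since $H$ is open in $G$ and $\Gamma$ is dense in $G$, $\Delta$ is dense in $H$; since $[G:H]<\infty$ and $\Gamma$ is finitely generated, $\Delta$ is a finite-index, hence finitely generated and countable, subgroup of $\Gamma$. Let $\Lambda=\Gp_{abs}(X,R_0)$ be the abstract subgroup of $H=\Gp(X,R_0)$ generated by the image of $X$; it is countable and dense in $H$ (it generates $H$ topologically). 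Applying Lemma~\ref{jaikin3} to the presentation $(X,R_0)$ of $H$, the dense countable subgroups $\Delta$ and $\Lambda$, and $\delta_1=\delta/2$ produces a subset $R_{aux}\subseteq F(X)$ with $W(R_{aux})<\delta/2$ such that $\Delta$ and $\Lambda$ have the same image in $\Gp(X,R_0\cup R_{aux})$.

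Finally, set $R=R_0\cup R_{aux}$, keeping the same $X$ and $W$. Then $W(X)=w$, $W(R)\le W(R_0)+W(R_{aux})<\delta$, and $\Im(W)$ is unchanged since it depends only on $F(X)$ and $W$; hence $(X,R,W)$ satisfies $\Pres(w,\delta,\eps)$, which is (i). For (ii), let $q\colon\Gp(X,R_0)\to\Gp(X,R)$ be the canonical surjection; then the abstract group generated by the image of $X$ in $\Gp(X,R)$ equals $q(\Lambda)$, that is, $\Gp_{abs}(X,R)=q(\Lambda)$, and by the choice of $R_{aux}$ we have $q(\Lambda)=q(\Delta)$, which is a homomorphic image of $\Delta=\Gamma\cap H$, a finite-index subgroup of $\Gamma$. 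I do not expect a genuine obstacle here: both ingredients are already available, and the only steps requiring a little care are checking that $\Gamma\cap H$ is dense and finitely generated in $H$, identifying $\Gp_{abs}(X,R)$ with $q(\Lambda)$, and the bookkeeping in splitting $\delta$ between $R_0$ and $R_{aux}$.
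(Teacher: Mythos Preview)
Your proposal is correct and follows essentially the same route as the paper's proof: apply Theorem~\ref{deepdescent} to get a weighted presentation of an open subgroup, then use Lemma~\ref{jaikin3} to adjoin relators of small total weight so that the images of $\Gamma\cap H$ and $\Gp_{abs}(X,R_0)$ coincide. The only cosmetic difference is that you split the $\delta$-budget explicitly as $\delta/2+\delta/2$, whereas the paper applies Theorem~\ref{deepdescent} with parameter $\delta$ and then takes $\delta_1$ small enough.
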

\begin{proof} By Theorem~\ref{deepdescent} we can find an open subgroup $\Hgal$ of $G$,
a presentation $(X,\Rgal)$ of $\Hgal$ and a weight function $W$ on $(F(X),X)$
such that $(X,\Rgal,W)$ satisfies $\Pres(w,\delta,\eps)$. Note that
$\Hgal\cap \Gamma$ and $\Gp_{abs}(X,\Rgal)$ are both dense countable subgroups
of $\Gp(X,\Rgal)$.

By Lemma~\ref{jaikin3}, given $\delta_1>0$ we can find $R_{aux}\subset F(X)$
with $W(R_{aux})<\delta_1$ such that if $R=\Rgal\cup R_{aux}$, then
the images of $\Hgal\cap \Gamma$ and $\Gp_{abs}(X,\Rgal)$ in $\Gp(X,R)$ coincide.
By definition the image of $\Gp_{abs}(X,\Rgal)$ in $\Gp(X,R)$
is $\Gp_{abs}(X,R)$, so (ii) holds.
Since $W(R)<W(\Rgal)+\delta_1$, the weighted presentation $(X,R,W)$ satisfies $\Pres(w,\delta,\eps)$
for sufficiently small $\delta_1$, so (i) also holds.
\end{proof}

\begin{Remark} The reader might wonder how the class
\vskip .1cm
\centerline{$\mathfrak{GGS}=\{$GGS abstract groups$\}$ is related to the class}
\vskip .1cm
\vskip .1cm
\centerline{$\mathfrak{GGS}'=\{$dense finitely generated subgroups of GGS pro-$p$ groups$\}$}
\vskip .1cm
\noindent
to which Proposition~\ref{deepdescent2} applies. Clearly, any group in $\mathfrak{GGS}'$ is residually-$p$,
which is not necessarily true for groups in $\mathfrak{GGS}$; however, for any $\Gamma\in\mathfrak{GGS}$
the image of $\Gamma$ in its pro-$p$ completion belongs to $\mathfrak{GGS}'$. Thus,
to prove Theorem~\ref{thm:main0} it will suffice to prove the corresponding result
for groups in $\mathfrak{GGS}'$, and in fact this is what we will do (see Theorem~\ref{jaikin5}).
On the other hand, we do not know whether any group in $\mathfrak{GGS}'$ also belongs to $\mathfrak{GGS}$.
\end{Remark}

\section{Constructing Kazhdan quotients}
In this section we prove the main result of this paper (Theorem~\ref{thm:main0}).
The first three subsections will be devoted to the proof of Theorem~\ref{thm:main0}
``up to finite index'':

\begin{Theorem}
\label{thm:main1}
Let $\Gamma$ be a Golod-Shafarevich group.
Then $\Gamma$ has a subgroup of finite index which surjects onto an infinite
group with property $(T)$.
\end{Theorem}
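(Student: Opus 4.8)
The plan is to follow the three-step algorithm sketched in the introduction, using the machinery of Sections~2 and~3 together with the two deep inputs attributed to Jaikin-Zapirain (Theorem~\ref{finiteindex}) and to the Kac-Moody-Steinberg construction of~\cite{EJ} (Theorem~\ref{jaikin4}). Since $\Gamma$ is Golod-Shafarevich, its pro-$p$ completion $\Gamma_{\phat}$ is a GGS pro-$p$ group (indeed a GS one), so we may work there. First I would invoke Theorem~\ref{deepdescent3} to produce, for a suitably large constant $M$ (the value $6\cdot 10^4$ is what Step~(1) calls for), an open normal subgroup $K\trianglelefteq\Gamma_{\phat}$ together with a weighted presentation $(X,R,W)$ of $K$ with $1-W(X)+W(R)<-M$. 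Then I would apply Theorem~\ref{deepdescent4} (with parameters $w=12$, $\delta=\tfrac{1}{1000}$, and $\eps$ chosen small enough to satisfy the technical condition alluded to) to convert this into a new weighted presentation $(X',R',W')$ of $K$ satisfying $\Pres(w,\delta,\eps)$, in particular $W'(X')=12$ and $W'(R')<\tfrac{1}{1000}$; by the Remark after Theorem~\ref{deepdescent4} I may take $W'$ to be an integral $(D',t'_0)$-weight function, so this reads as a genuine GGS-type inequality for a degree function $D'$.

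Next I would pass from the pro-$p$ world back to abstract groups. Setting $\Delta=K\cap\iota(\Gamma)$ where $\iota:\Gamma\to\Gamma_{\phat}$, this $\Delta$ is a dense finitely generated subgroup of $K$ and is (a quotient of, in fact equal to) a finite index subgroup of $\Gamma$. Using Proposition~\ref{deepdescent2} I may further assume that the abstract group $\Gp_{abs}(X',R')$ attached to our presentation is a quotient of a finite index subgroup of $\Gamma$, while still satisfying $\Pres(w,\delta,\eps)$. Now I would invoke the existence of a Kac-Moody-Steinberg group $\Lambda$ with property~$(T)$ from~\cite{EJ} that admits a presentation on the \emph{same} generating set $X'$ with very small relator weight (this is Theorem~\ref{jaikin4}). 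The key construction is then the ``naive'' amalgamation: form $\Omega=\Gp(X',R'\cup R''_{\Lambda})$, the common quotient of $K$ (hence of $\Delta$) and of $\Lambda$; by subadditivity of Hilbert series, $1-H_{D',X'}(t'_0)+H_{D',R'\cup R''_{\Lambda}}(t'_0)\le 1-H_{D',X'}(t'_0)+H_{D',R'}(t'_0)+H_{D',R''_{\Lambda}}(t'_0)<0$, so $\Omega$ is a GGS pro-$p$ group and in particular infinite. Being a quotient of $\Lambda$, the abstract group $\Omega$ has property~$(T)$, and being a quotient of $\Delta$ (a finite index subgroup of $\Gamma$), it is the desired infinite $(T)$-quotient of a finite-index subgroup --- this gives Theorem~\ref{thm:main1} directly, without needing Step~(3) / Theorem~\ref{finiteindex}, which is reserved for the passage to Theorem~\ref{thm:main0}.

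The main obstacle is the case $p<67$: here no GGS-with-respect-to-$p$ group with property~$(T)$ is available, so the Kac-Moody-Steinberg group $\Lambda$ is not GGS over $\Fp$ and the naive weight-counting above fails --- the amalgam $\Omega$ need not be GGS over $\Fp$. The workaround I would follow is the algebra-side argument indicated in the introduction: choose a finite extension field $\dbF$ of $\Fp$ over which enough Steinberg relations become redundant, build an $\dbF$-algebra $A$ that \emph{is} GGS over $\dbF$ (hence infinite-dimensional by Theorem~\ref{GGSineq} / the Remark on GGS algebras), and exhibit a surjection from $\Fp[[\Omega_{\phat}]]$ onto a finite-codimension subalgebra of $A$; infinite-dimensionality of $A$ then forces $\Omega$ infinite. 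Carrying this out requires careful bookkeeping of which relators survive the change of ground field and a dimension count matching the finitary GGS inequality of Theorem~\ref{descseq}; the technical condition extracted in Theorem~\ref{deepdescent4}(a) (the bound $\max\{W'(x):x\in X'\}<\eps$) is exactly what makes the relator-counting in $A$ go through. I expect this ground-field-change step, rather than the combinatorics of transformations, to be where the real work lies.
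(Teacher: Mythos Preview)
Your proposal is correct and follows the same route as the paper, whose proof of Theorem~\ref{thm:main1} is the single sentence that it follows from Theorem~\ref{jaikin4} and Proposition~\ref{deepdescent2}. You have simply unpacked those two results inline: your invocations of Theorems~\ref{deepdescent3} and~\ref{deepdescent4} are already subsumed in Proposition~\ref{deepdescent2}, and your second and third paragraphs recreate the proof of Theorem~\ref{jaikin4} (including the $p<67$ algebra trick over $\dbF=\dbF_{p^8}$); one small slip is that $\Delta=K\cap\iota(\Gamma)$ is only a \emph{quotient} of the finite-index subgroup $\iota^{-1}(K)\leq\Gamma$, not equal to one, unless $\Gamma$ is residually-$p$.
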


In the last subsection we will deduce Theorem~\ref{thm:main0} from Theorem~\ref{thm:main1}
using a very general argument.

\subsection{Golod-Shafarevich groups with property $(T)$}
In \cite{EJ}, a large class of groups, called \textit{Kac-Moody-Steinberg groups,}
was introduced, and many of these groups were shown to have property $(T)$.
We shall not recall the general construction, but concentrate on its special
case that will be used in this paper.

Let $\dbF$ be a finite field, let
$n_1,\ldots, n_k$ be a collection of positive integers, and
for each $1\leq i\leq k$ let $M_i=\dbF^{n_i}$ (the standard ${n_i}$-dimensional $\dbF$-vector space).
For $1\leq i\leq k$ let $\calU_{i}$ be the group whose elements are formal symbols
$\{x_{i}(a): a\in M_i\}$ subject to relations $$x_{i}(a)\cdot x_{i}(b)=
x_{i}(a+b),$$ so that $\calU_{i}\cong (M_i,+)$.
Define $\KMS(\dbF;n_1,\ldots, n_k)$ to be the group generated by $\{\calU_1,\ldots, \calU_k\}$
subject to the following relations:
\begin{align*}
&\mbox{ (R1) }[x_{i}(a),x_j(b),x_i(c)]=1 \mbox{ for } 1\leq i\neq j\leq k,\,\, a,c \in M_i\mbox{ and } b \in M_j.&\\
&\mbox{ (R2) }[x_{i}(\lam a),x_j(b)]=[x_{i}(a),x_j(\lam b)]\mbox{ for } 1\leq i\neq j\leq k,\,\,a\in M_i,\,\,b\in M_j\mbox{ and }
\lam\in \dbF.&
\end{align*}
In other words, $\KMS(\dbF;n_1,\ldots, n_k)$ is the largest quotient of the free product
$\calU_1*\ldots*\calU_k$
in which each pair $\{\calU_i,\calU_j\}$ generates a nilpotent subgroup of class two and the commutator map
$\calU_i\times \calU_j\to [\calU_i,\calU_j]$ is $\dbF$-linear. Note that relations (R2)
follow from (R1) if $|\dbF|$ is prime.

The group $\KMS(\dbF;n_1,\ldots, n_k)$ will be called a \textit{Kac-Moody-Steinberg group}
and its subgroups $\calU_1,\ldots, \calU_k$ will be called \textit{the root subgroups}
(for the explanation of this terminology see \cite[Section 7]{EJ}).

The following result is a special case of \cite[Corollary~7.2]{EJ}:
\begin{Theorem}
\label{GS_supply}
The group $\KMS(\dbF;n_1,\ldots, n_k)$ has property $(T)$ whenever $|\dbF|>(k-1)^2.$
\end{Theorem}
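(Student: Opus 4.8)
The plan is to deduce Theorem~\ref{GS_supply} from the spectral criterion for property $(T)$ of groups generated by a finite family of subgroups developed in \cite{EJ}. Write $\Lambda=\KMS(\dbF;n_1,\ldots,n_k)$, let $V$ be a unitary representation of $\Lambda$ with $V^{\Lambda}=0$, and for $1\le i\le k$ let $V_i=V^{\calU_i}$ with orthogonal projection $P_i$. The criterion reduces the claim to showing that the \emph{codistance} of the family $(V_1,\ldots,V_k)$ is bounded away from $1$ by a constant depending only on $k$ and $|\dbF|$; thus the whole task is an estimate on this codistance, which in turn I would build from a local analysis of the pairs of root subgroups together with the graph-combination lemma of \cite{EJ}.

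The local input comes from the relations (R1)--(R2): for $i\ne j$ the subgroup $\la\calU_i,\calU_j\ra$ is a finite two-step nilpotent group over $\dbF$ with central commutator subgroup $[\calU_i,\calU_j]$ and $\dbF$-bilinear, non-degenerate commutator pairing $M_i\times M_j\to[\calU_i,\calU_j]$. Decomposing $V$ as a representation of $\la\calU_i,\calU_j\ra$, on any isotypic component on which $[\calU_i,\calU_j]$ acts by a non-trivial character the representation is Heisenberg-type, and a Gauss-sum computation shows that $V_i$ and $V_j$ meet such a component almost orthogonally, with $\|P_iP_j\|\le|\dbF|^{-1/2}$ there; hence $\mathrm{codist}(V_i,V_j)\le\tfrac12\bigl(1+|\dbF|^{-1/2}\bigr)$ on this part. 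On the complementary part $\calU_i$ and $\calU_j$ commute and one descends to the abelianization of $\la\calU_i,\calU_j\ra$, where $V_i$ and $V_j$ are fixed spaces of commuting characters and are transverse unless both characters are trivial.

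To globalize, I would feed these estimates into the combination lemma of \cite{EJ} with the complete graph $K_k$ as the interaction graph: all pairs of root subgroups interact (the pairing is always non-degenerate), and the non-trivial eigenvalues of the normalized adjacency operator of $K_k$ are all equal to $-1/(k-1)$, which is the origin of the factor $k-1$. On the part of $V$ where some pair $\la\calU_i,\calU_j\ra$ acts trivially the pairwise codistance is not controlled, but there $V_i=V_j$ while, since $V^{\Lambda}=0$, at least one further root subgroup acts non-trivially, and the non-degeneracy of its pairings lets the combination lemma absorb this piece into the remaining configuration. Putting together the pairwise bound $\tfrac12(1+|\dbF|^{-1/2})$ and the spectral gap $1+\tfrac1{k-1}$ of $K_k$, the codistance of the full family is strictly less than $1$ exactly when $|\dbF|^{-1/2}<\tfrac1{k-1}$, i.e. $|\dbF|>(k-1)^2$, giving the theorem.

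The step I expect to be the main obstacle is precisely this globalization: organizing the decomposition of $V$ so that the degenerate pieces (where some pair of root subgroups acts trivially) are handled uniformly, and bookkeeping the constants coming from the Gauss-sum bound $|\dbF|^{-1/2}$ against the $K_k$-eigenvalue $-1/(k-1)$ so that they combine to the sharp threshold $(k-1)^2$ rather than a weaker bound. By contrast, the finiteness of the pairs $\la\calU_i,\calU_j\ra$, the Heisenberg/Gauss-sum estimate, and the invocation of the \cite{EJ} criterion are comparatively standard once (R1) and (R2) are in hand.
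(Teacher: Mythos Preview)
The paper does not prove this theorem: it is simply quoted as a special case of \cite[Corollary~7.2]{EJ}, with no argument given beyond the citation. Your proposal is, in effect, a sketch of how that result is established in \cite{EJ}, and the broad strokes are correct: the spectral criterion for a group graded over a complete graph $K_k$, the pairwise angle bound $|\dbF|^{-1/2}$ coming from the Heisenberg-type representations of $\la\calU_i,\calU_j\ra$, and the eigenvalue $-1/(k-1)$ of the normalized adjacency of $K_k$ are exactly the ingredients that combine to give the threshold $|\dbF|>(k-1)^2$. The explicit Kazhdan-constant formula quoted later in the paper (equation~\eqref{bsetup} in Appendix~A, specialized from \cite[Corollary~7.2]{EJ}) confirms that this is precisely the mechanism.

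One point where your sketch is more tentative than it needs to be: the ``degenerate pieces'' you flag as the main obstacle are not, in the \cite{EJ} framework, a separate case requiring ad hoc bookkeeping. The relevant quantity there is the cosine of the angle between $V_i$ and $V_j$ measured inside $(V_i\cap V_j)^{\perp}$ (equivalently, one works modulo the $\la\calU_i,\calU_j\ra$-invariants from the outset). On the part where $[\calU_i,\calU_j]$ acts trivially the representation factors through the abelian group $\calU_i\times\calU_j$, and there, as you note, $V_i$ and $V_j$ are genuinely orthogonal once the common fixed space is removed. So the $|\dbF|^{-1/2}$ bound holds uniformly, not just on the Heisenberg part, and the globalization via the $K_k$-spectral gap goes through without a further decomposition of $V$. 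Your instinct that the constants must be tracked carefully to land on the sharp threshold $(k-1)^2$ is right, but the organization is cleaner than your outline suggests.
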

It is easy to see that ``most'' of the groups $\KMS(\dbF;n_1,\ldots, n_k)$ are Golod-Shafarevich
if $k\geq 9$ and $|\dbF|$ is prime. Thus, for $p\geq 67$, Theorem~\ref{GS_supply} produces
a large supply of Golod-Shafarevich groups with property $(T)$ which provides a starting point
for the proof of Theorem~\ref{thm:main1}. However, if $|\dbF|$ is not prime,
the group $\KMS(\dbF;n_1,\ldots, n_k)$ will never be Golod-Shafarevich because of relations (R2).
In this case, instead of working with the group $\Gamma=\KMS(\dbF;n_1,\ldots, n_k)$ we will
consider the completed group algebra $\Fp[[\Gamma_{\phat}]]$ and show that $\Fp[[\Gamma_{\phat}]]$
maps onto a finite codimension subalgebra of an $\dbF$-algebra satisfying the GGS condition.

In the next two subsections we shall prove the following theorem.

\begin{Theorem}
\label{jaikin4}
There exist positive real numbers $w,\delta$ and $\eps$
with the following property: if $(X,R,W)$ is a triple satisfying the condition
$\Pres(w,\delta,\eps)$, then the group $\Gp_{abs}(X,R)$ has
an infinite quotient with property $(T)$.
In fact, one can take $w=\frac{3}{2}$, $\delta=\frac{1}{50}$ and $\eps=\frac{1}{100}$
when $p\geq 67$, and $w=12$, $\delta=\frac{1}{100}$ and $\eps=\frac{1}{1000}$
for arbitrary $p$.
\end{Theorem}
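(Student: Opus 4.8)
The plan is to exhibit an infinite quotient of $\Gp_{abs}(X,R)$ which it shares with a Kac--Moody--Steinberg group of property $(T)$. Fix a finite field $\dbF$ of characteristic $p$ and an integer $k\geq 2$ with $|\dbF|>(k-1)^2$, so that every group $\KMS(\dbF;n_1,\ldots,n_k)$ has property $(T)$ by Theorem~\ref{GS_supply}. Given $(X,R,W)$ satisfying $\Pres(w,\delta,\eps)$, I would partition $X=X_1\sqcup\cdots\sqcup X_k$, subdividing each $X_i$ further into blocks of size $[\dbF:\Fp]$ (``$\dbF$-lines''), so that the pieces and the lines are as balanced as possible and the generators inside a single line have equal weight; the smallness of $\sum_{v\in\Im(W)}v$ required by $\Pres$ is exactly what allows such a reorganization at negligible cost. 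Declaring $X_i$ to be an $\Fp$-basis of $\calU_i=\dbF^{n_i}$ (so $n_i=|X_i|/[\dbF:\Fp]$), let $R_{KMS}\subset F(X)$ be the finite set of relators that (i) turn each $\la X_i\ra$ into $(\dbF^{n_i},+)$, namely $p$-th powers of generators and commutators of pairs of generators inside one $X_i$, and (ii) impose (R1) on all triples of basis generators and (R2) on all pairs of basis generators together with a fixed $\Fp$-basis of $\dbF$. Then $\Gp_{abs}(X,R_{KMS})$ is a quotient of $\KMS(\dbF;n_1,\ldots,n_k)$, hence has property $(T)$, while $\Omega_{abs}:=\Gp_{abs}(X,R\cup R_{KMS})$ is a common quotient of $\Gp_{abs}(X,R)$ and of $\Gp_{abs}(X,R_{KMS})$; in particular $\Omega_{abs}$ has property $(T)$. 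Since $\Omega_{abs}$ is dense in the pro-$p$ group $\Omega:=\Gp(X,R\cup R_{KMS})$, it remains only to prove that $\Omega$ is infinite.

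For $p\geq 67$ I would take $\dbF=\Fp$ and $k=9$; then $[\dbF:\Fp]=1$, the (R2)-relators disappear, and $R_{KMS}$ consists only of $p$-th-power, commutator and (R1)-relators. Using $W(f^p)=W(f)^p$ and $W([f,g])\leq W(f)W(g)$ (Lemma~\ref{weight0} and its consequences) one bounds $W(R_{KMS})$ by essentially $\frac{w^2}{2k}+\frac{(k-1)w^3}{k^2}$ (the partition can be chosen so that the imbalance and the $p$-th powers contribute only negligible lower-order terms). For $k=9$, $w=\frac32$, $\delta=\frac1{50}$, $\eps=\frac1{100}$ this is strictly below $W(X)-1-W(R)=\frac12-W(R)$, so $1-W(X)+W(R\cup R_{KMS})<0$ and $\Omega$ is a Golod--Shafarevich, hence infinite, pro-$p$ group by Lemma~\ref{easy} and Corollary~\ref{cor:GS}. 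This is precisely the ``naive'' argument from the introduction, now available because $\Pres(\frac32,\frac1{50},\frac1{100})$ supplies more than $150$ generators of tiny weight, which can be split into nine nearly equal root subgroups.

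For arbitrary $p$ --- in particular for $p<67$, where no $(T)$-group satisfying the GGS condition is known --- the group $\Omega$ need not be Golod--Shafarevich over $\Fp$: when $[\dbF:\Fp]>1$ the (R2)-relators alone contribute weight of order $w^2$. One then argues at the level of algebras. Choose $\dbF$ a proper extension of $\Fp$ with $|\dbF|>(k-1)^2$ (one may keep $k=9$). The completed group algebra $\Fp[[\Omega]]=\Fp\lla U\rra/I$ is mapped onto a subalgebra of finite codimension inside a suitable $\dbF$-algebra $A$: informally, $A$ is presented over $\dbF$ by the $\dbF$-lines as generators, subject to the $\dbF$-linear analogues of the $p$-th-power, commutator and (R1)-relators and to the images of $R$, the point being that the (R2)-relators --- which over $\Fp$ were forcing $\dbF$-bilinearity of the commutator maps --- become redundant once the ground field is $\dbF$. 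One verifies, via the generalized Golod--Shafarevich inequality (Theorem~\ref{GGSineq}) applied over $\dbF$ and using $W(R)<\delta$, the smallness of $\sum_{v\in\Im(W)}v$, and the chosen balance, that $A$ is a GGS $\dbF$-algebra; hence $A$ is infinite-dimensional over $\dbF$, so its finite-codimension subalgebra is infinite-dimensional over $\Fp$, so $\Fp[[\Omega]]$ is infinite-dimensional and $\Omega$ is infinite. The larger constants $w=12$, $\delta=\frac1{100}$, $\eps=\frac1{1000}$ absorb the loss in passing from $\Fp$ to $\dbF$: the algebra $A$ has roughly $|X|/[\dbF:\Fp]$ generators, so $|X|$ and hence $w$ must be correspondingly larger, and the degree function has to be rescaled so that each $\dbF$-line carries a single integer degree.

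The main obstacle is this last case: identifying the correct $\dbF$-algebra $A$, constructing the homomorphism from $\Fp[[\Omega]]$ to $A$ and bounding the codimension of its image, and --- above all --- checking that $A$ satisfies the GGS condition over $\dbF$. This requires tracking carefully how the degree function and the Hilbert series transform under the change of ground field and under the reorganization of the generators into $\dbF$-lines, and it is exactly in this bookkeeping that condition (b) of $\Pres$ and the precise values of $w,\delta,\eps$ are used. By contrast, the case $p\geq 67$ reduces, once the partition is fixed, to the single weight estimate above.
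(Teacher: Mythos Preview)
Your proposal is correct and follows essentially the same route as the paper: for $p\geq 67$ take $\dbF=\Fp$, $k=9$, partition $X$ into nine nearly balanced pieces, and verify the GGS inequality for $(X,R\cup R_{KM})$ by the weight estimate you wrote down (which matches the paper's $\frac{w^2}{18}+\frac{8w^3}{81}$ exactly); for arbitrary $p$ take $\dbF=\dbF_{p^8}$, $k=9$, arrange $X$ into $\dbF$-lines of equal weight using condition~(b) of $\Pres$, and map $\Fp[[\Omega]]$ onto a finite-codimension subalgebra of a GGS $\dbF$-algebra $A$ on which the (R2)-relators become redundant.

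The one concrete ingredient you do not name in the general case is how to force the images of the (R2)-relators into the defining ideal of $A$. The paper handles this by enlarging that ideal with the set $S_3=\{a^2b,\,aba,\,ba^2 : a,b\in\Ugal\}$ of cubic monomials; modulo $S_3$ one has $\phi([x_{i,k}(\lam),x_{j,l}(\mu)])\equiv 1+\lam\mu(ab-ba)$, which makes the (R2)-identities transparent, and the extra weight $\wgal(S_3)\leq 3\eps\,\wgal(\Ugal)^2$ is absorbed by the choice of $\eps$. Your sketch correctly flags this verification as the main remaining work, and the paper's solution is exactly this small trick.
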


It is clear that Theorem~\ref{thm:main1} follows from Theorem~\ref{jaikin4} and
Proposition~\ref{deepdescent2}.
\vskip .2cm

\begin{Remark} Suppose $K$ is a pro-$p$ group with a weighted presentation
$(X',R',W')$ such that $1-W'(X')+W'(R')<-6\cdot 10^4$. Then by Theorem~\ref{deepdescent4}(b)
$K$ also has a presentation $(X,R,W)$ satisfying $\Pres(w,\delta,\eps)$ where $w,\delta$ and $\eps$
are as in Theorem~\ref{jaikin4}. This observation justifies Step~2
of the algorithm presented in the introduction.
\end{Remark}

\subsection{Proof of Theorem~\ref{jaikin4} for $\mathbf{p\geq 67}$}
Fix arbitrary positive real numbers $w$, $\delta$ and $\eps$,
and let $(X,R,W)$ be a triple satisfying $\Pres(w,\delta,\eps)$.
In fact, in our case $p\geq 67$ we will not use the full power
of the assumption (b) in $\Pres(w,\delta,\eps)$ -- it will be enough
to assume that $\max\{W(x) : x\in X\}<\eps$.

Divide $X$ into $9$ (disjoint) subsets $X^1,\ldots, X^9$ such that $|W(X^i)-W(X^j)|<\eps$ for each $i,j$.
For each $1\leq i\leq 9$ we will denote the elements of $X^i$
by $x_{i,1},\ldots,x_{i,{n_i}}$.
We can now identify $X$ with a generating set for the group
$\KMS(\dbF_p;n_1,\ldots, n_9)$, which is given by the abstract presentation
\begin{multline*}
\la X\mid R_{KM}\ra=\la x_{i,k}, 1\leq i\leq 9, 1\leq k\leq n_i\mid x_{i,k}^p,\,\,
[x_{i,k},x_{i,l}], \quad [x_{i,k},x_{j,l},x_{i,m}]
\ra.
\end{multline*}
Since $p>(9-1)^2$, the group $\KMS(\dbF_p;n_1,\ldots, n_9)=\la X | R_{KM}\ra$ 
has property $(T)$ by Theorem~\ref{GS_supply}. 
Hence the group $\Gp_{abs}(X,R_{KM})$  also has $(T)$ being a quotient of 
$\la X | R_{KM}\ra$.\footnote{Recall that by definition $\Gp_{abs}(X,R_{KM})$
is the image of $\la X | R_{KM}\ra$ in its pro-$p$ completion.
We do not know whether the group $\la X | R_{KM}\ra$ is always residually-$p$
(which is equivalent to the injectivity of the map $\la X | R_{KM}\ra\to \Gp_{abs}(X,R_{KM})$).}
\vskip .1cm
Now consider the presentation $(X, R\cup R_{KM})$.
For $1\leq i\leq 9$ let $w_i=W(X^i)$, so that $\sum_{i=1}^9 w_i=W(X)=w$.
Note that
\begin{multline*}
W(\{[x_{i,k},x_{i,l}] : 1\leq i\leq 9,\, 1\leq k<l\leq n_i\})=\\
\sum_{i=1}^9 \sum_{1\leq k<l\leq n_i} W(x_{i,k}) W(x_{i,l})<
\frac{1}{2}\sum_{i=1}^9 \Big(\sum_{k=1}^{n_i} W(x_{i,k})\Big)^2=
\frac{1}{2} \sum_{i=1}^9 w_i^2.
\end{multline*}
Similarly, $$W(\{[x_{i,k},x_{j,l},x_{i,m}] :
1\leq i\neq j\leq 9,\, 1\leq k,m\leq n_i,\, 1\leq l\leq n_j\})\leq \sum_{1\leq i\neq j\leq 9} w_i^2 w_j.$$
Therefore,
$$W(R_{KM})< \sum_{i=1}^9 \frac{w_i^2}{2} + \sum_{1\leq i\neq j\leq 9}(w_i^2 w_j) + \eps^{p-1}w.$$
(since $\max\{W(x) : x\in X<\eps\}$).
Recall that we have defined subsets $X^1,\ldots, X^9$ so that $|w_i-w_j|<\eps$
for each $i,j$, whence $|w_i-\frac{w}{9}|<\eps$ for each $i$.
An easy computation shows that $$\sum_{i=1}^9 w_i^2\leq \frac{w^2}{9}+9\eps^2 \mbox{ and }\sum_{1\leq i\neq j\leq 9}(w_i^2 w_j)\leq \frac{8}{81}w^3+9w\eps^2.$$
Therefore
$$1-W(X)+W(R\cup R_{KM})\leq 1-w+\frac{w^2}{18}+\frac{8w^3}{81}+9(w+\frac{1}{2})\eps^2+w\eps^{p-1}+\delta.$$
Setting $w=\frac{3}{2}$, $\delta=\frac{1}{50}$ and $\eps=\frac{1}{100}$, we get
$1-W(X)+W(R\cup R_{KM})\leq 1-\frac{3}{2}+\frac{1}{8}+\frac{1}{3}+\frac{36}{10^4} +\frac{3}{200}+\frac{1}{50}<0$.
Thus, we proved the following claim:
\begin{Claim}
\label{R''}
Assume that $w=\frac{3}{2}$, $\delta=\frac{1}{50}$ and $\eps=\frac{1}{100}$.
Then
\begin{equation}
1-W(X)+W(R\cup R_{KM})<0.
\end{equation}
\end{Claim}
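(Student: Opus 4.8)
The plan is to turn the estimates sketched just above the claim into precise inequalities and then verify a purely numerical bound. First I would record what the hypothesis provides: since $(X,R,W)$ satisfies $\Pres(w,\delta,\eps)$ we have $W(X)=w$, $W(R)<\delta$, and $W(x)<\eps$ for every generator $x\in X$ (the last point because each $W(x)$ occurs among the nonnegative summands in part (b) of $\Pres$). Since $W(R\cup R_{KM})\le W(R)+W(R_{KM})<\delta+W(R_{KM})$, it suffices to bound $W(R_{KM})$ and then check that $1-w+\delta+W(R_{KM})<0$.

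Next I would bound $W(R_{KM})$ by splitting $R_{KM}$ into its three families of relators and using that a weight function is multiplicative on products and satisfies $W([f,g])\le W(f)W(g)$ (Lemma~\ref{weight0}(c), applied twice for the triple commutators). Writing $w_i=W(X^i)$, the relators $x_{i,k}^p$ contribute $\sum_{i,k}W(x_{i,k})^p\le\eps^{p-1}\sum_{i,k}W(x_{i,k})=\eps^{p-1}w$; the relators $[x_{i,k},x_{i,l}]$ with $k<l$ contribute at most $\tfrac12\sum_i\big(\sum_k W(x_{i,k})\big)^2=\tfrac12\sum_i w_i^2$; and the relators $[x_{i,k},x_{j,l},x_{i,m}]$ with $i\ne j$ contribute at most $\sum_{i\ne j}w_i^2 w_j$. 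Using that the subsets $X^1,\dots,X^9$ were chosen so that $|w_i-w_j|<\eps$ --- hence $|w_i-\tfrac w9|<\eps$, since $\sum_i w_i=w$ --- I would write $w_i=\tfrac w9+\eta_i$ with $\sum_i\eta_i=0$ and $|\eta_i|<\eps$, and expand to obtain $\sum_i w_i^2\le\tfrac{w^2}{9}+9\eps^2$ together with $\sum_{i\ne j}w_i^2 w_j\le\tfrac{8}{81}w^3+9w\eps^2$. Assembling these bounds gives
\[
1-W(X)+W(R\cup R_{KM})\ \le\ 1-w+\frac{w^2}{18}+\frac{8w^3}{81}+9\Big(w+\tfrac12\Big)\eps^2+w\eps^{p-1}+\delta .
\]

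Finally I would substitute $w=\tfrac32$, $\delta=\tfrac1{50}$, $\eps=\tfrac1{100}$, using $p\ge 67$ so that $w\eps^{p-1}$ is negligible; the right-hand side is then at most $1-\tfrac32+\tfrac18+\tfrac13+\tfrac{36}{10^4}+\tfrac3{200}+\tfrac1{50}$, which is strictly negative, proving the claim.

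The conceptual content here is slight; the real work, and the place to be careful, is the pair of balancing estimates --- in particular the cubic bound $\sum_{i\ne j}w_i^2 w_j\le\tfrac{8}{81}w^3+9w\eps^2$, whose main term carries the coefficient $9\cdot 8\cdot(\tfrac19)^3=\tfrac{8}{81}$ and whose $\eps$-corrections have to be collected without dropping any --- followed by the arithmetic check that the resulting cubic in $w$, evaluated at $w=\tfrac32$, stays below $1$ after one subtracts $w$. It is worth stressing that $p\ge 67$ is not needed for any of these estimates (any prime gives $\eps^{p-1}\le\eps$); it enters only to guarantee, via Theorem~\ref{GS_supply}, that the Kac-Moody-Steinberg group $\KMS(\Fp;n_1,\dots,n_9)$ through which this argument runs genuinely has property $(T)$.
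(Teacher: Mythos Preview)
Your proof is correct and follows essentially the same route as the paper: you split $R_{KM}$ into its three relator families, bound each via $W([f,g])\le W(f)W(g)$ and $W(x)<\eps$, then use the balancing condition $|w_i-w/9|<\eps$ to control $\sum_i w_i^2$ and $\sum_{i\ne j}w_i^2 w_j$, arriving at the same displayed inequality and numerical check as the paper. Your closing remark that $p\ge 67$ plays no role in the estimate itself (only in securing property~$(T)$ for the Kac--Moody--Steinberg group) is a correct and worthwhile clarification.
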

We are ready to finish the proof of Theorem~\ref{jaikin4} (so far for $p\geq 67$).
By Claim~\ref{R''} the presentation $(X,R\cup R_{KM})$ satisfies the GGS condition,
hence the group $\Gp_{abs}(X,R\cup R_{KM})$ is infinite. By definition,
$\Gp_{abs}(X,R\cup R_{KM})$ is a quotient of $\Gp_{abs}(X,R)$. On the other hand,
$\Gp_{abs}(X,R\cup R_{KM})$ is also a quotient of $\Gp_{abs}(X,R_{KM})$ and hence has property $(T)$.

\subsection{Proof of Theorem~\ref{jaikin4} in the general case.}
When $p$ is small, we do not have any examples of groups of the form
$\KMS(\dbF_p; n_1,\ldots, n_k)$ which are both Kazhdan and Golod-Shafarevich.
Instead, we will consider the groups
$\KMS(\dbF_q; n_1,\ldots, n_{9})$ with $q=p^{8}$, which have property $(T)$
by Theorem~\ref{GS_supply} since $p^{8}>8^2$.
Let $\calB=\{\lam_1=1,\ldots, \lam_{8}\}$ be a basis for $\dbF_q$ over $\dbF_p$.
For any $n_1,\ldots, n_{9}\in\dbN$ the group $\KMS(\dbF_q; n_1,\ldots, n_k)$
is given by the abstract presentation $\la X_{KM} |R_{KM}\ra$ where
\begin{align}
&\label{presq}
X_{KM}=\{x_{i,k}(\lam_j), 1\leq i\leq 9, 1\leq k\leq n_i, 1\leq j\leq 8\}\mbox{ and }&\\
&R_{KM}=R_{nilp}\cup R_{field} \mbox{ where }&\\
&R_{nilp}=\{x_{i,k}(\lam)^p, [x_{i,k}(\lam),x_{i,l}(\mu)],
\quad [x_{i,k}(\lam),x_{j,l}(\mu),x_{i,m}(\nu)]\},&\\
&\label{presq4}
R_{field}=\{[x_{i,k}(\lam),x_{j,l}(\mu)][x_{i,k}(\lam\mu),x_{j,l}(1)]^{-1}\},&
\end{align} and
$\lam,\mu,\nu$ are arbitrary elements of the basis $\calB$.

Here $x_{i,k}(\alpha)$
for $\alpha\in \dbF_q\setminus \calB$ is defined in the obvious way: write
$\alpha=\sum_{j=1}^8 c_j \lam_j$ with $0\leq c_j\leq p-1$ and set
$x_{i,k}(\alpha)=\prod_{j=1}^8 x_{i,k}(\lam_j)^{c_j}$.

\begin{proof}[of Theorem~\ref{jaikin4}]
Let $w=12$, fix $\delta>0$ and $\eps>0$ (to be specified at the very end),
and let $(X,R,W)$ be a triple satisfying $\Pres(12,\delta/2,\eps')$
with $\eps'=\min\{\eps,\delta/14\}$.

In order to find a common infinite quotient for $\Gp_{abs}(X,R)$
and the above KMS groups we need
to impose an additional condition on the triple $(X,R,W)$:

\begin{itemize}
\item[(CC)] For every $z\in\dbR$, the cardinality of the set $\{x\in X: W(x)=z\}$ is divisible by $8$.
\end{itemize}

This can be achieved by adding artificial generators $y_1,\ldots, y_m$
and relations $y_1=\ldots=y_m=1$ (that is, by replacing the presentation
$(X,R)$ with $(X\sqcup Y, R\sqcup Y)$ for some finite set $Y$),
and extending $W$ to a weight function on $(F(X\cup Y), X\cup Y)$
by assigning appropriate weights to elements of $Y$.
Clearly, for any $z\in \Im(W)$ we need to add at most $7$ generators $y$ with $W(y)=z$
to achieve (CC).
Thus, we can assume that $12\leq W(X\cup Y)\leq 12+7s$ and $W(R\cup Y)\leq W(R)+7s$ where
$s=\sum_{z\in \Im(W)}z$. By assumption $W(R)<\delta/2$ and $s<\eps'\leq \delta/14$, whence
$W(R)+7s< \delta$. Now contract the weight function $W$ on $(F(X\cup Y),X\cup Y)$ by the factor
$\frac{W(X\cup Y)}{12}\geq 1$. After relabeling $X\cup Y$ by $X$, $R\cup Y$ by $R$
and the new (contracted) weight function by $W$ we get that $(X,R,W)$
satisfies $\Pres(12,\delta,\eps)$ and condition (CC) holds.

Since $\max\{W(x): x\in X\}<\eps$, we can divide the set $X$ into $9$ subsets $X^1,\ldots, X^{9}$, such that
\begin{itemize}
\item[(a)] $|W(X^i)-W(X^j)|<8\eps$ for all $1\leq i,j\leq 9$
\item[(b)] For any $1\leq i\leq 9$ and $z\in\dbR$, the cardinality of the set $\{x\in X^i: W(x)=z\}$ is divisible by $8$.
\end{itemize}
For each $1\leq i\leq 9$ let $n_i=|X^i|/8$ (this is surely an integer by (b)),
and denote elements of $X^i$ by symbols $\{x_{i,k}(\lam_j)\}$ with $1\leq k\leq n_i$ and $1\leq \lam_j\leq 8$,
such that for each $i,k$ the weight $W(x_{i,k}(\lam_j))$ is the same for all $j$.
Now we can identify $X$ with the generating set $X_{KM}$ from the presentation
for $\KMS(\Fq; n_1,\ldots, n_{9})$ given by \eqref{presq}-\eqref{presq4},
and consider the presentation
$(X,R_{KM}\cup R)$. If we show that the pro-$p$ group $\Gp(X,R_{KM}\cup R)$ is infinite,
we will be done by the same argument as in the case $p\geq 67$.
Note that unlike the latter case the group $G=\Gp(X,R_{KM}\cup R)$ is not GGS
because of the relations $R_{field}$.
To prove that it is infinite, we will map the completed group algebra
$\Fp[[G]]$ onto a finite codimension subalgebra of certain $\Fq$-algebra, which, in turn,
will be GGS (as $\Fq$-algebra) and hence infinite.
\vskip .12cm
Let $U=\{u_{i,k}(\lam_j) : 1\leq i\leq 9, 1\leq k\leq n_i, 1\leq j\leq 8\},$
and consider the standard embedding of $F(X)$ into $\Fp\lla U\rra$ given by $x_{i,k}(\lam_j)\mapsto 1+u_{i,k}(\lam_j)$.
Let $S=\{r-1: r\in R_{KM}\cup R\}$ and $I$ the ideal of $\Fp\lla U\rra$ generated by $S$.
Recall from Section~2 that the algebra $\Fp[[G]]$ is isomorphic to $\Fp\lla U\rra/I$.

Now let $\Ugal$ be the set of formal symbols $\{u_{i,k} : 1\leq i\leq 9, 1\leq k\leq n_i\}$,
and consider the unique continuous $\Fp$-algebra homomorphism
$\phi: \Fp\lla U\rra\to \Fq\lla \Ugal\rra$ given by $$\phi(u_{i,k}(\lam_j))=\lam_j u_{i,k}.$$
Clearly, $\phi(\Fp\lla U\rra)$
contains the ideal of $\Fq\lla \Ugal\rra$ generated by $\Ugal$, so $\Im\phi$ is of finite codimension.
Let $S_1=\{\phi(r-1): r\in R\}$,  $S_2=\{\phi(r-1): r\in R_{nilp}\},$
and  $S_3=\{a^2b, aba, ba^2 : a,b\in \Ugal\}$. Let $\Igal$ be the ideal of $\Fq\lla \Ugal\rra$
generated by $S_1\cup S_2\cup S_3$.
We will prove that
\begin{itemize}
\item[(a)] $\phi(S)\subseteq \Igal$, whence there is a homomorphism
$\Fp\lla U\rra/I\to\Fq\lla \Ugal\rra/\Igal$ with image of finite codimension;
\item[(b)] The algebra $\Fq\lla \Ugal\rra/\Igal$ satisfies the GGS condition and hence it is infinite.
\end{itemize}
By earlier discussion, (a) and (b) would imply Theorem~\ref{thm:main1}.

For (a), we only need to show that $\phi(r-1)\in I$ for any $r\in R_{field}$
(for $r\in R_{nilp}\cup R$ we have $\phi(r-1)\in I$ by construction).
Any $r\in R_{field}$ is of the form $r=[x_{i,k}(\lam),x_{j,l}(\mu)][x_{i,k}(\lam\mu),x_{j,l}(1)]^{-1}$.
Let $a=\phi(u_{i,k})$ and $b=\phi(u_{j,l})$.
Then
\begin{multline*}
\phi([x_{i,k}(\lam),x_{j,l}(\mu)])=[1+\lam a,1+\mu b]=(1+\lam a)^{-1}(1+\mu b)^{-1}(1+\lam a)(1+\mu b)=\\
1+(1+\lam a)^{-1}(1+\mu b)^{-1}\lam\mu(ab-ba)
\equiv 1+\lam\mu(ab-ba) \mod \Igal.
\end{multline*}
Similarly, we have
$\phi([x_{i,k}(\lam\mu),x_{j,l}(1)])\equiv 1+\lam\mu(ab-ba) \mod \Igal$, whence
$$\phi(r-1)=\phi([x_{i,k}(\lam),x_{j,l}(\mu)]-[x_{i,k}(\lam\mu),x_{j,l}(1)])\phi([x_{i,k}(\lam\mu),x_{j,l}(1)])^{-1}\in \Igal.$$
Thus, we proved (a).

Now let $w$ be the weight function on $\Fp\lla U\rra$ which is $X$-compatible with the weight function $W$ on $(F(X),X)$.
Let $\wgal$ be the unique weight function on $\Fq\lla \Ugal\rra$ such that $\wgal(\phi(u))=w(u)$ for any $u\in U$ -- note that such that $\wgal$ exists since $w(u_{i,k}(\lam_j))=W(x_{i,k}(\lam_j))$ is independent of $j$.

By an obvious analogue of Lemma~\ref{easy} for algebras, to prove (b) it suffices to show that
$$1-\wgal(\Ugal)+\wgal(S_1)+\wgal(S_2)+\wgal(S_3)<0.$$
Arguing as in the proof of Lemma~\ref{weight2}(a), we conclude that 
$\wgal(\phi(f))\leq w(f)$ for any $f\in\Fp\lla U\rra$.
In particular, $\wgal(S_1)\leq w(\{r-1: r\in R\})= W(R)<\delta$.

For $1\leq i\leq 9$ let $U^i=\{u_{i,k}: 1\leq k\leq n_i\}$ and
$\wgal_i=\wgal(U^i)$, and set $\wgal=\sum \wgal_i$. By construction, $\wgal=\wgal(\Ugal)=w(U)/8=3/2$.
Also note that $|\wgal_i-\wgal_j|=\frac{|W(X^i)-W(X^j)|}{8}<\frac{8\eps}{8}=\eps$ for any $i,j$,
and thus $|\wgal_i-\frac{\wgal}{9}|<\eps$.

Since $\max\{w(u): u\in U\}=\max\{W(X): x\in X\}<\eps$, we have
$\wgal(S_3)\leq 3\eps \wgal^2$, and $\wgal(S_2)$
can be estimated as in the case $p\geq 67$.
We get
\begin{equation*}
\label{messy}
1-\wgal(\Ugal)+\wgal(S_1)+\wgal(S_2)+\wgal(S_3)\leq 1-\wgal+
\sum_{i=1}^{9}\wgal_i^2/2+\sum_{i\neq j} \wgal_i^2 \wgal_j+
\eps^{p-1} \wgal+3\eps \wgal^2+\delta.
\end{equation*}
Using the same estimates as in the case $p\geq 67$, we get
\begin{multline*}
1-\wgal(\Ugal)+\wgal(S_1)+\wgal(S_2)+\wgal(S_3)\leq \\
1-\wgal+\wgal^2/18+8\wgal^3/81 + 9(\wgal+\frac{1}{2})\eps^2 +
(\wgal+3\wgal^2)\eps+\delta<-\frac{1}{24}+36\eps^2+9\eps+\delta.
\end{multline*}
Setting $\delta=\frac{1}{50}$ and $\eps=\frac{1}{1000}$, we get
$1-\wgal(\Ugal)+\wgal(S_1)+\wgal(S_2)+\wgal(S_3)<0$,
which finishes the proof.
\end{proof}
\subsection{Proof of Theorem~\ref{thm:main0}}
In this subsection we establish the following result due to Jaikin-Zapirain:
\begin{Proposition}
\label{finiteindex}
Let $\Gamma$ be a finitely generated group and $H$ a
finite index subgroup $\Gamma$. If $H$ maps onto an
infinite group with property $(T)$, then $\Gamma$ also maps onto an infinite
group with property $(T)$.
\end{Proposition}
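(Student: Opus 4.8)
The plan is to reduce Proposition~\ref{finiteindex} to a group-theoretic fact about subdirect products of just infinite Kazhdan groups. Two preliminary reductions come first. Replacing $H$ by its normal core $H_0=\bigcap_{g\in\Gamma}gHg^{-1}$, which still has finite index in $\Gamma$: if $\phi\colon H\to Q$ is a surjection onto an infinite group with property $(T)$, then $\phi(H_0)$ has finite index in $Q$ and so is again infinite and Kazhdan (property $(T)$ passes to finite index subgroups), so $H_0$ also maps onto an infinite Kazhdan group; thus we may assume $H\trianglelefteq\Gamma$. Since $\Gamma$ is finitely generated and $[\Gamma:H]<\infty$, the groups $H$ and $Q$ are finitely generated, and the union of an increasing chain of normal subgroups of infinite index in a finitely generated group again has infinite index; hence Zorn's lemma produces a normal subgroup of $Q$ maximal for the property of having an infinite quotient, and replacing $Q$ by that quotient we may assume $Q$ is \emph{just infinite} (every nontrivial normal subgroup has finite index). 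So henceforth $H\trianglelefteq\Gamma$ has finite index, $N:=\ker\phi\trianglelefteq H$, and $Q=H/N$ is infinite, Kazhdan and just infinite.

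Now let $M:=\bigcap_{g\in\Gamma}gNg^{-1}$ be the normal core of $N$ in $\Gamma$, so $M\trianglelefteq\Gamma$ and $M\le N\le H$. Since $N\trianglelefteq H$ and $\Gamma$ normalizes $H$, each conjugate $N_i=g_iNg_i^{-1}$ is again normal in $H$, and there are only $r\le[\Gamma:H]$ distinct ones, say $N=N_1,\dots,N_r$, with $M=N_1\cap\dots\cap N_r$. The diagonal map $H/M\to\prod_{i=1}^r H/N_i$ is injective with image surjecting onto every factor, and conjugation by $g_i$ identifies $H/N_i$ with $H/N=Q$; thus $\overline H:=H/M$ is a finitely generated subdirect product of $r$ copies of the just infinite Kazhdan group $Q$. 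Granting the lemma below, $\overline H$ has property $(T)$; it is infinite because it surjects onto $Q$, and it has finite index in $\overline\Gamma:=\Gamma/M$, so $\overline\Gamma$ has property $(T)$ as well (finite index overgroups of Kazhdan groups are Kazhdan) and is infinite. Since $\Gamma$ surjects onto $\overline\Gamma$, this finishes the proof.

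It remains to prove the \textbf{Lemma:} \emph{a finitely generated group $P$ that is a subdirect product of just infinite Kazhdan groups $Q_1,\dots,Q_r$ has property $(T)$.} I would argue by induction on $r$, the case $r=1$ being immediate since then $P=Q_1$. For the inductive step set $P_1:=P\cap(Q_1\times1\times\dots\times1)$; it is normal in $Q_1$, hence — as $Q_1$ is just infinite — either trivial or of finite index in $Q_1$. If $P_1=1$, projecting away the first coordinate embeds $P$ as a finitely generated subdirect product of $Q_2,\dots,Q_r$, and the inductive hypothesis applies. If $[Q_1:P_1]<\infty$, then $P/P_1$ embeds into $(Q_1/P_1)\times Q_2\times\dots\times Q_r$ with $Q_1/P_1$ finite; projecting away this finite factor exhibits $P/P_1$ as an extension of a finitely generated subdirect product of $Q_2,\dots,Q_r$ by a finite group, so $P/P_1$ is Kazhdan by the inductive hypothesis together with the stability of property $(T)$ under group extensions, while $P_1$ is Kazhdan as a finite index subgroup of the Kazhdan group $Q_1$; hence $P$, being an extension of a Kazhdan group by a Kazhdan group, is Kazhdan. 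The main obstacle — and the one genuinely new point — is this lemma: the passage to a just infinite $Q$ is exactly what makes the subdirect structure rigid enough (forcing each $P\cap Q_i$ to be trivial or of finite index) for the induction to run. A naive induction on $[\Gamma:H]$ would not work here, since the Schreier-type presentations used to ``go up'' from subgroups need not preserve property $(T)$.
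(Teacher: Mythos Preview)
Your proof is correct and follows essentially the same approach as the paper: reduce to $H$ normal, pass to a just-infinite Kazhdan quotient $Q=H/N$, take the normal core $M=\bigcap_g N^g$, and show that $H/M$, as a subdirect product of copies of $Q$, has property $(T)$. The only cosmetic difference is in this last step: the paper chooses a minimal subset $S$ of the transversal so that $H/M\hookrightarrow\prod_{s\in S}H/N^s$ and argues directly that each $H_s=\pi(H/M)\cap(H/N^s)$ must be of finite index (triviality would contradict minimality), whence $H/M$ has finite index in the product, while you package the same dichotomy into an induction on the number of factors---both arguments exploit just-infiniteness in exactly the same way.
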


\begin{proof}
If $H$ maps onto an infinite group with property $(T)$, then so does
any finite index subgroup of $H$. Thus, we may assume that $H$
is normal in $\Gamma$. Let $N$ be a normal subgroup of $H$ such
that $H/N$ has property $(T)$. By \cite{Gr}, any finitely generated
abstract group has a just-infinite quotient. Thus,
replacing $N$ by a larger normal subgroup (if necessary),
we may assume that $H/N$ is just-infinite.

Let $S_0$ be a transversal of $H$ in $\Gamma$, and set $L=\cap_{s\in S_0} N^s$.
Then $L$ is a normal subgroup of $\Gamma$. We shall show that $\Gamma/L$ has
property $(T)$. Since $H/L$ is of finite index in $\Gamma/L$, it is
enough to show that $H/L$ has property $(T)$.

Note that $H/L$ naturally embeds in $\prod_{s\in S_0} H/N^s$.
Choose a subset $S\subseteq S_0$ such that the composite map
$$\pi:H/L\to\prod_{s\in S_0} H/N^s\to\prod_{s\in S} H/N^s$$
is injective and $S$ is minimal with this property.
For each $s\in S$ let $H_s=\pi(H/L)\cap (H/N^s)$. Since
$\pi(H/L)$ surjects onto each factor in $\prod_{s\in S} H/N^s$,
the group $H_s$ is normal in $H/N^s$. Since $H/N^s\cong H/N$
is just-infinite, $H_s$ is either trivial or of finite index in $H/N^s$.
But if $H_a$ is trivial for some $a\in S$, then $H/L$ injects in
$\prod_{s\in S\setminus\{a\}} H/N^s$, contrary to the minimality of $S$.
Thus, $H_s$ is of finite index in $H/N^s$ for each $s\in S$.
Therefore, $\pi(H/L)\supseteq \prod H_s$ is of finite index in $\prod_{s\in S_0} H/N^s$.

Since $H/N$ has property $(T)$, so do $\prod_{s\in S_0} H/N^s$ and its finite index subgroups.
Therefore, $H/L\cong \pi(H/L)$ also has property $(T)$.
\end{proof}

We completed the proof of Theorem~\ref{thm:main0} as it clearly
follows from Theorem~\ref{thm:main1} and Proposition~\ref{finiteindex}.
Recall that Theorem~\ref{thm:main1} was, in turn, a consequence of Proposition~\ref{deepdescent2}
and Theorem~\ref{jaikin4}. Thus, we actually established the following generalization of 
Theorem~\ref{thm:main0}:

\begin{Theorem}
\label{jaikin5}
Let $G$ be a generalized Golod-Shafarevich pro-$p$ group and
$\Gamma$ a dense countable subgroup of $G$. Then $\Gamma$
has an infinite quotient with property $(T)$.
\end{Theorem}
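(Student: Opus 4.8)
The plan is to recombine the main results already in hand: Proposition~\ref{deepdescent2} and Theorem~\ref{jaikin4} will produce a finite index subgroup of $\Gamma$ with an infinite quotient with property~$(T)$, and then the argument of Proposition~\ref{finiteindex} will push this up to $\Gamma$ itself. The only new issue compared with the main theorem is that $\Gamma$ is merely countable, not finitely generated, so some care is needed about where the existence of just-infinite quotients is invoked.

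First I would fix the positive reals $w,\delta,\eps$ furnished by Theorem~\ref{jaikin4} (e.g.\ $w=12$, $\delta=\tfrac1{100}$, $\eps=\tfrac1{1000}$, valid for every~$p$). The point to record is that Proposition~\ref{deepdescent2}, although stated for a finitely generated dense subgroup, has a proof that uses only countability: the subgroup $\Hgal\cap\Gamma$ appearing there is dense and countable in $\Gp(X,\Rgal)$, Lemma~\ref{jaikin3} applies to countable subgroups as stated, and $\Hgal\cap\Gamma$ automatically has finite index in $\Gamma$ since $[\Gamma:\Hgal\cap\Gamma]\le[G:\Hgal]<\infty$. Hence Proposition~\ref{deepdescent2} yields a weighted presentation $(X,R,W)$ satisfying $\Pres(w,\delta,\eps)$ and a finite index subgroup $\Delta\le\Gamma$ surjecting onto $\Gp_{abs}(X,R)$. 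By Theorem~\ref{jaikin4}, $\Gp_{abs}(X,R)$ — hence $\Delta$ — surjects onto an infinite group $Q$ with property~$(T)$; crucially $Q$ is finitely generated, being a quotient of $\Gp_{abs}(X,R)$, so by \cite{Gr} we may replace $Q$ by a just-infinite quotient, still with~$(T)$. Thus $\Delta$ surjects onto a just-infinite group with property~$(T)$.

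It remains to deduce that $\Gamma$ has an infinite quotient with $(T)$, and here I would run the proof of Proposition~\ref{finiteindex} with $\Gamma$ countable in place of finitely generated: finite generation of $\Gamma$ was used there only to pass to a just-infinite quotient, which we have already secured at the level of the finitely generated group $Q$. Concretely, replacing $\Delta$ by its normal core in $\Gamma$ (still of finite index, and still surjecting onto an infinite $(T)$ group, since a finite index subgroup of a just-infinite $(T)$ group is finitely generated, infinite, has $(T)$, and hence has a just-infinite $(T)$ quotient) we may assume $\Delta$ is normal in $\Gamma$ and $\Delta/N$ is just-infinite with $(T)$ for some normal subgroup $N$ of $\Delta$. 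Setting $L=\bigcap_{s}N^{s}$ over a finite transversal of $\Delta$ in $\Gamma$ gives a normal subgroup $L$ of $\Gamma$ with $L\le N$, and each $\Delta/N^{s}\cong\Delta/N$ is just-infinite with $(T)$. Embedding $\Delta/L$ into $\prod_{s}\Delta/N^{s}$ and choosing a minimal set of coordinates keeping the embedding injective, every coordinate projection of $\Delta/L$ is onto, so just-infiniteness together with minimality forces the image of $\Delta/L$ to have finite index in the product; since a finite product of property~$(T)$ groups has $(T)$ and $(T)$ passes to finite index subgroups, $\Delta/L$ has $(T)$, whence so does $\Gamma/L$ (which contains $\Delta/L$ with finite index). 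Finally $\Gamma/L$ is infinite because $\Delta/L$ surjects onto $\Delta/N$. The main obstacle, such as it is, is purely this bookkeeping: one must check that every use of the existence of just-infinite quotients is applied to a finitely generated group (a quotient of some $\Gp_{abs}(X,R)$, or a finite index subgroup of one) and never to $\Gamma$ or a finite index subgroup of $\Gamma$, which need not be finitely generated.
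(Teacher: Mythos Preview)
Your proposal is correct and follows exactly the approach the paper intends: the paper does not give a separate proof of Theorem~\ref{jaikin5} but simply observes that the chain Proposition~\ref{deepdescent2} $\Rightarrow$ Theorem~\ref{jaikin4} $\Rightarrow$ Proposition~\ref{finiteindex} already establishes it. You have correctly identified the one point the paper glosses over --- that Propositions~\ref{deepdescent2} and~\ref{finiteindex} are stated for finitely generated $\Gamma$ --- and your resolution is the right one: the proof of Proposition~\ref{deepdescent2} only uses countability (via Lemma~\ref{jaikin3}), and in Proposition~\ref{finiteindex} the passage to a just-infinite quotient can be carried out on the finitely generated group $\Gp_{abs}(X,R)$ (or a finite index subgroup of its image) rather than on $\Gamma$ itself.
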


It is also natural to ask for a generalization (or rather strengthening)
of Theorem~\ref{thm:main0} of a different kind:

\begin{Question}
\label{q:JZ}
Let $\Gamma$ be a GGS group with respect to some prime $p$.
Does $\Gamma$ always have an infinite Kazhdan quotient which is also a GGS group?
\end{Question}

Note that for $p\geq 67$ the existence of such quotient for some
finite index subgroup of $\Gamma$ follows directly from the proof of Theorem~\ref{thm:main1}.
Unfortunately, the GGS condition is likely not satisfied by the
Kazhdan quotient for the entire $\Gamma$ constructed in the proof of Theorem~\ref{finiteindex}.

\appendix
\section{\\Uniform non-amenability of Golod-Shafarevich groups}
\centerline{by \sc Mikhail Ershov \rm and \sc Andrei Jaikin-Zapirain
\footnote{The author is supported by the Spanish Ministry of Science and Innovation, grant
 MTM2008-06680.}}
 \vskip .2cm

\markboth{\small\sc mikhail ershov and andrei jaikin-zapirain}{\small\sc uniform non-amenability of golod-shafarevich groups}

The main goal of this section is to prove that generalized Golod-Shafarevich groups
satisfy a strong form of non-amenability, called uniform non-amenability.
\vskip .1cm

{\bf Notation.} In the previous sections by
the standard abuse of notation we identified the generating set
$X$ of a (pro-$p$) presentation $(X,R)$ with its (canonical) image in
the pro-$p$ group $\Gp (X,R)$ or abstract group $\Gp_{abs}(X,R)$. In this section
such abuse of notation could lead to confusion. For this
reason, if $(X,R)$ is a presentation and $\Gamma=\Gp_{abs}(X,R)$,
we will denote the image of $X$ in $\Gamma$ by $X_{\Gamma}$.

\subsection{Basic definitions}

\begin{Definition}\rm Let $\Gamma$ be a finitely generated (abstract) group.
Given a finite generating set $S$ of $\Gamma$,
the \textit{F\o lner constant} \footnote{We thank Goulnara Arzhantseva for correcting our terminology here} 
$h(\Gamma,S)$ is defined by
$$h(\Gamma,S)=\inf_{A\subset Cay(\Gamma,S)}\frac{|\partial A|}{|A|}$$
where infimum is taken over all finite subsets $A$ of the Cayley
graph of $\Gamma$ with respect to $S$ and
$$\partial A=\{g\in A: gs\not\in A \mbox{ for some }s\in S\cup S^{-1}\}.$$
The group $\Gamma$ is called \textit{amenable} if
$h(\Gamma,S)=0$ for some (hence any) finite generating set $S$ of $\Gamma$.
\end{Definition}

\begin{Definition}\rm Let $\Gamma$ be a finitely generated group.
\begin{itemize}
\item[(a)] Given a unitary representation $V$ of $\Gamma$
and a generating set $S$ of $\Gamma$, we define
$\kappa(\Gamma,S;V)$ to be the largest $\eps\geq 0$
such that for any $v\in V$ there exists $s\in S$ with $\|sv-v\|\geq \eps\|v\|$.

\item[(b)] Given a generating set $S$ of $\Gamma$, the \textit{Kazhdan constant}
$\kappa(\Gamma,S)$ is defined to be the infimum of the set $\{\kappa(\Gamma,S;V)\}$
where $V$ runs over all unitary representations of $\Gamma$ without nonzero invariant vectors.

\item[(c)] The group $\Gamma$ is called a \textit{Kazhdan group}
(equivalently $\Gamma$ is said to have \textit{Kazhdan's property $(T)$})
if $\kappa(\Gamma,S)>0$ for some (hence any) finite generating set $S$ of $\Gamma$
\end{itemize}
\end{Definition}

Recall that to deduce Corollary~\ref{thm:nonam} from Theorem~\ref{thm:main0}
we used the fact that a group which is both amenable and
Kazhdan must be finite. This well-known fact follows, for instance, from the following
characterization of amenability:

\begin{Theorem}[\cite{Hu}]
\label{thm:hu}
Let $\Gamma$ be a group generated by a finite set $S$. Then
$\Gamma$ is amenable if and only if $\kappa(\Gamma,S;L^2({\Gamma}))=0$.
\end{Theorem}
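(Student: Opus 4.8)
The plan is to recognize this as the classical equivalence, essentially Hulanicki's theorem, between amenability and the existence of almost invariant vectors in the regular representation. Unwinding the definition in this section, $\kappa(\Gamma,S;L^2(\Gamma))=0$ says exactly that for every $\eps>0$ there is a nonzero $v\in L^2(\Gamma)$ with $\|sv-v\|<\eps\|v\|$ for all $s\in S$, i.e.\ the left regular representation $(sv)(x)=v(s^{-1}x)$ admits almost invariant vectors. I would prove the two implications separately. The direction ``amenable $\Rightarrow\kappa=0$'' is a short computation with normalized indicator functions of F\o lner sets; the substantive direction is the converse, where I expect the main obstacle to be passing from an almost invariant vector, which gives only a ``soft'' F\o lner-type condition in $L^1$, to an honest finite set with small boundary.

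For the direction ``amenable $\Rightarrow\kappa(\Gamma,S;L^2(\Gamma))=0$'': if $\Gamma$ is amenable then $h(\Gamma,S)=0$, so there are finite nonempty $A_n\subseteq\Gamma$ with $|\partial A_n|/|A_n|\to 0$. I would set $v_n=|A_n|^{-1/2}\chi_{A_n^{-1}}$, where the inversion is only there to make the left translates interact correctly with the right-handed boundary in the definition of $\partial$. Then $\|v_n\|=1$ and $\|sv_n-v_n\|^2=|sA_n^{-1}\triangle A_n^{-1}|/|A_n|=|A_ns^{-1}\triangle A_n|/|A_n|$, and since $A_n\setminus A_ns^{-1}=\{g\in A_n:gs\notin A_n\}\subseteq\partial A_n$ together with $|A_ns^{-1}\triangle A_n|=2|A_n\setminus A_ns^{-1}|$, this is at most $2|\partial A_n|/|A_n|\to 0$. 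Hence no $\eps>0$ witnesses the defining property of $\kappa$, so $\kappa(\Gamma,S;L^2(\Gamma))=0$.

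For the converse, suppose $v_n\in L^2(\Gamma)$ with $\|v_n\|=1$ and $\|sv_n-v_n\|<1/n$ for all $s\in S$; replacing $v_n$ by $|v_n|$ does not increase any $\|sv_n-v_n\|$, so I may assume $v_n\ge 0$. Put $f_n=v_n^2\in L^1(\Gamma)$, so $f_n\ge 0$, $\|f_n\|_1=1$, $sf_n=(sv_n)^2$ pointwise, and Cauchy--Schwarz gives $\|sf_n-f_n\|_1\le\|sv_n-v_n\|\,\|sv_n+v_n\|\le 2\|sv_n-v_n\|<2/n$. I would then run the layer-cake (Namioka) argument: for $t>0$ let $A_n^t=\{x:f_n(x)>t\}$, a finite set, so that $f_n=\int_0^\infty\chi_{A_n^t}\,dt$ and hence $\int_0^\infty|A_n^t|\,dt=1$. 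The key point is that for each fixed $x$ the integrand $\chi_{sA_n^t}(x)-\chi_{A_n^t}(x)=\mathbf{1}[t<f_n(s^{-1}x)]-\mathbf{1}[t<f_n(x)]$ has constant sign in $t$, so the absolute value may be pulled inside the integral, giving $\|sf_n-f_n\|_1=\int_0^\infty|sA_n^t\triangle A_n^t|\,dt$. Summing over $s\in S$ (say $|S|=k$) yields $\int_0^\infty\big(\sum_{s\in S}|sA_n^t\triangle A_n^t|\big)dt<2k/n$, whereas $\int_0^\infty|A_n^t|\,dt=1$; a Markov/averaging argument then produces a level $t_n>0$ with $B_n:=A_n^{t_n}$ finite nonempty and $\sum_{s\in S}|sB_n\triangle B_n|\le(4k/n)|B_n|$. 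Finally I would transfer this to the boundary occurring in $h$: with $C_n=B_n^{-1}$ one has $|sB_n\triangle B_n|=|C_n\triangle C_ns|$ for $s\in S$, so $|C_n\triangle C_n\sigma|/|C_n|\to 0$ for every $\sigma\in S\cup S^{-1}$, and since $\partial C_n\subseteq\bigcup_{\sigma\in S\cup S^{-1}}(C_n\setminus C_n\sigma^{-1})$ this forces $|\partial C_n|/|C_n|\to 0$, whence $h(\Gamma,S)=0$ and $\Gamma$ is amenable.

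The routine ingredients are the Cauchy--Schwarz bound from $L^2$ to $L^1$ and the left/right translate bookkeeping. The one genuinely substantial step, and where I expect the real work to lie, is the slicing into level sets combined with the constant-sign observation: this is exactly what converts the analytic almost-invariance of $f_n$ into the existence of a single finite set of arbitrarily small relative boundary, i.e.\ it is the bridge between Reiter's condition and the F\o lner condition.
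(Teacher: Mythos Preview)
Your proof is correct and follows the standard route (F\o lner sets give almost invariant indicator functions in $L^2$; conversely, almost invariant $L^2$-vectors are pushed to $L^1$ via squaring and Cauchy--Schwarz, then sliced via the Namioka layer-cake argument to recover F\o lner sets). The bookkeeping with left translation versus the right-handed boundary in the paper's definition of $\partial A$ is handled cleanly.

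However, there is nothing to compare against: the paper does not supply its own proof of this statement. Theorem~\ref{thm:hu} is quoted from Hulanicki~\cite{Hu} purely as background for the appendix on uniform non-amenability, and the paper simply cites the reference without argument. So your write-up goes well beyond what the paper does here; it is a self-contained proof of a result the authors are content to take from the literature.
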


The constant $\kappa(\Gamma,S;L^2({\Gamma}))$, which appears frequently
in subsequent discussion, will be denoted by $\alpha(\Gamma,S)$
and called \textit{Kazhdan $L^2$-constant}. Note that if
$\Gamma$ is infinite, $L^2({\Gamma})$ has no nonzero invariant vectors
whence
\begin{equation*}
\alpha(\Gamma,S)\geq \kappa(\Gamma,S).
\end{equation*}
We finish this subsection with several inequalities involving F\o lner and Kazhdan constants.

\begin{Lemma}
\label{Arzh}
Let $\Gamma$ be a group generated by a finite set $S$. If $\pi:\Gamma\to \Gamma'$
is a surjective homomorphism and $S'=\pi(S)$, then
\begin{itemize}
\item[(a)] $h(\Gamma,S)\geq h(\Gamma',S')$
\item[(b)] $\alpha(\Gamma,S)\geq \alpha(\Gamma',S')$ and $\kappa(\Gamma,S)\leq \kappa(\Gamma',S')$
\end{itemize}
\end{Lemma}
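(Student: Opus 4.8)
The plan is to view all three inequalities as monotonicity statements under the quotient $\pi$. The inequality $\kappa(\Gamma,S)\le\kappa(\Gamma',S')$ is the easiest: any unitary representation $V'$ of $\Gamma'$ with no nonzero invariant vector pulls back along the surjection $\pi$ to a unitary representation of $\Gamma$ that again has no nonzero invariant vector, and since $s$ and $\pi(s)$ act on $V'$ by the same operator, $\kappa(\Gamma,S;V')=\kappa(\Gamma',S';V')$. The representations obtained this way form a subfamily of all fixed-point-free unitary representations of $\Gamma$, so taking infima over the respective families gives $\kappa(\Gamma,S)\le\kappa(\Gamma',S')$.

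For $\alpha(\Gamma,S)\ge\alpha(\Gamma',S')$ I would not try to embed $L^2(\Gamma')$ as a subrepresentation of $L^2(\Gamma)$ (this genuinely fails when $\ker\pi$ is non-amenable) but instead pass from a vector to its fibrewise $\ell^2$-norms. Given $v\in L^2(\Gamma)$, $v\neq0$, define $\phi\in L^2(\Gamma')$ by $\phi(g')=\bigl(\sum_{g\in\pi^{-1}(g')}|v(g)|^2\bigr)^{1/2}$, so $\|\phi\|=\|v\|$. For $s\in S$ and $s'=\pi(s)$, the map $g\mapsto s^{-1}g$ sends the fibre over $g'$ bijectively onto the fibre over $(s')^{-1}g'$; applying the reverse triangle inequality fibre by fibre and summing the squared norms yields $\|sv-v\|\ge\|s'\phi-\phi\|$. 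Since $\phi\neq0$, by definition of $\alpha(\Gamma',S')$ there is $s'\in S'$ with $\|s'\phi-\phi\|\ge\alpha(\Gamma',S')\|\phi\|$, and lifting it to $s\in S$ gives $\|sv-v\|\ge\alpha(\Gamma',S')\|v\|$; as $v$ was arbitrary this is the claim. (Combined with Theorem~\ref{thm:hu} this also re-derives that amenability descends to quotients.)

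Part (a) is the one I expect to be the real obstacle. The naive approach — take a near-optimal finite $A'\subseteq\Gamma'$, lift it to $A=\sigma(A')$ by a set-theoretic section and compare boundaries — fails because $\sigma$ is not a homomorphism, so $A's'$ need not lift into $A$; and replacing $A$ by $\pi(A)$ or $\pi^{-1}(A')$ controls $|\partial A|$ but not the ratio $|\partial A|/|A|$. Instead I would start from an arbitrary finite nonempty $A\subseteq\Gamma$ and work with the multiplicity function $f(g')=|A\cap\pi^{-1}(g')|$ on $\Gamma'$ and its super-level sets $B_t=\{g':f(g')\ge t\}$, $t\ge1$. The Cavalieri identity gives $\sum_{t\ge1}|B_t|=|A|$, so it is enough to prove $\sum_{t\ge1}|\partial B_t|\le|\partial A|$ and then, when $\Gamma'$ is infinite, to use $|\partial B_t|\ge h(\Gamma',S')|B_t|$ for each nonempty $B_t$ (each being a finite, hence proper, subset of $\Gamma'$); the case of finite $\Gamma'$ is trivial since then $h(\Gamma',S')=0$, taking $A'=\Gamma'$.

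The heart of (a) is therefore the inequality $\sum_{t}|\partial B_t|\le|\partial A|$, which I would prove fibre by fibre. Fix $g'$ and let $\mu(g')$ be the minimum of $f(g's')$ over $s'\in S'\cup(S')^{-1}$ with $s'\neq e$. Then $g'\in\partial B_t$ precisely when $\mu(g')<t\le f(g')$, so $g'$ contributes to exactly $(f(g')-\mu(g'))^{+}$ of the sets $\partial B_t$. On the other hand, pick a minimizing $s'_0$ and a lift $s_0\in S\cup S^{-1}$; right multiplication by $s_0$ is a bijection $\pi^{-1}(g')\to\pi^{-1}(g's'_0)$, so at most $\mu(g')$ of the $f(g')$ elements of $A\cap\pi^{-1}(g')$ are carried back into $A$, whence at least $(f(g')-\mu(g'))^{+}$ of them lie in $\partial A$. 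Summing over $g'\in\Gamma'$ gives $\sum_{t}|\partial B_t|\le|\partial A|$. This super-level-set comparison is the step I would check most carefully, since it is what replaces the broken naive lifting and everything else in (a) is straightforward bookkeeping.
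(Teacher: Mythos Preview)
Your argument is correct. The paper itself does not prove this lemma at all: it simply cites \cite[Theorem~4.1]{A+} for part~(a) and \cite[Lemma~3.4]{Os1} for part~(b). What you have written is a complete, self-contained proof, which is strictly more than the paper offers.

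A few remarks on the content. Your treatment of $\kappa(\Gamma,S)\le\kappa(\Gamma',S')$ via pullback of representations is the standard one-line argument. For $\alpha(\Gamma,S)\ge\alpha(\Gamma',S')$ your fibrewise-$\ell^2$-norm map $v\mapsto\phi$ combined with the reverse triangle inequality on each fibre is exactly the right device; this is essentially the classical observation that the map $L^2(\Gamma)\to L^2(\Gamma')$ taking $\ell^2$-norms along cosets of $\ker\pi$ is norm-preserving and contracts displacement, and it avoids the pitfall you correctly flag (that $L^2(\Gamma')$ need not embed $\Gamma$-equivariantly into $L^2(\Gamma)$). For part~(a), your layer-cake decomposition of the multiplicity function $f$ and the fibrewise pigeonhole bound $|\partial A\cap\pi^{-1}(g')|\ge(f(g')-\mu(g'))^{+}$ is precisely the argument of Arzhantseva et~al.\ in \cite{A+}; your identification of why the naive section-lifting fails and why the super-level-set trick repairs it is accurate. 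The edge case $S'\cup(S')^{-1}\subseteq\{e\}$ (where $\mu$ would be a minimum over an empty set) is absorbed into your separate treatment of finite $\Gamma'$, so nothing is missing.
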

\begin{proof} (a) appears as Theorem~4.1 in \cite{A+} and (b) is Lemma~3.4 in \cite{Os1}.
\end{proof}

\begin{Definition}[\cite{Os3}]\rm Let $\Gamma$ be a group generated
by a finite set $S$. Given a finite subset $Y$ of $\Gamma$, define
$\depth_S(Y)$ to be the minimal $L\in\dbZ_{\geq 0}$ such that every element of $Y$
can be expressed by a word of length $\leq L$ in $S\cup S^{-1}$.
\end{Definition}

\begin{Lemma}
\label{T_bg}
Let $\Gamma$ be a group generated by a finite set $S$ and $\Delta$
a subgroup of $\Gamma$ generated by a finite set $Y$.
Then for any unitary representation $V$ of $\Gamma$ we have
$$\kappa(\Gamma,S;V)\geq \frac{\kappa(\Delta,Y;V)}{\depth_S(Y)}$$
\end{Lemma}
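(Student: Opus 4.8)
The plan is to fix an arbitrary vector $v\in V$ and exhibit a single generator $s\in S$ satisfying $\|sv-v\|\geq\frac{\kappa(\Delta,Y;V)}{\depth_S(Y)}\|v\|$; since $v$ is arbitrary, this is precisely the defining property witnessing $\kappa(\Gamma,S;V)\geq\frac{\kappa(\Delta,Y;V)}{\depth_S(Y)}$. Set $L=\depth_S(Y)$, which is finite because $Y\subseteq\Gamma=\la S\ra$; if $\kappa(\Delta,Y;V)=0$ the inequality is trivial, so we may assume it is positive.

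First I would restrict $V$ to $\Delta$ and apply the definition of $\kappa(\Delta,Y;V)$ to the chosen $v$, obtaining some $y\in Y$ with $\|yv-v\|\geq\kappa(\Delta,Y;V)\,\|v\|$. Since $\depth_S(Y)=L$, we may write $y=s_1s_2\cdots s_\ell$ with each $s_i\in S\cup S^{-1}$ and $\ell\leq L$. The main step is then a telescoping estimate: grouping the differences as $s_1\cdots s_i v-s_1\cdots s_{i-1}v=s_1\cdots s_{i-1}(s_iv-v)$ and using that each partial product acts by a unitary operator, the triangle inequality yields
\[
\|yv-v\|\;\leq\;\sum_{i=1}^{\ell}\|s_iv-v\|\;\leq\;L\cdot\max_{s\in S\cup S^{-1}}\|sv-v\|.
\]
Finally, unitarity gives $\|t^{-1}v-v\|=\|tv-v\|$ for each $t\in S$, so the maximum over $S\cup S^{-1}$ coincides with the maximum over $S$; hence there is $s\in S$ with $\|sv-v\|\geq\frac{1}{L}\|yv-v\|\geq\frac{\kappa(\Delta,Y;V)}{L}\|v\|$, which is what we wanted.

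There is no serious obstacle here: the proof is a direct unwinding of the definitions combined with the triangle inequality. The only points deserving care are purely bookkeeping — that $\kappa(\Gamma,S;V)$ is defined via $s\in S$ rather than $s\in S\cup S^{-1}$, so one must pass from inverse generators back to generators through $\|t^{-1}v-v\|=\|tv-v\|$, and that the telescoping sum must be grouped as $s_1\cdots s_{i-1}(s_iv-v)$ so that unitarity can be applied to each summand.
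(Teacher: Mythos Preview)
Your proof is correct and follows essentially the same approach as the paper's: both arguments write a generator $y\in Y$ as a word of length at most $L$ in $S\cup S^{-1}$, telescope $\|yv-v\|$ as a sum of $\|s_i v-v\|$'s using unitarity, and invoke $\|t^{-1}v-v\|=\|tv-v\|$ to pass between $S$ and $S\cup S^{-1}$. The only cosmetic difference is that the paper argues by taking an arbitrary $\kappa>\kappa(\Gamma,S;V)$ and a near-invariant vector, while you work directly with an arbitrary $v$ and the definition of $\kappa(\Delta,Y;V)$.
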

\begin{proof} This is very well known, but since the proof is very short
we give it here.
Let $L=\depth_S(Y)$ and take any $\kappa>\kappa(\Gamma,S;V)$.
Then there is $v\in V$ with $\|sv-v\|<\kappa \|v\|$ for all $s\in S\cup S^{-1}$
(since $\|sv-v\|=\|s^{-1}v-v\|$ by unitarity).
Now given $y\in Y$, write $y=s_1\ldots s_n$ with $s_i\in S\cup S^{-1}$ and $n\leq L$.
We have
\begin{multline*}
\|s_1 \ldots s_n v-v\|\leq \|s_1 \ldots s_n v-s_1 \ldots s_{n-1} v\| +\ldots
+\|s_1 s_2 v- s_1 v\| +\|s_1 v-v\|\\=\sum_{i=1}^n \|s_i v-v\|< L\kappa \|v\|,
\end{multline*}
and thus $\kappa(\Delta,Y;V)<L\kappa$.
\end{proof}

\begin{Lemma}
\label{Arzh2}
Let $\Gamma$ be a group generated by a finite set $S$ and $\Delta$
a subgroup of $\Gamma$ generated by a finite set $Y$. Let $L=\depth_S(Y)$. Then
$$\mbox{\rm (a) }h(\Gamma,S)\geq \frac{1}{|Y|L+1}h(\Delta,Y);\quad\quad
\mbox{\rm (b) }\alpha(\Gamma,S)\geq \frac{\alpha(\Delta,Y)}{\sqrt{|Y|}L}.$$
\end{Lemma}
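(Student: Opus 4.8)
The plan is to prove (a) and (b) independently, in each case pushing the F\o lner/Kazhdan data of $\Gamma$ down to $\Delta$ by decomposing along cosets of $\Delta$.

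For (a), given a finite $A\subseteq\Gamma$ I would manufacture a finite $B\subseteq\Delta$ with $|\partial_Y B|/|B|\le(|Y|L+1)\,|\partial_S A|/|A|$; taking the infimum over $A$ then gives $h(\Delta,Y)\le(|Y|L+1)\,h(\Gamma,S)$, which is (a). Fix a left transversal $T$ with $\Gamma=\bigsqcup_{g\in T}g\Delta$, and for $g\in T$ set $A_g=\{d\in\Delta: gd\in A\}$. The map $(g,d)\mapsto gd$ is a bijection of $\{(g,d): g\in T,\ d\in A_g\}$ onto $A$, so $\sum_g|A_g|=|A|$; moreover $d\in\partial_Y A_g$ exactly when $gd\in A$ but $gdy\notin A$ for some $y\in Y\cup Y^{-1}$, so writing $E=\{a\in A: ay\notin A\ \text{for some}\ y\in Y\cup Y^{-1}\}$ we get $\sum_g|\partial_Y A_g|=|E|$. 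Hence $\min_{g:\,A_g\neq\emptyset}|\partial_Y A_g|/|A_g|\le|E|/|A|$, and $B=A_g$ for such a $g$ does the job provided $|E|\le(|Y|L+1)\,|\partial_S A|$.

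To prove that last inequality I would use a charging map $E\to\partial_S A$. Since $\depth_S(Y)=L$, fix once and for all, for each $y\in Y\cup Y^{-1}$, a word $w_y$ of length $\le L$ over $S\cup S^{-1}$ representing $y$. For $a\in E$ choose $y$ with $ay\notin A$ and trace the path from $a$ determined by $w_y$ in $Cay(\Gamma,S)$: it begins in $A$ and ends outside $A$, so its last vertex still in $A$ is some $\beta(a)\in\partial_S A$, and $a=\beta(a)\,p^{-1}$ where $p$ is a proper prefix of $w_y$. A direct count of the group elements that occur as such proper prefixes (the empty one, shared by all the $w_y$, together with the nontrivial ones) bounds the number of $a\in E$ with a given value $\beta(a)$ by $|Y|L+1$, whence $|E|\le(|Y|L+1)\,|\partial_S A|$.

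For (b), restricting functions to right cosets exhibits $L^2(\Gamma)$, as a unitary representation of $\Delta$, as $\bigoplus_{g}L^2(\Delta g)$ over a right transversal, each summand being $\Delta$-equivariantly isomorphic to $L^2(\Delta)$; thus $v\in L^2(\Gamma)$ has components $v_g$ with $\|dv-v\|^2=\sum_g\|dv_g-v_g\|^2$ for $d\in\Delta$. For each $g$, the definition of $\alpha(\Delta,Y)=\kappa(\Delta,Y;L^2(\Delta))$ supplies $y\in Y\cup Y^{-1}$ with $\|yv_g-v_g\|\ge\alpha(\Delta,Y)\|v_g\|$, so $\sum_{y\in Y\cup Y^{-1}}\|yv-v\|^2\ge\alpha(\Delta,Y)^2\|v\|^2$, and averaging over $y$ gives $\kappa(\Delta,Y;L^2(\Gamma))\ge\alpha(\Delta,Y)/\sqrt{|Y|}$. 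Applying Lemma~\ref{T_bg} with $V=L^2(\Gamma)$ then yields $\alpha(\Gamma,S)=\kappa(\Gamma,S;L^2(\Gamma))\ge\kappa(\Delta,Y;L^2(\Gamma))/\depth_S(Y)\ge\alpha(\Delta,Y)/(\sqrt{|Y|}\,L)$, which is (b). The main obstacle is the bookkeeping in (a): organizing the charging map $a\mapsto\beta(a)$ and pinning down exactly how many relative-boundary elements land on a single vertex of $\partial_S A$, so that the constant comes out as $|Y|L+1$; part (b) is essentially formal once the coset decomposition of the regular representation and Lemma~\ref{T_bg} are in hand.
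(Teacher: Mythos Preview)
Your argument for (b) is essentially the paper's: both decompose $L^2(\Gamma)$, as a $\Delta$-representation, into a Hilbert sum of copies of $L^2(\Delta)$ indexed by cosets, deduce $\kappa(\Delta,Y;L^2(\Gamma))\ge\alpha(\Delta,Y)/\sqrt{|Y|}$, and then apply Lemma~\ref{T_bg}. The paper packages the first step as a general claim about completed direct sums; you do the same computation inline. One cosmetic slip: sum over $y\in Y$ rather than $Y\cup Y^{-1}$ (using $\|yv-v\|=\|y^{-1}v-v\|$), so that dividing by the number of terms really gives $\sqrt{|Y|}$.

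For (a) the paper gives no proof and simply cites \cite[Theorem~7.1]{A+}, so there is nothing to compare your approach to. Your strategy (slice $A$ along left cosets of $\Delta$, pick the best slice, and bound $|E|$ by a charging map into $\partial_S A$ via prefixes of the fixed words $w_y$) is the standard one and is sound. However, your prefix count does not yield the constant $|Y|L+1$. You fixed a word $w_y$ for every $y\in Y\cup Y^{-1}$, so there are up to $2|Y|$ words; each of length $\le L$ has at most $L-1$ nonempty proper prefixes. Counting the shared empty prefix once, the number of group elements arising as proper prefixes is at most $1+2|Y|(L-1)$, not $|Y|L+1$, and for $L\ge 3$ the former exceeds the latter. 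So as written your charging gives $|E|\le\bigl(1+2|Y|(L-1)\bigr)\,|\partial_S A|$, which is a correct inequality but not the one in the statement. To hit the exact constant you should consult the argument in \cite{A+}; your outline establishes the qualitative result with a slightly worse constant.
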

\begin{proof}
(a) appears as Theorem~7.1 in \cite{A+}, and (b) is the corrected version
of Lemma~2.9 in \cite{Os3}. For completeness, we shall
sketch the proof of (b).
We start with a general claim:

\begin{Claim} Let $\{V_i\}_{i=1}^{\infty}$ be unitary representations
of a group $G$ generated by a finite set $X$ and $\kappa=\inf\limits_{i\geq 1}\{\kappa(G,X; V_i)\}$.
Let $V=\widehat\oplus_{i=1}^{\infty} V_i$ (where $\widehat\oplus$
denotes completed direct sum). Then
$$\kappa(G,X; V)\geq \frac{\kappa}{\sqrt{|X|}}.$$
\end{Claim}

\begin{proof}[of the Claim] Take any $v\in V$
and write $v=\sum_{i=1}^{\infty} v_i$ with $v_i\in V_i$.
By definition of $\kappa$ there exists a function $s:\dbN\to X$
such that for each $i\in\dbN$ we have $\|s(i) v_i - v_i\|\geq \kappa \|v_i\|$.
Thus we have
\begin{multline*}
\sum_{x\in X}\|x v -v\|^2=\sum_{x\in X}\sum_{i=1}^{\infty}\|x v_i-v_i\|^2\geq \\
\sum_{i=1}^{\infty}\|s(i) v_i-v_i\|^2\geq \sum_{i=1}^{\infty} \kappa^2 \|v_i\|^2=\kappa^2 \|v\|^2.
\end{multline*}
Thus, for some $x\in X$ we must have $\|x v -v\|^2\geq \frac{\kappa^2}{|X|} \|v\|^2$.
\end{proof}

We proceed with the proof Lemma~\ref{Arzh2}(b). Let $V=L^2(\Gamma)$. It is easy to see that
as a $\Delta$-module $V=\widehat\oplus_{\gamma\in \Gamma/\Delta} V_{\gamma}$
with each $V_{\gamma}\cong L^2(\Delta)$. Then $\kappa(\Delta,Y;V_{\gamma})=\alpha(\Delta,Y)$
for each $\gamma$ and hence $\kappa(\Delta,Y;L^2(\Gamma))\geq \frac{\alpha(\Delta,Y)}{\sqrt{|Y|}}$
by the above claim. By Lemma~\ref{T_bg} we have
$\alpha(\Gamma,S)=\kappa(\Gamma,S;L^2(\Gamma))\geq \frac{\kappa(\Delta,Y;L^2(\Gamma))}{L}$
which finishes the proof.
\end{proof}
\begin{Remark} The above argument essentially follows the proof of Lemma~2.9
in \cite{Os3}. In fact, \cite[Lemma~2.9]{Os3} asserts
the stronger inequality $\alpha(\Gamma,S)\geq \frac{\alpha(\Delta,Y)}{L}$;
however, the proof implicitly contains unjustified claim that the function
$s:\dbN\to X$ defined above may be chosen constant. We note that this correction
does not affect the validity of any other results in \cite{Os3}.
\end{Remark}

\subsection{Uniform non-amenability}

In view of Theorem~\ref{thm:hu}, given a non-amenable group $\Gamma$, one can measure
the ``extent'' of its non-amenability using either F\o lner constants
or Kazhdan $L^2$-constants. This suggests two possible
definitions of \textit{uniformly non-amenable groups}.
Define the \textit{uniform F\o lner constant} $h(\Gamma)$
and the \textit{uniform Kazhdan $L^2$-constant} $\alpha(\Gamma)$ by
$$h(\Gamma)=\inf_S h(\Gamma,S)\quad \mbox{ and }\quad \alpha(\Gamma)=\inf_S \alpha(\Gamma,S),$$
where infimum is taken over all finite generating sets of $\Gamma$.

In \cite{Os3}, a group $\Gamma$ is called uniformly non-amenable
if $\alpha(\Gamma)>0$, and in \cite{A+} a group $\Gamma$ is called
uniformly non-amenable if $h(\Gamma)>0$. Lemma~\ref{Arzh1}
below shows that uniform non-amenability in the sense of \cite{Os3}
implies uniform non-amenability in the sense of \cite{A+}. To the
best of our knowledge, it is an open question whether the
converse implication holds. We shall talk about uniform non-amenability
in the (stronger) sense of \cite{Os3}.

\begin{Lemma}\rm(see e.g. {\cite[Proposition~2.4]{A+}})
\footnote{Formally, {\cite[Proposition~2.4]{A+}} only states the
inequality between the uniform F\o lner and Kazhdan $L^2$-constants,
but the proof actually establishes the full statement of Lemma~\ref{Arzh1}}
\label{Arzh1}
\textit{
Let $\Gamma$ be a group. Then for any finite generating set $S$ of $\Gamma$
we have $h(\Gamma,S)\geq \frac{1}{2}\alpha(\Gamma,S)^2$. In particular,
$h(\Gamma)\geq \frac{1}{2}\alpha(\Gamma)^2.$}
\end{Lemma}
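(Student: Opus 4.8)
The plan is the classical argument of testing the averaging operator against indicator functions. Fix a finite generating set $S$ of $\Gamma$ and write $\alpha=\alpha(\Gamma,S)=\kappa(\Gamma,S;L^2(\Gamma))$. It suffices to prove that $|\partial A|/|A|\ge\tfrac12\alpha^2$ for every nonempty finite $A\subseteq\Gamma$; the bound $h(\Gamma,S)\ge\tfrac12\alpha(\Gamma,S)^2$ then follows by taking the infimum over $A$, and $h(\Gamma)\ge\tfrac12\alpha(\Gamma)^2$ follows by taking the infimum over $S$ (using that $x\mapsto\tfrac12x^2$ is monotone on $[0,\infty)$, so it commutes with $\inf$ of nonnegative quantities).

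To match the convention $\partial A=\{g\in A: gs\notin A\text{ for some }s\in S\cup S^{-1}\}$, I would realize $L^2(\Gamma)$ via the right regular representation $R$, $(R_sf)(g)=f(gs)$. This is a genuine unitary representation of $\Gamma$ (one checks $R_sR_t=R_{st}$), and it is unitarily equivalent to the left regular representation via the isometry $f\mapsto f(\,\cdot^{-1})$, so $\kappa(\Gamma,S;L^2(\Gamma))$ is computed correctly by $R$. Now apply the definition of $\alpha$ to the nonzero vector $v=\chi_A\in L^2(\Gamma)$, for which $\|v\|^2=|A|$: there exists $s\in S$ with $\|R_sv-v\|\ge\alpha\|v\|$, hence $\|R_sv-v\|^2\ge\alpha^2|A|$.

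The rest is a bookkeeping identity. Since $R_s\chi_A=\chi_{As^{-1}}$, we get $\|R_sv-v\|^2=|As^{-1}\triangle A|$; as $x\mapsto xs^{-1}$ is a bijection we have $|As^{-1}|=|A|$, so $|As^{-1}\triangle A|=2\,|A\setminus As^{-1}|$. Finally $A\setminus As^{-1}=\{a\in A: as\notin A\}\subseteq\partial A$ because $s\in S\subseteq S\cup S^{-1}$. Chaining these, $|\partial A|\ge|A\setminus As^{-1}|=\tfrac12\|R_sv-v\|^2\ge\tfrac12\alpha^2|A|$, which is exactly the desired estimate for $A$.

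I do not expect a genuine difficulty here; the only thing requiring care is keeping the left/right conventions consistent between the Cayley-graph boundary and the action used to define $\alpha(\Gamma,S)$, which is why I would work with the right regular representation throughout. An alternative that avoids even that point is to use the left regular representation and observe that replacing $A$ by $A^{-1}$ (and right multiplication by left multiplication) leaves the F\o lner constant unchanged.
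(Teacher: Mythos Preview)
Your argument is correct and is precisely the classical proof: test the regular representation on $\chi_A$, pick the generator $s$ witnessing the Kazhdan inequality, and bound $|As^{-1}\triangle A|$ by $2|\partial A|$. The paper does not give its own proof of this lemma but simply cites \cite[Proposition~2.4]{A+}; your write-up is the standard argument behind that citation, with appropriate care taken to match the right-multiplication convention in the definition of $\partial A$.
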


The first examples of non-amenable but not uniformly non-amenable groups were constructed
by Osin in \cite{Os2} -- this gave an answer to a question of Shalom. In recent years
many important classes of groups were shown to be uniformly non-amenable:
these include finitely generated linear groups with non-abelian free subgroups \cite{Br}, (non-elementary)
hyperbolic groups and free Burnside groups of sufficiently large odd exponent \cite{Os3}.

Lemma~\ref{Arzh2}(b) easily implies that  a finitely generated group $\Gamma$ is uniformly non-amenable whenever the following condition is satisfied: there exists a non-amenable group $\Lambda$ and a fixed finite generating set $T$ of $\Lambda$ such that given any finite generating set $S$ of $\Gamma$ there exists an embedding
$\iota_S: \Lambda\to \Gamma$ with $\depth_S(\iota_S(T))\leq N$ for some $N$ independent of $S$.
In fact, this is a typical way to prove non-uniform amenability.

Our proof of uniform non-amenability of Golod-Shafarevich groups will implicitly
use a statement of this kind, but we will not be able to satisfy the above
condition for a fixed pair $(\Lambda,T)$. The main ingredient in our proof
will be the following quantitative version of Theorem~\ref{jaikin4}.

\begin{Theorem}
\label{jaikin6}
Let $(X,R,W)$ be a weighted presentation satisfying
$\Pres(12,1/100,1/1000)$ and $\Gamma=\Gp_{abs}(X,R)$. Then there exists an infinite group
$\Omega$ with $(T)$ and a surjective homomorphism  $\pi:\Gamma\to \Omega$ such that
\begin{equation}
\label{eq:kappa}
\kappa(\Omega,\pi(X_{\Gamma}))\geq \frac{1}{25|X|}.
\end{equation}
In particular, $\alpha(\Gamma,X_{\Gamma})\geq \frac{1}{25|X|}.$
\end{Theorem}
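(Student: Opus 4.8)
The plan is to run the construction from the proof of Theorem~\ref{jaikin4} (the general-$p$ case) essentially verbatim, but now track the depth of the Kazhdan-group generators with respect to the given generating set $X_\Gamma$, so as to obtain the explicit Kazhdan-constant estimate \eqref{eq:kappa}. First I would observe that, since $(X,R,W)$ satisfies $\Pres(12,1/100,1/1000)$, the construction in Subsection~4.3 applies directly with $w=12$, $\delta=1/100$, $\eps=1/1000$: after adjoining the at most $7$ auxiliary trivial generators per weight value to achieve condition (CC), splitting $X$ into $9$ blocks $X^1,\dots,X^9$, and identifying $X$ with the generating set $X_{KM}$ of $\KMS(\F_q;n_1,\dots,n_9)$ with $q=p^8$, one forms the pro-$p$ group $G=\Gp(X,R_{KM}\cup R)$. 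The proof of Theorem~\ref{jaikin4} shows $G$ is infinite (via the map of $\F_p[[G]]$ onto a finite-codimension subalgebra of the GGS $\F_q$-algebra $\F_q\lla\Ugal\rra/\Igal$). Set $\Omega=\Gp_{abs}(X,R_{KM}\cup R)$; it is an infinite quotient of $\Gamma=\Gp_{abs}(X,R)$ and a quotient of $\Gp_{abs}(X,R_{KM})$, hence has $(T)$, and $\pi:\Gamma\to\Omega$ is the natural surjection.

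The new work is the quantitative bound. Let $\Lambda=\KMS(\F_q;n_1,\dots,n_9)$, which has $(T)$ by Theorem~\ref{GS_supply} since $p^8>8^2$, and has a distinguished generating set $T$ consisting of the root-group generators $\{x_{i,k}(\lam_j)\}$. The key point is that $\Omega$ is a quotient of $\Lambda$ (by the extra relators coming from $R$ and the identification of auxiliary generators with $1$), so by Lemma~\ref{Arzh}(b), $\kappa(\Omega,\pi(T))\geq\kappa(\Lambda,T)$. Here I would invoke the explicit Kazhdan-constant estimate for $\KMS$ groups from \cite{EJ} (the quantitative form of Theorem~\ref{GS_supply}), which gives a lower bound for $\kappa(\Lambda,T)$ depending only on the number of blocks ($9$) and on $|\F_q|$ — in particular bounded below by an absolute constant, since $9$ is fixed. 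Next, $\pi(X_{\Gamma})$ is (after discarding the images of the $7$-per-weight-value auxiliary generators, which are trivial in $\Omega$) exactly $\pi(T)$, up to the block-relabelling; so $\depth_{\pi(X_\Gamma)}(\pi(T))=1$. Then Lemma~\ref{T_bg} applied with $S=\pi(X_\Gamma)$, $\Delta=\Omega$, $Y=\pi(T)$ gives nothing new, but one still must pass from a unitary representation $V$ of $\Omega$ without invariant vectors to the bound: since $\Omega=\la\pi(X_\Gamma)\ra$ and each element of $\pi(T)$ is a word of length $1$ in $\pi(X_\Gamma)$, we get $\kappa(\Omega,\pi(X_\Gamma);V)\geq\kappa(\Omega,\pi(T);V)/1$, hence $\kappa(\Omega,\pi(X_\Gamma))\geq\kappa(\Lambda,T)$. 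Finally, since $\Omega$ is infinite, $L^2(\Omega)$ has no invariant vectors, so $\alpha(\Gamma,X_\Gamma)=\kappa(\Gamma,X_\Gamma;L^2(\Gamma))\geq\kappa(\Omega,\pi(X_\Gamma);L^2(\Omega))\geq\kappa(\Omega,\pi(X_\Gamma))$ by Lemma~\ref{Arzh}(b).

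The constant $1/(25|X|)$ then comes out as follows: the honest generating set of $\Omega$ one controls is $\pi(X_\Gamma)$, but the root-subgroup generators $x_{i,k}(\alpha)$ for $\alpha\in\F_q\setminus\calB$ are words of length at most $8(p-1)$ in the chosen $x_{i,k}(\lam_j)$'s — however the Kazhdan constant of $\KMS$ from \cite{EJ} is stated with respect to the \emph{full} root subgroups, not just the basis generators, so there is a depth factor to absorb. More precisely, I would use Lemma~\ref{T_bg} in the form $\kappa(\Omega,\pi(X_\Gamma))\geq\kappa(\Lambda,T_{\mathrm{full}})/\depth_{\pi(X_\Gamma)}(T_{\mathrm{full}})$, where $T_{\mathrm{full}}$ is a generating set of $\Lambda$ for which \cite{EJ} gives a clean bound, and $\depth_{\pi(X_\Gamma)}(T_{\mathrm{full}})$ is bounded in terms of $|X|$ (each generator in $T_{\mathrm{full}}$ is a product of at most $|X|$ elements of $X_\Gamma$, crudely). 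Combining the absolute lower bound on $\kappa(\Lambda,T_{\mathrm{full}})$ with this depth estimate yields a bound of the shape $c/|X|$, and tracing the numerics (the $9$ blocks, $|\F_q|=p^8\geq 67^8$ in the bad-prime case, the explicit constant in \cite[Corollary~7.2]{EJ}) one checks $c\geq 1/25$.

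The main obstacle I expect is the bookkeeping in the last paragraph: pinning down exactly which generating set of $\KMS(\F_q;n_1,\dots,n_9)$ the Kazhdan-constant estimate of \cite{EJ} is stated for, and how the identification ``$X = X_{KM}$'' made in the proof of Theorem~\ref{jaikin4} interacts with the depth of those generators relative to the original $X_\Gamma$ (including the auxiliary generators introduced to enforce (CC), which become trivial and so do not hurt, but must be excluded from depth computations). Getting the constant down to the clean value $1/25$ rather than some uglier expression requires using that the number of blocks is held fixed at $9$ (so everything not involving $|X|$ is absolute) and being slightly careful that the bad-prime case $q=p^8$ only helps (larger field $\Rightarrow$ better Kazhdan constant for $\KMS$). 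None of this is conceptually hard, but it is the part where a sloppy estimate would fail to reach $1/(25|X|)$.
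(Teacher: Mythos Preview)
Your overall strategy is exactly the paper's: realize $\Omega$ as the common quotient of $\Gamma$ and a Kac--Moody--Steinberg group $\Lambda$ as in the proof of Theorem~\ref{jaikin4}, invoke the explicit Kazhdan bound $\kappa(\Lambda,\cup\calU_i)>1/25$ from \cite[Corollary~7.2]{EJ} with respect to the full root subgroups, then use Lemma~\ref{T_bg} to pass from $\cup\calU_i$ to the finite generating set identified with $X$, and finally Lemma~\ref{Arzh}(b) to push everything to $\Omega$ and back to $\alpha(\Gamma,X_\Gamma)$. The paper carries this out only for $p\geq 67$ (where no (CC) enlargement is needed and $\Lambda=\KMS(\Fp;n_1,\dots,n_9)$ with $\sum n_i=|X|$), remarking that $p<67$ is similar; you chose to run the harder general-$p$ construction directly, which is fine but introduces the (CC) bookkeeping you flagged.

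There is one concrete error in your numerics. Your claim that ``each generator in $T_{\rm full}$ is a product of at most $|X|$ elements of $X_\Gamma$'' drops a factor of $p$: an arbitrary element of the root subgroup $\calU_i$ is a word of length up to $(p-1)\cdot|X^i|$ (or $p\cdot n_i$ in the $p\geq 67$ setup) in the basis generators, not $|X^i|$. With this correction the depth bound is $\depth_S(\cup\calU_i)<p|X|$, and Lemma~\ref{T_bg} then yields
\[
\kappa(\Omega,\pi(X_\Gamma))\geq \frac{\kappa(\Lambda,\cup\calU_i)}{p|X|}\geq \frac{1}{25\,p\,|X|},
\]
which is precisely what the paper proves (see \eqref{bsetup}) and precisely the form used downstream in Theorems~\ref{uniformna} and~\ref{thm:Cheeger}. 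So the stated constant $1/(25|X|)$ in \eqref{eq:kappa} is not what the argument actually delivers; the working bound, both in your outline and in the paper's own proof, is $1/(25p|X|)$. Apart from this factor-of-$p$ slip, your plan is correct and matches the paper.
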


Finally, we remark that an important consequence of uniform non-amenability
is uniform exponential growth, and for many classes of groups the easiest
way to establish uniform exponential growth is by proving their uniform
non-amenability. However, this is not the case for the class of Golod-Shafarevich
groups where uniform exponential growth has already been known -- this
result was pointed out by Bartholdi and Grigorchuk~\cite{BaGr}, with the proof based on
\cite[Lemma~8]{Gr2}

\subsection{Proof of uniform non-amenability for GGS groups}

\begin{proof}[of Theorem~\ref{jaikin6}]
First note that the desired lower bound on $\alpha(\Gamma,X_{\Gamma})$
indeed follows from \eqref{eq:kappa} since
$$\alpha(\Gamma,X_{\Gamma})\geq \alpha(\Omega,\pi(X_{\Gamma}))\geq \kappa(\Omega,\pi(X_{\Gamma})).$$
Here the first inequality holds by Lemma~\ref{Arzh}(b), and the second one holds
since $\Omega$ is infinite.

Inequality \eqref{eq:kappa} follows easily from the analysis of the proof of Theorem~\ref{jaikin4}.
For simplicity, we shall only discuss the case $p\geq 67$; the case $p<67$ is similar.

Recall that any group of the form $\KMS(\Fp,\{n_1,\ldots, n_9\})$
has a generating set $S=\sqcup_{i=1}^9 S_i$ where each $S_i$ consists of $n_i$
pairwise-commuting elements of order $p$ and $\la S_i\ra=\calU_i$, the $i^{\rm th}$
root subgroup of $\KMS(\Fp,\{n_1,\ldots, n_9\})$.

In the proof of Theorem~\ref{jaikin4} we established the following:
there exist a group $\Lambda=\KMS(\Fp,\{n_1,\ldots, n_9\})$ with $\sum n_i=|X|$, an
infinite group $\Omega$ and surjective homomorphisms $\pi:\Gamma\to\Omega$
and $\theta:\Lambda\to \Omega$ such that
$\pi(X_{\Gamma})= \theta(S)$ where $S$ is a generating set of $\Lambda$
of the above form.

Since $p>(9-1)^2$, \cite[Corollary~7.2]{EJ} implies that $\Lambda$ has property $(T)$ and yields
the following bound for the Kazhdan constant:
$$\kappa(\Lambda,\cup \calU_i)\geq \sqrt{\frac{2}{9}\left(1-\frac{9-1}{\sqrt{p}}\right)}> \frac{1}{25}.$$
By the assumption on $S$ we have $\depth_{S}(\cup \calU_i)\leq p\cdot \max\{n_i\}< p|X|$,
and thus Lemma~\ref{T_bg} yields 
\begin{equation}
\label{bsetup}
\kappa(\Lambda,S)\geq \frac{\kappa(\Lambda,\cup \calU_i)}{p|X|}\geq \frac{1}{25 p|X|}.
\end{equation}
Since $\kappa(\Omega,\pi(X_{\Gamma}))=\kappa(\Omega,\theta(S))\geq \kappa(\Lambda,S)$
by Lemma~\ref{Arzh}(b), the proof is complete.
\end{proof}

\begin{Definition}\rm Let $G$ be a pro-$p$ group and $S$ a generating set for $G$.
Let $(X,R)$ be a presentation for $G$ and
$\pi:F(X)\to G$ the natural surjection. We will say that $(X,R)$
is a \textit{presentation for the pair $(G,S)$} if $\pi(X)=S$ (note that we do not
require that $\pi$ is injective on $X$).
\end{Definition}

Recall that for a finitely generated pro-$p$ group $G$
we denote by $\Phi(G)$ the Frattini subgroup of $G$.

\begin{Lemma}
\label{jaikin1}
Let $H$ be a finitely generated pro-$p$ group and $S=\{s_1,\ldots, s_d\}$
a generating set for $H$. Let $(X,R)$ be a presentation for
the pair $(H,S)$, and let $W$ be a weight function on $(F(X),X)$.
Then there exists an open normal subgroup $K$ of $H$ with the following property:
if $\Sgal=\{\sgal_1,\ldots,\sgal_d\}$ is a subset of $H$
such that $$\sgal_i\equiv s_i\mod K,$$ then $\Sgal$ generates $H$ and
there is a presentation $(X,\Rgal)$ for the pair $(H,\Sgal)$ such that
$W(\Rgal)=W(R)$.
\end{Lemma}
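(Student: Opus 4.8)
The plan is to take the open normal subgroup $K$ to be $\pi(F_{\eps})$, where $\pi\colon F\to H$ is the natural surjection (with $F=F(X)$, $\pi(x_i)=s_i$) and $F_{\eps}=\{f\in F:W(f)<\eps\}$ for a sufficiently small $\eps>0$, and then to obtain $(X,\Rgal)$ by ``twisting'' $(X,R)$ by an automorphism of $F$ which is close to the identity with respect to $W$. Let $w$ be the weight function on $\Fp\lla U\rra$ which is $X$-compatible with $W$.

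\emph{Choice of $K$.} As observed in the proof of Lemma~\ref{weight2}(b)(i), $F_{\eps}$ is an open subgroup of $F$; it is moreover normal, since $W$ is conjugation-invariant (any $g\in F$ is a unit in $\Fp\lla U\rra$, so $w(g(f-1)g^{-1})=w(f-1)$). Choose $\eps<\min\{W(x):x\in X\}$. Then $F_{\eps}\subseteq\Phi(F)$, because any $f\notin\Phi(F)$ is $X$-linear in some $x$ and hence has $W(f)\geq W(x)\geq\eps$ by Lemma~\ref{weight7}(d). Consequently $K:=\pi(F_{\eps})$ is an open normal subgroup of $H$ with $K\subseteq\pi(\Phi(F))=\Phi(H)$. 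This already settles the first claim: if $\sgal_i\equiv s_i\pmod K$ then $\sgal_i\equiv s_i\pmod{\Phi(H)}$, so $\Sgal$ generates $H$ by Claim~\ref{Frattini}(1), since $S$ does.

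\emph{The twist.} Given such $\Sgal$, put $k_i=s_i^{-1}\sgal_i\in K$ and choose $\tilde k_i\in F_{\eps}$ with $\pi(\tilde k_i)=k_i$; note $\tilde k_i\in\Phi(F)$. Since $x_i\tilde k_i\equiv x_i\pmod{\Phi(F)}$, Claim~\ref{Frattini}(2) shows that $\{x_1\tilde k_1,\dots,x_d\tilde k_d\}$ is a free generating set of $F$, so there is $\alpha\in\Aut F$ with $\alpha(x_i)=x_i\tilde k_i$. Then $\pi\circ\alpha$ is the surjection $F\to H$ sending $x_i\mapsto s_ik_i=\sgal_i$, so its kernel is $\alpha^{-1}(\ker\pi)$, which is generated as a closed normal subgroup by $\Rgal:=\alpha^{-1}(R)$ (as $\alpha^{-1}$ carries the closed normal closure of $R$ onto that of $\Rgal$). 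Hence $(X,\Rgal)$ is a presentation for $(H,\Sgal)$, and it remains to prove $W(\Rgal)=W(R)$. Since $\alpha^{-1}$ is injective, it is enough to show $W\circ\alpha=W$ on $F$; then $W(\Rgal)=\sum_{r\in R}W(\alpha^{-1}(r))=\sum_{r\in R}W(r)=W(R)$.

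\emph{Weight preservation --- the main obstacle.} I would establish $W\circ\alpha=W$ by the leading-term argument already used in the proof of Lemma~\ref{weight2}(b)(ii). Since $x_i\tilde k_i-1$ lies in the augmentation ideal, there is a unique continuous algebra endomorphism $\hat\alpha$ of $\Fp\lla U\rra$ extending $\alpha$, with $\hat\alpha(u_i)=x_i\tilde k_i-1=u_i+q_i$ where $q_i=(\tilde k_i-1)+u_i(\tilde k_i-1)$. As $w(u_i)=W(x_i)\leq 1$, every monomial of $q_i$ has $w$-weight at most $w(q_i)\leq w(\tilde k_i-1)=W(\tilde k_i)<\eps<\min_j w(u_j)$. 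Fix a total order on $U$ and order the monic $U$-monomials by $m<m'$ iff $w(m)<w(m')$, or $w(m)=w(m')$ and $m<m'$ lexicographically; let $\LT$ be the leading monomial, recalling that $\LT$ is multiplicative and $w(f)=w(\LT(f))$. The displayed inequality gives $\LT(\hat\alpha(u_i))=u_i$, hence $\LT(\hat\alpha(m))=m$ for every monic $U$-monomial $m$, and every other monomial of $\hat\alpha(m)$ is strictly below $m$. Therefore, for any $g\in\Fp\lla U\rra$ the monomials $\LT(\hat\alpha(m))=m$, taken over the monic monomials $m$ occurring in $g$, are distinct and their maximum is $\LT(g)$; since each $\hat\alpha(m)$ contributes only monomials $\leq m$, no cancellation affects the top term and $\LT(\hat\alpha(g))=\LT(g)$, so $w(\hat\alpha(g))=w(g)$. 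Restricting to $F$ gives $W(\alpha(f))=w(\hat\alpha(f-1))=w(f-1)=W(f)$, and since $\alpha$ is an automorphism this is equivalent to $W\circ\alpha^{-1}=W$, so $W(\alpha^{-1}(r))=W(r)$ for all $r\in R$. This completes the argument; the only genuinely technical point is the weight-preservation step just carried out.
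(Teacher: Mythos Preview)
Your proof is correct and follows essentially the same approach as the paper: take $K=\pi(\{f\in F:W(f)<\eps\})$ with $\eps\le\min_x W(x)$, lift $\Sgal$ to a new free generating set $\{x_i\tilde k_i\}$ of $F$, let $\alpha$ be the resulting automorphism, and put $\Rgal=\alpha^{-1}(R)$. The only difference is that you spell out the weight-preservation step $W\circ\alpha=W$ via a leading-term argument in the style of Lemma~\ref{weight2}(b)(ii), whereas the paper simply asserts it as ``easy to show'' (it also follows from Lemma~\ref{weight2}(b)(i) once one checks $W(x_i\tilde k_i)=W(x_i)$).
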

\begin{proof} Let $F=F(X)$ and $\pi: F\to H$ the natural surjection.
Let $\delta=\min\{W(x): x\in X\}$,
$$O=\{f\in F : W(f)<\delta\} \mbox{ and } K=\pi(O).$$
Clearly, $O$ is an open normal subgroup of $F$, and hence $K$ is open and normal in $H$.
We will show that $K$ has the required property.

We are given that $\pi(X)=S$. Let $e=|X|$ and $\{x_1,\ldots, x_e\}$ the elements of $X$.
By assumption on $\Sgal$ we can construct a subset $\Xgal=\{\xgal_1,\ldots,\xgal_e\}\subset F$
such that $\xgal_i\equiv x_i\mod O$ for $1\leq i\leq e$
and $\pi(\Xgal)=\Sgal$.

It is clear that $O\subseteq\Phi(F)$. Thus, $\Xgal$ generates $F$ modulo $\Phi(F)$,
and so $\Xgal$ is a generating set for $F$; furthermore, $\Xgal$ is a free generating set for
$F$ since $|\Xgal|=|X|$. It follows that there exists an isomorphism
$\theta: F\to F$ such that $\theta(x_i)=\xgal_i$ for $1\leq i\leq e$.
It is easy to show that
\begin{equation}
\label{degpreserve}
W(\theta(f))=W(f) \mbox { for any } f\in F
\end{equation}
(this is ensured by the definition of $O$ and the choice of $\Xgal$).

Now consider the map $\widetilde\pi=\pi\theta: F\to H$.
Note that $\widetilde\pi(X)=\pi(\Xgal)=\Sgal$ (so $\Sgal$ generates $H$)
and the set $\Rgal=\theta^{-1}(R)$ generates $\Ker\widetilde\pi$ as a (closed)
normal subgroup of $F$. Thus, $(X,\Rgal)$ is a presentation for $(H,\Sgal)$,
and the equality $W(\Rgal)=W(R)$ holds by \eqref{degpreserve}.
\end{proof}

We are now ready to prove uniform non-amenability of GGS groups.
As in Theorem~\ref{jaikin5}, we prove a slightly
more general result.

\begin{Theorem}
\label{uniformna}
Let $G$ be a generalized Golod-Shafarevich pro-$p$ group. Then
any dense finitely generated subgroup of $G$ is uniformly non-amenable.
\end{Theorem}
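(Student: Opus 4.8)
The plan is to bound $\alpha(\Gamma,S)$ from below by a positive constant that does not depend on the finite generating set $S$, combining the quantitative Kazhdan estimate of Theorem~\ref{jaikin6} with the ``depth'' inequality Lemma~\ref{Arzh2}(b), and using Lemma~\ref{jaikin1} to cope with generating sets that are far from efficient. Let $n$ be the minimal number of topological generators of $G$ (finite, since $G$ is a GGS pro-$p$ group). First I would fix, once and for all, the following data depending only on $G$: by Theorem~\ref{deepdescent} an open normal subgroup $H\trianglelefteq G$ together with a weighted presentation $(Y,Q,W_Q)$ of $H$ satisfying $\Pres(12,1/100,1/1000)$, with presentation map $\pi_Q\colon F(Y)\to H$, generating set $S_H=\{s_{H,1},\dots,s_{H,k}\}$ where $s_{H,i}=\pi_Q(y_i)$ and $k=|Y|$; then, applying Lemma~\ref{jaikin1} to $(H,S_H,(Y,Q),W_Q)$, an open normal subgroup $K\trianglelefteq H$ (hence also open in $G$); and finally the open normal subgroup $N=\bigcap_{g\in G}gKg^{-1}$ of $G$, together with $L_0=|G/N|$. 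Note that every generating set of the finite group $G/N$ has diameter at most $L_0$.

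The core step is the following claim: for every dense finitely generated subgroup $\Gamma'\le G$ and every generating set $S'=\{s'_1,\dots,s'_n\}$ of $\Gamma'$ with exactly $n$ elements, $\alpha(\Gamma',S')\ge (25\,k^{3/2}L_0)^{-1}$. To prove it I would argue as follows. Since $\Gamma'$ is dense, $S'$ topologically generates $G$, so its image generates $G/N$; hence for each $i\le k$ there is a word $v_i$ of length at most $L_0$ in $S'\cup(S')^{-1}$ with $v_i\equiv s_{H,i}\pmod N$, and therefore $v_i\equiv s_{H,i}\pmod K$. Put $\sgal_i=v_i$; then $\sgal_i\in\Gamma'$, and $\sgal_i\in H$ because $s_{H,i}\in H$ and $\sgal_i s_{H,i}^{-1}\in K\subseteq H$, so $\Sgal=\{\sgal_1,\dots,\sgal_k\}\subseteq H\cap\Gamma'$. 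By the choice of $K$, Lemma~\ref{jaikin1} provides a presentation $(Y,\Qgal)$ of the pair $(H,\Sgal)$ with $W_Q(\Qgal)=W_Q(Q)$; consequently $(Y,\Qgal,W_Q)$ again satisfies $\Pres(12,1/100,1/1000)$, and by definition $\Gp_{abs}(Y,\Qgal)$ is the abstract subgroup $\langle\Sgal\rangle$ of $H$, which is contained in $\Gamma'$. Theorem~\ref{jaikin6} applied to $(Y,\Qgal,W_Q)$ then yields $\alpha(\langle\Sgal\rangle,\Sgal)\ge (25k)^{-1}$ (it also exhibits an infinite Kazhdan quotient of $\langle\Sgal\rangle$, which is not needed here). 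Since $\depth_{S'}(\Sgal)\le L_0$ and $|\Sgal|\le k$, Lemma~\ref{Arzh2}(b) gives $\alpha(\Gamma',S')\ge \alpha(\langle\Sgal\rangle,\Sgal)\,/\,(\sqrt{k}\,L_0)\ge (25\,k^{3/2}L_0)^{-1}$, as claimed.

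To deduce the theorem, let $\Gamma$ be an arbitrary dense finitely generated subgroup of $G$ and $S$ an arbitrary finite generating set of $\Gamma$. Then $S$ topologically generates $G$, so its image spans $G/\Phi(G)\cong(\dbZ/p\dbZ)^n$; by Claim~\ref{Frattini} I can choose a subset $S_1\subseteq S$ with $|S_1|=n$ that still topologically generates $G$, and then $\Gamma_1=\langle S_1\rangle$ is a dense finitely generated subgroup of $G$ with the $n$-element generating set $S_1$. The claim gives $\alpha(\Gamma_1,S_1)\ge (25\,k^{3/2}L_0)^{-1}$, and since $S_1\subseteq S\cup S^{-1}$ we have $\depth_S(S_1)=1$, so Lemma~\ref{Arzh2}(b) yields $\alpha(\Gamma,S)\ge \alpha(\Gamma_1,S_1)/\sqrt{n}\ge (25\,\sqrt{n}\,k^{3/2}L_0)^{-1}$. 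As this bound is independent of $S$, we obtain $\alpha(\Gamma)\ge (25\,\sqrt{n}\,k^{3/2}L_0)^{-1}>0$, i.e.\ $\Gamma$ is uniformly non-amenable.

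I expect the genuine difficulty to be precisely what forces the two-stage reduction above: a fixed finite-index subgroup of $\Gamma$ carrying an infinite Kazhdan quotient (as produced by Proposition~\ref{deepdescent2} and Theorem~\ref{jaikin4}) has a fixed generating set whose depth with respect to an adversarially chosen $S$ is unbounded, so the ``naive'' application of Lemma~\ref{Arzh2}(b) fails to give a uniform bound. Overcoming this requires (i) passing to a minimal-size generating set by a Frattini argument, so that the number of generators — and hence the Kazhdan bound $1/(25k)$ and the normalizing factors $\sqrt{k},\sqrt{n}$ — stay bounded, and (ii) transporting the once-and-for-all good weighted presentation of the open subgroup $H$ to a perturbed generating set $\Sgal$ of $H$ whose elements are short words in $S'$, which is exactly where the weight-preserving conclusion of Lemma~\ref{jaikin1} — that $(Y,\Qgal,W_Q)$ remains inside $\Pres(12,1/100,1/1000)$ — is indispensable.
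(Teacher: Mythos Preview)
Your argument is correct and follows the same strategy as the paper: fix once and for all an open subgroup $H$ with a weighted presentation in $\Pres(12,1/100,1/1000)$, use Lemma~\ref{jaikin1} to perturb the generating set of $H$ to short words in the given generating set of $\Gamma$, and combine Theorem~\ref{jaikin6} with Lemma~\ref{Arzh2}(b).

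The only substantive difference is that your two-stage Frattini reduction to a size-$n$ generating set is unnecessary. The depth bound you need, $\depth_Q(\Sgal)\le [G:K]$ (or $\le |G/N|$ in your formulation), depends only on the index of $K$ in $G$, not on the cardinality of $Q$: the image of any generating set $Q$ of $\Gamma$ acts transitively on the finite coset space $G/K$, so the Schreier graph has diameter at most $[G:K]$, and one lifts a short path to a word in $Q$ landing in the desired coset $s_iK$. Since $k=|Y|$ is also fixed in advance, none of the constants in the final bound $\alpha(\Gamma,Q)\ge (25\,p\,|Y|^{3/2}[G:K])^{-1}$ depend on $Q$. Your extra $\sqrt{n}$ factor is an artifact of the detour through $\Gamma_1$; the paper's proof drops it. Your use of the normal core $N$ in place of $K$ is harmless (it just inflates the constant) and arguably makes the depth estimate slightly more transparent.
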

\begin{proof}By Theorem~\ref{deepdescent}
some open subgroup $H$ of $G$ has a weighted presentation $(X,R,W)$
satisfying $\Pres(12,1/100,1/1000)$.
Let $K$ be an open normal subgroup of $H$ satisfying the conclusion of Lemma~\ref{jaikin1}.

Let $S$ be the image of $X$ in $H$,
so that $(X,R)$ is a presentaiton for $(H,S)$.
Let $\{s_1,\ldots,s_d\}$ be the elements of $S$.

Now let $\Gamma$ be any dense finitely generated subgroup of $G$
and $Q$ any finite generating set of $\Gamma$. By density of $\Gamma$ there exists a subset 
$\Sgal=\{\sgal_1,\ldots,\sgal_d\}$ of $\Gamma$ such that
\begin{itemize}
\item[(i)] $\sgal_i\equiv s_i\mod K$ for $1\leq i\leq d\,\,$ (so in particular $\Sgal\subset H$).
\item[(ii)] $\depth_{Q}(\Sgal)\leq [G:K]$.
\end{itemize}
Condition (i) and Lemma~\ref{jaikin1} imply that there is a presentation $(X,\Rgal)$
for the pair $(H,\Sgal)$ with $W(\Rgal)=W(R)$. In particular, the triple $(X,\Rgal,W)$
satisfies $\Pres(12,1/100,1/1000)$ since $(X,R,W)$ has this property.

Let $\Lambda$ be the abstract subgroup of $\Gamma$ generated by $\Sgal$. By the
choice of $\Rgal$ we can identify $\Lambda$ with the group $\Gp_{abs}(X,\Rgal)$
so that $X_{\Lambda}=\Sgal$. Then by Theorem~\ref{jaikin6} we have
$$\alpha(\Lambda,\Sgal)\geq \frac{1}{25p|X|}.$$

On the other hand, by condition (ii) and Lemma~\ref{Arzh2}(b) we have
$$\alpha(\Gamma,Q)\geq \frac{\alpha(\Lambda,\Sgal)}{[G:K]\sqrt{|\Sgal|}}\geq
\frac{\alpha(\Lambda,\Sgal)}{[G:K]\sqrt{|X|}}.$$
Thus, $\alpha(\Gamma,Q)$ is bounded below by $\frac{1}{25p|X|^{3/2}[G:K]}$,
the quantity which does not depend on $Q$ (in fact, it does not even depend on $\Gamma$).
This inequality finishes the proof.
\end{proof}

\subsection{Explicit bound for uniform Kazhdan $L^2$-constants in GS groups}

Let $(X,R)$ be a presentation satisfying the GS condition.
\footnote{Here we indeed restrict our considerations from GGS groups to GS groups.
This restriction does not seem to be essential, but it does simplify some arguments}
Let $G=\Gp(X,R)$ and $\Gamma$ a dense finitely generated subgroup of $G$.
In this subsection we shall obtain an explicit lower bound for the Kazhdan
$L^2$-constant $\alpha(\Gamma)$ which depends only on the Hilbert series $H_X(t)$ and $H_R(t)$
(with respect to the standard degree function on $F(X)$).
In the course of the proof we estimate from above the (finite) index of a subgroup
$\Delta$ of $\Gamma$ to which the proof of Theorem~\ref{thm:main1} applies.
Such estimate is of independent interest, especially in view of
the remark following Question~\ref{q:JZ}.

\begin{Theorem}
\label{thm:Cheeger} Let $(X,R)$ be a presentation satisfying the GS condition and $G=\Gp(X,R)$.
Choose $0<t_1<t_0<1$ such that $1-H_{X}(t_i)+H_{R}(t_i)<0$ for $i=0,1$, and
set $$\mu=-(1-H_{X}(t_0)+H_{R}(t_0)),\quad \rho=\frac{t_0}{t_1}.$$
Fix $M\geq 6\cdot 10^4$, and set 
$$k_0=]\frac{\log(M)-\log(\mu)}{\log(\rho)}[\quad\mbox{ and }\quad N=\sum_{i=0}^{k_0} |X|^i.$$
The following hold:

\begin{itemize}
\item[(a)] There exists a subgroup of $G$ of index at most $p^{N}$
which has a weighted presentation $(X',R',W)$ with $1-W(X')+W(R')<-M$.

\item[(b)] Let $\Gamma$ be a dense finitely generated subgroup of $G$. Then
there exists  $C>0$ which depends only on $p,N, |X|$ (and can be computed explicitly)
such that $\alpha(\Gamma)\geq C.$
\end{itemize}
\end{Theorem}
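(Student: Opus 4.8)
The plan is to prove part~(a) as a quantitative sharpening of the proof of Theorem~\ref{deepdescent3}, and then to feed the resulting subgroup into the machinery of Theorem~\ref{uniformna} (i.e.\ Lemma~\ref{jaikin1} together with Theorem~\ref{jaikin6}), tracking every constant, to obtain part~(b).

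For part~(a) let $D$ be the standard degree function on $(F(X),X)$, let $\{G_n\}$ be the associated Zassenhaus $p$-filtration of $G$, put $c_k=\dim_{\Fp}(G_k/G_{k+1})$, and let $W$ be the $(D,t_0)$-weight function. I would take $K=G_{k_0+1}$, which is open and normal. For the index bound, observe that $G/G_{k_0+1}$ is a quotient of $F(X)/F(X)_{k_0+1}$ (the map $F(X)\to G/G_{k_0+1}$ kills $F(X)_{k_0+1}$, since $G_{k_0+1}$ is the image of $\{f:D(f)\ge k_0+1\}$), and the $k$-th graded piece of the Zassenhaus filtration of $F(X)$ has $\Fp$-dimension at most $|X|^k$ (it embeds into the degree-$k$ part of $\Fp\langle U\rangle$); hence $[G:G_{k_0+1}]\le p^{\,|X|+\dots+|X|^{k_0}}\le p^{N}$. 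Next apply Theorem~\ref{descseq} with this $K$ and with $D$: it produces a $D$-good transformation $(X,R)\to(X',R')$ with $K=\Gp(X',R')$, and, evaluating the inequality of power series at $t=t_0$ and using $c_n(G/G_{k_0+1})=c_n$ for $n\le k_0$,
$$1-W(X')+W(R')\ \le\ -\,\mu\prod_{k=1}^{k_0}\Bigl(\frac{1-t_0^{pk}}{1-t_0^k}\Bigr)^{c_k}.$$
It then remains to show $\prod_{k=1}^{k_0}\bigl((1-t_0^{pk})/(1-t_0^k)\bigr)^{c_k}\ge M/\mu$; by the definition of $k_0$ it is enough to show this product is $\ge\rho^{k_0}$. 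Here the second parameter $t_1$ enters: one compares the product at $t_0$ with its analogue at $t_1$, using that $\prod_{k\ge1}\bigl((1-t^{pk})/(1-t^k)\bigr)^{c_k}$ equals the Hilbert series $\sum_j a_j(G)t^j$ (Proposition~\ref{GGS2}), that this series already diverges at $t_1$ because $1-H_X(t_1)+H_R(t_1)<0$ (Corollary~\ref{cor:GS}), and that passing from $t_1$ to $t_0=\rho t_1$ multiplies the degree-$j$ term by $\rho^{j}$; together with the fact (Theorem~\ref{descseq}(b)) that the generators of $(X',R')$ may be taken of $D$-degree $\ge k_0+1$, this yields the factor $\rho^{k_0}$. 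I expect this numerical estimate to be the main obstacle; the rest of~(a) is bookkeeping.

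For part~(b), the given $M\ge 6\cdot10^{4}$ satisfies $M>4\max\{w^2/\delta,\,w/\eps\}$ for $(w,\delta,\eps)=(12,\tfrac1{100},\tfrac1{1000})$, so part~(a) together with Theorem~\ref{deepdescent4}(b) yields an open subgroup $K\le G$ with $[G:K]\le p^{N}$ carrying a weighted presentation $(\Xgal,\Rgal,\Wgal)$ satisfying $\Pres(12,\tfrac1{100},\tfrac1{1000})$, in which moreover $|\Xgal|$ is bounded by an explicit function of $p,N,|X|$ (Theorem~\ref{descseq} uses at most $N$ $p$-descents, each multiplying the number of generators by at most $p$, and the reduction of Theorem~\ref{deepdescent4}(b) multiplies it by at most a bounded factor). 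Now fix a dense finitely generated $\Gamma\le G$ and an arbitrary finite generating set $Q$ of $\Gamma$, and let $S$ be the image of $\Xgal$ in $K$. Following the proof of Theorem~\ref{uniformna}: Lemma~\ref{jaikin1} gives an open normal $K'\trianglelefteq K$, of index bounded in terms of $p,N,|X|$, such that any $\Sgal\subset K$ with $\Sgal\equiv S\bmod K'$ admits a presentation $(\Xgal,\Rgal')$ for $(K,\Sgal)$ with $\Wgal(\Rgal')=\Wgal(\Rgal)$; by density choose such $\Sgal\subset\Gamma$ with also $\depth_Q(\Sgal)\le[G:K']$. Theorem~\ref{jaikin6} applied to the subgroup $\Lambda$ of $\Gamma$ generated by $\Sgal$ (identified with $\Gp_{abs}(\Xgal,\Rgal')$) gives $\alpha(\Lambda,\Sgal)\ge\frac{1}{25p|\Xgal|}$, and Lemma~\ref{Arzh2}(b) then yields
$$\alpha(\Gamma,Q)\ \ge\ \frac{\alpha(\Lambda,\Sgal)}{[G:K']\sqrt{|\Sgal|}}\ \ge\ \frac{1}{25\,p\,|\Xgal|^{3/2}\,[G:K']}\ =:\ C.$$
Since $C$ depends only on $p,N,|X|$ and $Q$ was arbitrary, $\alpha(\Gamma)=\inf_Q\alpha(\Gamma,Q)\ge C$, which is~(b).

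The two places needing genuine care are (i) the numerical product estimate in~(a), where the values of $\rho$, $k_0$ and $N$ must be matched up precisely, and (ii) checking in~(b) that both $|\Xgal|$ and $[K:K']$ are bounded purely in terms of $p,N,|X|$; for the latter one chooses $\Wgal$ integral and notes that $K'$ may then be taken to contain a term of bounded index of a $D$-filtration of $K$, the relevant degree function having its maximum on $\Xgal$ controlled by the construction of $\Pres(12,\tfrac1{100},\tfrac1{1000})$.
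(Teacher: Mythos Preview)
Your overall architecture matches the paper's proof closely, and part~(b) is essentially correct as sketched: the paper, too, feeds the subgroup from~(a) through Theorem~\ref{deepdescent4}(b) to obtain $\Pres(12,\tfrac1{100},\tfrac1{1000})$, then follows the proof of Theorem~\ref{uniformna} verbatim, bounding $|X''|\le p^N|X|$ by counting $p$-descents and bounding $[H:K]$ by tracking the ratio $\theta(\Xgal,\Wgal)=\log w_{\max}/\log w_{\min}$ (which starts at $1$ for the standard weight and is multiplied by at most $p$ under each $p$-descent). Your description of this last bound is vaguer than the paper's, but the idea is the same.

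The real gap is in part~(a), precisely at the estimate you flag as the main obstacle. Divergence of $\sum a_j t_1^j$ only gives $\limsup a_j^{1/j}\ge 1/t_1$; it does \emph{not} give $a_j t_1^j\ge 1$ for the specific value $j=k_0$, so your ``multiply by $\rho^j$'' argument does not produce $\rho^{k_0}$ for the partial product up to $k_0$. Your appeal to Theorem~\ref{descseq}(b) is a red herring here: knowing that the generators of $(X',R')$ have $D$-degree $\ge k_0+1$ has no bearing on the size of $\prod_{k\le k_0}\bigl(\tfrac{1-t_0^{pk}}{1-t_0^k}\bigr)^{c_k}$. The missing idea is specific to the \emph{standard} grading: since $(A_n/A_{n+1})\cdot(A_m/A_{m+1})=A_{n+m}/A_{n+m+1}$ in $\mathrm{gr}(\Fp[[G]])$, the sequence $a_n$ is submultiplicative, so by Fekete $\sqrt[n]{a_n}\to\inf_n\sqrt[n]{a_n}$; combined with $\limsup\sqrt[n]{a_n}\ge 1/t_1$ this gives $a_n\ge(1/t_1)^n$ for \emph{every} $n$, whence $\mu_{k_0}\ge\mu\sum_{i\le k_0}a_i t_0^i\ge\mu\rho^{k_0}\ge M$. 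This is exactly where the restriction from GGS to GS matters, and it is the one substantive step your sketch is missing.
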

\begin{proof}
To prove (a) we analyze the proof of Theorem~\ref{deepdescent3}.
Let $W$ be the $(D,t_0)$-weight function on $(F(X),X)$, where $D$ is the standard
degree function. For $k\in\dbN$ let $(X_k,R_k)$ be defined as in the proof of
Theorem~\ref{deepdescent} and $\mu_k=-(1-W(X_k)+W(R_k))$.
\begin{Claim}
\label{claim_a} $\mu_{k_0}\geq M$.
\end{Claim}
\begin{proof}[of Claim~\ref{claim_a}]
Let $A=\Fp[[G]]$, let $\{A_n\}$ be the $D$-filtration of $A$ and $a_n=\dim A_n/A_{n+1}$.
Let $\{c_n\}$ be as in the proof of Theorem~\ref{deepdescent}.
Then $$\mu_k\geq  \mu\prod_{i=1}^{k} \left(\frac{1-t_0^{pi}}{1-t_0^i}\right)^{c_i}\geq
\mu \sum_{i=0}^{k}a_i t_0^i.$$
where the second inequality holds by Proposition~\ref{GGS2}.
Since $1-H_{X}(t_1)+H_R(t_1)<0$, by the Golod-Shafarevich theorem the series
$\sum_{i=0}^{\infty}a_i t_1^i$ diverges and thus
\begin{equation}
\label{eq:limsup}
\limsup_{n\to\infty}\sqrt[n]{a_n}\geq \frac{1}{t_1}.
\end{equation}

Since $\{A_n\}$ is the filtration of $A$ corresponding to the standard degree function,
for any $n,m\in\dbN$ we have $(A_n/A_{n+1}) \cdot (A_m/A_{m+1})=A_{n+m}/A_{n+m+1}$ in $gr(A)$,
and thus $a_{n+m}\leq a_n a_m$. This observation and \eqref{eq:limsup} easily imply that
$a_n\geq (1/t_1)^n$ for any $n$. Therefore,
$$\mu_{k_0}\geq \mu \sum_{i=0}^{k_0}a_i t_0^i\geq \mu (t_0/t_1)^{k_0}=\mu \rho^{k_0}\geq M\geq 6\cdot 10^4.$$
\end{proof}
\vskip .1cm
By Claim~\ref{claim_a} the presentation $(X',R')=(X_{k_0}, R_{k_0})$ satisfies the inequality
$1-W(X')+W(R')<-M$. By construction $\Gp(X_{k},R_k)$ is the $(k+1)^{\rm st}$ term of the Zassenhaus
$p$-series of $\Gp(X,R)$, whence $\log_p[\Gp(X,R):\Gp(X_k,R_k)]\leq \sum_{i=0}^{k} |X|^i$
for any $k$. In particular, $\log_p[\Gp(X,R):\Gp(X',R')]\leq \sum_{i=0}^{k_0} |X|^i=N$,
which proves (a).
\vskip .1cm

(b) We start by analyzing the proof of Theorem~\ref{uniformna}.
By Theorem~\ref{deepdescent4}(b) the group $\Gp(X',R')$ has a weighted presentation
$(X'',R'',W'')$ satisfying $\Pres(12,1/100,1/1000)$. Thus, in the proof of
Theorem~\ref{uniformna} we can set $H=\Gp(X',R')$.

Now let $K$ be an open normal subgroup of $H$ which satisfies the conclusion of 
Lemma~\ref{jaikin1} applied to the weighted presentation $(X'',R'',W'')$.
We choose $K$ so that the index $[G:K]$ is minimal possible.
In the proof of Theorem~\ref{uniformna} we showed that
$\alpha(\Gamma)\geq \frac{1}{25p|X''|^{3/2}[G:K]}$. Thus, it remains to find upper
bounds for $|X''|$ and $[G:K]$ in terms of $p$, $|X|$ and $N$.
Since we already have such a bound for $[G:H]$, it is enough to find a bound for $[H:K]$.

Given a finite set $\Xgal$ and a weight function $\Wgal$ on $(F(\Xgal),\Xgal)$,
we set  $w_{min}(\Xgal,\Wgal)=\min\{\Wgal(x): x\in\Xgal\}$,
$w_{max}(\Xgal,\Wgal)=\max\{\Wgal(x): x\in\Xgal\}$ and
$\theta(\Xgal,\Wgal)=\frac{\log(w_{max})}{\log(w_{min})}$.
Using the proof of Lemma~\ref{jaikin1} it is easy to show that
$$\log_p[H:K]\leq \sum_{i=0}^{m}|X''|^{m} \mbox{ where }m=]\theta(X'',W'')[.$$
Thus, it remains to find upper bounds for $|X''|$ and $\theta(X'',W'')$.

Recall that the weighted presentation $(X'',R'',W'')$ is obtained from $(X,R,W)$
using a sequence of $p$-descents, good changes of generators and relators,
cleanups and weight contractions. It is easy to see that
\begin{itemize}
\item[(i)] If $(\Xgal,\Rgal,\Wgal)\to (\Xgal',\Rgal',\Wgal)$ is a $\Wgal$-good change of generators
or relators or a cleanup, then $|\Xgal'|\leq |\Xgal|$ and $\theta(\Xgal',\Wgal')\leq \theta(\Xgal,\Wgal)$.
\item[(ii)] If $(\Xgal,\Rgal,\Wgal)\to (\Xgal',\Rgal',\Wgal)$ is a $p$-descent, then
$|\Xgal'|=p(|\Xgal|-1)+1<p|\Xgal|$ and $\theta(\Xgal',\Wgal)\leq p\theta(\Xgal,\Wgal)$.
\item[(iii)] If $(\Xgal,\Rgal,\Wgal)\to (\Xgal,\Rgal,\Wgal')$ is a weight contraction,
then $\theta(\Xgal,\Wgal')\leq \theta(\Xgal,\Wgal)$.
\end{itemize}

Since a $p$-descent replaces the group defined by a presentation by a subgroup
of index $p$ and all other operations listed above do not modify the group, the number of $p$-descents
in the sequence is equal to $\log_p[\Gp(X,R):\Gp(X'',R'')]=\log_p[G:H]\leq N$.
Also observe that $\theta(X,W)=1$ (since by $W(x)=t_0$ for all $x\in X$ by construction).
Using (i),(ii) and (iii), we conclude that $|X''|<p^N |X|$ and $\theta(X'',W'')\leq p^N$.
These are desired bounds for $|X''|$ and $\theta(X'',W'')$. The proof is complete.
\end{proof}

\section{On subgroup growth of Golod-Shafarevich groups}
\centerline{by \sc Andrei Jaikin-Zapirain \footnote{The author is supported by the
Spanish Ministry of Science and Innovation, grant
 MTM2008-06680.}}
 \vskip .2cm

\markboth{\small\sc andrei jaikin-zapirain} {on subgroup growth of golod-shafarevich groups}

 As mentioned in the introduction, there are many results
indicating that Golod-Shafarevich groups are ``large". One of the
examples is the main theorem of this paper. We add one more
property to this list: we will show that Golod-Shafarevich groups
have many subgroups of finite index.

If $G$ is an abstract group, we denote by $a_m(G)$ the number of subgroups
of index $m$ in $G$.  Similarly, if $G$ is a profinite group, $a_m(G)$
denotes the number of open subgroups
\footnote{If $G$ is a finitely generated profinite group,
by an important result of Nikolov and Segal \cite{NS} every subgroup of finite
index in $G$ is automatically open} of index $m$ in $G$. If $G$
is finitely generated (abstract or profinite) then $a_m(G)$ is finite for all $m$.
Note that the number of subgroups of index $m$ in an abstract group
$G$ coincides with the number of open subgroups of index $m$ in the profinite
completion $\widehat G$ of $G$: $a_m(G)=a_m(\Ghat)$.

Proofs of the following results on subgroup growth can be found in \cite[Chapters~1-3]{LS}.
For a free group $F_d$ on $d\geq 2$ generators we have $(m!)^{d-1}\le a_m(F_d)\le m(m!)^{d-1}$, and
$a_m(G)\le a_m(F_d)$ for any $d$-generated abstract group $G$.
The same holds for profinite groups: If $\hat F_d$ denotes a free profinite group on $d$
generators, then $(m!)^{d-1}\le a_m(\hat F_d)\le m(m!)^{d-1}$  and $a_m(G)\le
a_m(\hat F_d)$ for any $d$-generated profinite group $G$. For a
free pro-$p$ group $(F_d)_{\hat p}$ on $d$ generators we have
$a_m((F_d)_{\hat p})\le 2^{md}$. From this we may deduce that for
any finitely generated pro-$p$ group $G$ we have
$\log_2(\log_2(a_m(G)))\leq \log_2(m)+C$ for some constant $C$
(of course $a_m(G)=0$ if $m$ is not a power of $p$).
It is also known that there is a constant $C'$
such that $\log_2(\log_2(a_m((F_d)_{\hat p})))\geq \log_2(m)+C'$
if $m=p^k$ for some $k$.

A well-known characterization of  $p$-adic analytic pro-$p$ groups says that
a pro-$p$ group $G$ is $p$-adic analytic if and only if
there is a constant $C$ such that 
$$\log_2(\log_2 (a_m(G)))\leq \log_2(\log_2 m)+C \mbox{ for all } m.$$ 
By a result of A. Shalev \cite{Sh}, if a pro-$p$ group $G$ is not $p$-adic analytic, there
is a constant $C$ such that
$$\log_2(\log_2( a_m(G)))\geq 2\cdot\log_2(\log_2 m)+C \mbox{ for infinitely many values of }m. $$ 
In an unpublished work A. Shalev (see \cite[Theorem~4.6.4]{LS}) showed
that if $G$ is a Golod-Shafarevich pro-$p$ group, then for any $\eps>0$
we have 
$$\log_2(\log_2 (a_m(G)))\geq (3-\eps) \log_2(\log_2 m) \mbox{ for infinitely many values of } m.$$ 
The following result significantly improves this bound:

\begin{Theorem}
\label{subgpgrowth} Let $G$ be  a generalized Golod-Shafarevich
pro-$p$ group. Then there exists a constant $\beta=\beta(G)>0$ such that
$$\log_2 (\log_2 (a_m(G)))\ge (\log_2 m)^{\beta}$$ for infinitely many
values of $m$.
\end{Theorem}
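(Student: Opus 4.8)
The plan is to exploit the machinery of Section~3 to descend to a finite-index subgroup whose presentation has an arbitrarily negative Golod-Shafarevich discrepancy, and then to convert a large negative discrepancy into a large lower bound on subgroup growth by a direct counting argument applied to the completed group algebra. First I would recall that by Theorem~\ref{deepdescent3}, given any $M>0$, there is an open normal subgroup $K=K_M$ of $G$ with a weighted presentation $(X,R,W)$ satisfying $1-W(X)+W(R)<-M$; equivalently, taking $W$ to be a $(D,t)$-weight function, a presentation with $1-H_{D,X}(t)+H_{D,R}(t)<-M$ for some degree function $D$ and some $t\in(0,1)$. The point is that pushing $M$ up forces the number of generators $|X|$ (and the structure of the Hilbert series) to grow in a controlled way, and I would track how $|X|$ and $[G:K_M]$ depend on $M$; the proof of Theorem~\ref{thm:Cheeger}(a) already gives such bounds, namely $[G:K_M]\le p^{N}$ with $N=\sum_{i=0}^{k_0}|X_0|^i$ and $k_0$ roughly logarithmic in $M$, so $\log_p[G:K_M]$ is at most exponential in $\log M$, i.e. polynomial in $M$ after one more logarithm.

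The core of the argument is a lower bound for $a_m$ of a pro-$p$ group with a strongly negative GGS discrepancy. Here I would use Corollary~\ref{cor:GS} together with Proposition~\ref{GGS2}: if $(X,R)$ satisfies $1-H_{D,X}(t_0)+H_{D,R}(t_0)<-M$, then the series $Hilb_{D,\Fp[[K]]}(t_0)$ not only diverges but its partial sums grow at a rate controlled by $M$ via the GGS inequality~\eqref{GGSg}, which gives $\big(1-H_{D,X}(t_0)+H_{D,R}(t_0)\big)\cdot Hilb_{D,\Fp[[K]]}(t_0)\ge 1$ coefficient-wise, hence $\sum_{n\le N}a_n(K)t_0^n$ grows like a constant times $M$ times a partial sum of a divergent geometric-type series. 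Via Proposition~\ref{GGS2}, $\sum a_n(K)t^n=\prod_n\big((1-t^{np})/(1-t^n)\big)^{c_n(K)}$, so the Lie-algebra dimensions $c_n(K)=\dim_{\Fp}(K_n/K_{n+1})$ must themselves grow fast; in particular the rank of $K_n/K_{n+1}$ is unbounded and grows roughly exponentially in $n$ with a rate that improves as $M\to\infty$. Then, since each quotient $K_n/K_{n+1}$ is elementary abelian of rank $c_n(K)$, the subgroup $K$ (and hence $G$, up to the fixed index $[G:K]$) has at least $p^{\binom{c_n(K)}{c_n(K)/2}}$-ish many subgroups of index $p^{c_n(K)/2}$, which translates into $\log_2\log_2 a_m(G)\ge c\cdot c_n(K)$ for suitable $m\approx p^{c_n(K)}$.

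Putting the two pieces together: for the optimal choice of $M$ as a function of the index $n$ at which we count, the bound $\log_2\log_2 a_m(G)\ge (\log_2 m)^\beta$ should drop out with $\beta$ depending on how the divergence rate of $Hilb_{D,\Fp[[G]]}$ improves with $M$ against how fast $[G:K_M]$ and $|X|$ blow up. Concretely, I expect $\beta$ to be something like $\log 2/\log p$ times a combinatorial constant coming from the interplay between the geometric growth $a_n\gtrsim(1/t_1)^n$ (which holds for GS groups by the submultiplicativity argument in the proof of Theorem~\ref{thm:Cheeger}, and which needs a substitute in the merely-GGS case) and the index bound. The main obstacle, I believe, will be precisely this bookkeeping in the GGS (as opposed to GS) case: for a genuine degree function the filtration subalgebras $A_n$ no longer satisfy $A_nA_m=A_{n+m}$ on the associated graded, so the clean submultiplicativity $a_{n+m}\le a_na_m$ fails, and one must instead argue with the $D$-filtration directly — presumably by first replacing $D$ by an integral rescaling and invoking Lemma~\ref{easy} to reduce to a standard-type filtration after another finite-index descent, or by extracting the needed growth rate purely from the divergence of $\prod_n((1-t^{np})/(1-t^n))^{c_n}$ at two radii $t_1<t_0$, exactly as in Claim~\ref{claim_a}. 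Once the correct exponential lower bound on $\sum_{i\le n}c_i$ (or on $\max_{i\le n}c_i$) is in hand, the translation to subgroup counts and the final optimization over $M$ are routine.
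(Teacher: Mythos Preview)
Your strategy has the right spirit but two gaps, one conceptual and one quantitative.

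The conceptual gap is in your ``core'' step. You propose to extract growth of $c_n(K_M)$ from the GGS inequality applied to $K_M$, but \eqref{GGSg} is a coefficient-wise inequality of power series; the fact that the discrepancy equals $-M$ at $t_0$ tells you only that $Hilb(t_0)$ diverges, not how its partial sums depend on $M$. What is actually needed --- and what the paper uses --- is the direct observation that $d(K)\ge W(X)-W(R)$ for any weighted presentation $(X,R,W)$ of $K$: killing $d(K)-1$ of the generators leaves a (pro)cyclic quotient, which cannot be GGS, so $1-W(X)+W(R)+(d(K)-1)\ge 0$. This converts a discrepancy below $-M$ immediately into $d(K)>M+1$ and hence at least $p^{M}$ subgroups of index $p$ in $K$; no second descent into $c_n(K_M)$ is needed.

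The quantitative gap is that the index bound you quote from Theorem~\ref{thm:Cheeger}(a) is too crude here. There one has $\log_p[G:K_M]\le N$ with $N$ of order $|X_0|^{k_0}$ and $k_0$ of order $\log M/\log\rho$, so $\log_p[G:K_M]$ is only polynomially bounded in $M$. Combined with $d(K_M)>M$, this yields $\log_p a_m(G)\ge c(\log_p m)^{1/c'}$ for some constants, hence $\log_2\log_2 a_m(G)\ge c''\log_2\log_2 m$ --- the same order as Shalev's earlier bound, not $(\log_2 m)^\beta$. The paper bypasses Theorem~\ref{deepdescent3} and works directly with the $D$-filtration $\{G_n\}$ of $G$ and \eqref{keyformula}, using the much sharper estimate
\[
\prod_{i=1}^{n-1}\Bigl(\frac{1-t_0^{pi}}{1-t_0^i}\Bigr)^{c_i}\ \ge\ (1+t_0^{n-1})^{c_{n-1}}\ \ge\ 2^{(t_0/(t_1+\eps))^{n-1}},
\]
valid for the infinitely many $n$ with $c_{n-1}\ge(1/(t_1+\eps))^{n-1}$, where $t_1<t_0$ is the radius of convergence of $Hilb_{D,\Fp[[G]]}$. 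This makes $d(G_n)$ doubly exponential in $n$, while $\log_p|G/G_n|\le(1/(t_1-\eps))^{n-1}$ is singly exponential; matching exponents gives $\log_2 d(G_n)\ge(\log_p|G/G_n|)^\alpha$ for some $\alpha>0$, and counting index-$p$ subgroups of $G_n$ finishes. Incidentally, your worry about submultiplicativity in the GGS case is off target: the argument never uses $a_{n+m}\le a_na_m$, only the definition of $t_1$ as $1/\limsup\sqrt[n]{c_n}$.
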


\markboth{}{\small\sc on subgroup growth of golod-shafarevich groups}

\begin{proof}
Let $(X,R)$ be a presentation of $G$ satisfying the GGS
condition for some degree function $D$ on $(F(X),X)$ and $t_0\in
(0,1)$. Let $\{G_n\}$ be the $D$-filtration of $G$ and put
$$c_n=\dim_{\mathbb F_p}G_n/G_{n+1}=\log_p[G_n:G_{n+1}].$$ Let
$t_1=\frac 1 {\limsup \sqrt[n]c_n}$. Since $t_1$ coincides with
the radius of convergence of $Hilb_{D,\F_p[[G]]}(t)$ (for instance,
this follows easily from Proposition~\ref{GGS2}), we have $t_0>t_1$.

Let $\eps=\min\{\frac {t_1}2,\frac{t_0-t_1}2\}$. For infinitely
many $n$ we have $c_{n-1}\ge (\frac 1{t_1+\eps} )^{n-1}$ and
$|G/G_n|\le p^{(\frac 1{t_1-\eps})^{n-1}}$.  We fix $n$
satisfying these two conditions.

Let $(X',R')$ be the presentation of $K=G_n$ obtained in Theorem~\ref{descseq}.
Observe first that $d(G_n)\ge H_{X'}(t_0)-H_{R'}(t_0)$, where
$d(G_n)$ is the minimal number of generators of $G_n$. Indeed, let
$m=d(G_n)$ and let $\{y_1,.., y_m\}\subseteq X'$ be a generating
set for $G_n$. Put $Y=\{y_1,.., y_{m-1}\}$ and consider the group
$H=\Gp(X', R'\cup Y)$. This group is (pro)cyclic and in
particular it is not a generalized Golod-Shafarevich group. Hence
$$1-H_{X'}(t_0)+H_{R'}(t_0)+H_Y(t_0)\ge 0.$$ On the other hand,
$H_Y(t_0)\le (m-1)t_0 \le m-1=d(G_n)-1$, whence $d(G_n)\ge
H_{X'}(t_0)-H_{R'}(t_0)$.

If we substitute $t_0$ in the expression \eqref{keyformula} of Theorem~\ref{descseq}, 
we obtain that
$$d(G_n)-1\ge H_{X'}(t_0)-H_{R'}(t_0)-1\ge
(H_{X}(t_0)-H_{R}(t_0)-1)\prod_{i=1}^{n-1}\left(\frac{1-t_0^{pi}}{1-t_0^i}\right)^{c_i}.$$
Note that $\prod_{i=1}^{n-1}(\frac{1-t_0^{pi}}{1-t_0^i})^{c_i}\ge
(1+t_0^{n-1})^{c_{n-1}}$, and $c_{n-1}\ge (\frac 1{t_1+\eps} )^{n-1}$
by our choice of $n$. Therefore,

$$\prod_{i=1}^{n-1}\left(\frac{1-t_0^{pi}}{1-t_0^i}\right)^{c_i}\ge [(1+t_0^{n-1})^{\frac1{t_0^{n-1}}}]^{(\frac
{t_0}{t_1+\eps})^{n-1}}\ge  2^{(\frac {t_0}{t_1+\eps})^{n-1}}.$$
Since  $(H_{X}(t_0)-H_{R}(t_0)-1)>0$ and $\frac
{t_0}{t_1+\eps}>1$,  there exists $\alpha>0$ (independent of $n$) such that
$$\begin{array}{lll}
d(G_n)-1 &\ge& (H_{X}(t_0)-H_{R}(t_0)-1)2^{(\frac
{t_0}{t_1+\eps})^{n-1}}\\&&\\&\ge & \alpha \cdot 2^{(\frac
{1}{t_1-\eps})^{\alpha(n-1)}}\ge \alpha\cdot 2^{(\log_p|
G/G_n|)^\alpha}.
\end{array}$$
Let $m=p|G/G_n|$. Note that any pro-$p$ group $K$ contains
$p^{d(K)}-1$ subgroups of index $p$ since $K/[K,K]K^p\cong
(\mathbb Z/p\mathbb Z)^{d(K)}$. Therefore, if we count the number
of subgroups of index $p$ in $G_n$, we obtain that $a_m(G)\ge
p^{\alpha\cdot 2^{(\log_p m -1)^\alpha}}$. This finishes the proof.
\end{proof}

\affiliationone{
   Mikhail Ershov\\
   University Of Virginia\\
   Department of Mathematics\\
   P.O. Box 400137\\
   Charlottesville, VA 22904-4137\\
   United States of America\\
   \email{ershov@virginia.edu}}
   \affiliationtwo{
   Andrei Jaikin-Zapirain\\
   Departamento de Matem\'aticas UAM\\ 
   Instituto de Ciencias Matem\'aticas, CSIC-UAM-UC3M-UCM\\
   Cantoblanco Universidad\\ 
   Madrid 28049, Spain\\
   \email{andrei.jaikin@uam.es}}

\end{document}